\newtheorem{theorem}{Theorem}[section]
\newtheorem{proposition}[theorem]{Proposition}
\newtheorem{remark}[theorem]{Remark}
\numberwithin{equation}{section}
\title{On dual Schur domain decomposition method for linear first-order transient problems}
\author{K.~B.~Nakshatrala} 
\address{Correspondence to: Dr.~Kalyana Babu Nakshatrala, Department of 
Mechanical Engineering, 216 Engineering/Physics Building, Texas A\&M 
University, College Station, Texas-77843, USA. TEL:+1-979-845-1292}
\email{knakshatrala@tamu.edu} 
\author{A.~Prakash} 
\address{Dr. Arun Prakash, School of Civil Engineering, 550 Stadium Mall 
Drive, Room \#4119, Purdue University, West Lafayette, Indiana-47907, USA. 
TEL:+1-765-494-6696} \email{arunprakash@purdue.edu} 
\author{K.~D.~Hjelmstad} 
\address{Dr. Keith D. Hjelmstad, University Vice President \& Dean, Arizona 
State University, Mesa, Arizona, USA.}  
\email{keith.hjelmstad@asu.edu}
\begin{document}

\keywords{dual Schur domain decomposition method; coupling algorithms; 
  differential-algebraic equations; generalized trapezoidal family}

\begin{abstract}
  This paper addresses some numerical and theoretical aspects of dual Schur domain decomposition 
  methods for linear first-order transient partial differential equations. The spatially discrete 
  system of equations resulting from a dual Schur domain decomposition method can be expressed as 
  a system of differential algebraic equations (DAEs). In this work, we consider the trapezoidal 
  family of schemes for integrating the ordinary differential equations (ODEs) for each subdomain 
  and present four different coupling methods, corresponding to different algebraic constraints, 
  for enforcing kinematic continuity on the interface between the subdomains. Unlike the continuous 
  formulation, the discretized formulation of the transient problem is unable to enforce simultaneously 
  the continuity of both the primary variable and its rate along the subdomain interface (except for 
  the backward Euler method). 
  
  Method $1$ ($\boldsymbol{d}$-continuity) is based on the conventional approach using continuity 
  of the primary variable and we show that this method is unstable for a lot of commonly used time 
  integrators including the mid-point rule. To alleviate this difficulty, we propose a new Method 
  $2$ (Modified $\boldsymbol{d}$-continuity) and prove its stability for coupling all time integrators 
  in the trapezoidal family (except the forward Euler). Method $3$ ($\boldsymbol{v}$-continuity) is 
  based on enforcing the continuity of the time derivative of the primary variable. However, this 
  constraint introduces a drift in the primary variable on the interface. We present Method $4$ 
  (Baumgarte stabilized) which uses Baumgarte stabilization to limit this drift and we derive bounds 
  for the stabilization parameter to ensure stability. 
  Our stability analysis is based on the ``energy'' method, and one of the main contributions 
  of this paper is the extension of the energy method (which was previously introduced in the 
  context of numerical methods for ODEs) to assess the stability of numerical formulations for 
  index-$2$ differential-algebraic equations (DAEs). Finally, we present numerical examples to 
  corroborate our theoretical predictions. 
\end{abstract}
\maketitle


\section{INTRODUCTION}
Solving partial differential equations using domain decomposition techniques has been an active area of research 
for quite some time \cite{Quarteroni_Valli_DD,Smith_DD,Toselli_DD}. Much of this research has addressed static 
problems \cite{Farhat_Roux_IJNME_1991_v32_p1205,Fragakis_Papadrakakis_CMAME_2003_v192_p3799,Park_CompMech_2000_v24_p476}, 
designing preconditioners \cite{Rixen_Farhat_IJNME_1999_v44_p489,Klawonn_Widlund_CAPM_2001_v54_p57}, parallel implementation 
\cite{Farhat_Roux_IJNME_1991_v32_p1205,Farhat_Chen_Mandel_IJNME_1995_v38_p3831}, efficient computer 
implementation \cite{Farhat_Roux_IJNME_1991_v32_p1205,Gravouil_Combescure_IJNME_2001_v50_p199,
  Prakash_Hjelmstad_IJNME_2004_v61_p2183}, solvers \cite{Rixen_Farhat_Tezaur_Mandel_IJNME_1999_v46_p501}, 
etc. On the other hand, domain decomposition techniques for transient problems are not as mature as 
those associated with static problems. Some of the representative papers on domain decomposition methods 
for transient problems are \cite{Park_JAM_1980_v47_p370,Farhat_Crivelli_Roux_IJNME_1994_v37_p1945,Farhat_Chen_Mandel_IJNME_1995_v38_p3831,
  Farhat_Crivelli_Geradin_CMAME_1995_v125_p71,Gravouil_Combescure_IJNME_2001_v50_p199,
  Combescure_Gravouil_CMAME_2002_v191_p1129,Prakash_Hjelmstad_IJNME_2004_v61_p2183}. 
A study of the choice of the interface boundary conditions has been done for second-order transient problems 
(i.e., structural dynamics) using the Newmark family of time integrators. For example, see Cardona and Geradin 
\cite{Cardona_Geradin_ComputStruc_1989_v33_p801}, Farhat \textit{et. al.} \cite{Farhat_Crivelli_Geradin_CMAME_1995_v125_p71}, 
and Combescure and Gravouil \cite{Combescure_Gravouil_CMAME_2002_v191_p1129}. All these papers deal with structural dynamics, 
which is a second order transient systems. 

In Reference \cite{Nakshatrala_Hjelmstad_Tortorelli_IJNME} a coupling method has been 
proposed for first-order transient systems that can accommodate different time integrators 
and time steps in different subdomains. This was made possible by making the calculation 
of the interface Lagrange multiplier explicit. In this paper we restrict our study to time 
stepping schemes from the generalized trapezoidal family, and assume same time integrator 
and time step in all subdomains. This will allow the calculation of the interface Lagrange 
multiplier to be implicit, and thereby increasing the overall numerical stability. 

The main objective of this article is to present four variants of dual Schur domain decomposition 
method for linear first-order transient systems, and assess their stability. We restrict our study 
to conforming computational meshes. The stability analysis is based on the ``energy'' method 
\cite{Richtmyer_Morton,Hughes}. The stability results for first-order systems (which are index-2 
DAEs) presented in this paper are \emph{not} a direct extension of results for ODEs or second-order 
systems (e.g., structural dynamics), and to the best of our knowledge, have not been reported in the 
literature. 

\begin{remark}
  A thorough discussion of other computational aspects like preconditioners, parallel 
  implementation issues (like scalability and speedup), direct and iterative solvers 
  in the context of domain decomposition method is beyond the scope of this paper. These 
  computational aspects have been addressed adequately in the literature and we have 
  cited a few representative references to this end. These techniques can be generally 
  employed without difficulty for the proposed domain decomposition methods. 
\end{remark}

\subsection{Main contributions} Some of the main contributions of 
this paper are 
\begin{itemize}
\item extension of the energy method for assessing stability to linear index-2 
  differential-algebraic equations,
\item stability analysis of the $\boldsymbol{d}$-continuity method, and demonstrated the 
  instability of the $\boldsymbol{d}$-continuity method under many common time integrators 
  from the generalized trapezoidal family,
\item modified $\boldsymbol{d}$-continuity method and its stability proof, 
\item derived an upper bound for the user-defined parameter in the Baumgarte stabilization 
  to ensure numerical stability, and also estimated the critical time step for the method, 
  and
\item verified numerically that the optimal spatial convergence rate is unaffected by these domain 
  decomposition methods.
\end{itemize}

\section{TIME CONTINUOUS GOVERNING EQUATIONS}
We will consider linear transient heat conduction as our model problem to illustrate 
various coupling methods. Consider a continuous domain $\Omega$ with prescribed 
temperature on $\partial_1 \Omega$ and prescribed fluxes on $\partial_2 \Omega$ 
(and for well-posedness we have $\partial_1 \Omega \cap \partial_2 \Omega = \emptyset$ 
and  $\partial_1 \Omega \cup \partial_2 \Omega = \partial \Omega$). The semi-discrete 
finite element equations for linear transient heat conduction can be written as 
(for example, see Hughes \cite{Hughes})
\begin{align}
  \label{Eqn:Trapezoidal_semi_discrete}
  \boldsymbol{M} \dot{\boldsymbol{u}} + \boldsymbol{K}\boldsymbol{u} = \boldsymbol{f}, \quad \forall t \in [0,T] 
\end{align}
where $\boldsymbol{u}(t)$ is the nodal temperature vector, $t$ denotes the time, superposed 
dot denotes the time derivative, $\boldsymbol{M}$ is the capacity matrix, $\boldsymbol{K}$ 
is the conductivity matrix, and $\boldsymbol{f}(t)$ is the (prescribed) external heat source 
vector. The capacity matrix $\boldsymbol{M}$ is assumed to be symmetric and positive definite, 
and the conductivity matrix $\boldsymbol{K}$ is assumed to be symmetric and positive semidefinite. 
The following initial conditions and constraints complete the above differential system:
\begin{align}
  \label{Eqn:Trapezoidal_initial_constraints}
  \boldsymbol{u}(t = 0) = \boldsymbol{u}^{(0)}, \quad \boldsymbol{u}|_{\partial_1 \Omega} = 
  \boldsymbol{u}_{\mathrm{p}}(t)
\end{align}
where $\boldsymbol{u}^{(0)}$ is the prescribed nodal initial temperature, and 
$\boldsymbol{u}_{\mathrm{p}}$ is the prescribed nodal temperature on the boundary 
$\partial_1 \Omega$. 

\subsection{Decomposed problem}
We decompose the computational domain into $S$ subdomains following a dual Schur formulation. 
This approach automatically implies that the equilibrium of the interface fluxes is enforced 
through the Lagrange multipliers. We assume that the kinematic constraints are linearly 
independent. We assume that the kinematic constraints are linearly independent and are 
compactly expressed using signed Boolean matrices. A signed Boolean matrix has entries 
either $-1$, $0$ or $+1$ such that each row has at most one non-zero entry, see Reference 
\cite{Nakshatrala_Hjelmstad_Tortorelli_IJNME}. 
Note that by the construction of the signed Boolean matrix $\boldsymbol{C}_i$, its $1$-norm is 
$\|\boldsymbol{C}_i\|_{1} \leq 1$, which will be used in Section \ref{Sec:DD_S4_Stability_Analysis}. 
Also note that if $\boldsymbol{C}_i \neq \boldsymbol{0}$ (which is the case in this paper) 
$\|\boldsymbol{C}_i\|_{1} = 1$.

\begin{remark}
  Using signed Boolean matrices as defined above one can handle cross points (which 
  are the points at which more than two subdomains intersect), and should be able 
  to construct linearly independent kinematic constraints. For further details see 
  Reference \cite{Nakshatrala_Hjelmstad_Tortorelli_IJNME}.
\end{remark}

The decomposition into subdomains of the differential system given by equation 
\eqref{Eqn:Trapezoidal_semi_discrete} can be written as follows: $\forall t \in [0,T]$ 
we have 
\begin{align}
  \label{Eqn:Trapezoidal_domain_equation}
  & \boldsymbol{M}_{i} \dot{\boldsymbol{u}}_{i} + \boldsymbol{K}_{i} \boldsymbol{u}_{i} = 
  \boldsymbol{f}_{i} + {\boldsymbol{C}_{i}}^{\mathrm{T}} 
  \boldsymbol{\lambda} \quad \forall \; i = 1, \cdots, S \\
  \label{Eqn:Trapezoidal_kinematic_continuity}
  & \sum_{i = 1}^{S} \boldsymbol{C}_{i} \boldsymbol{u}_i = \boldsymbol{0}
\end{align}
where $\boldsymbol{\lambda}$ represents the vector of Lagrange multipliers, and 
$\boldsymbol{C}_i$ the signed Boolean connectivity matrices for perfect connection 
between compatible meshes. Note that the time continuous formulation is capable of 
enforcing the continuity of all the kinematic quantities and their time derivatives 
along the subdomain interface. However, this is not true of time discretized equations 
as we will see in the next section. 

\begin{remark} 
For a derivation of equations 
\eqref{Eqn:Trapezoidal_domain_equation}-\eqref{Eqn:Trapezoidal_kinematic_continuity} 
see Reference \cite[Appendix A]{Nakshatrala_Hjelmstad_Tortorelli_IJNME}. 
\end{remark}

It is important to note that the undecomposed problem (given by equation \eqref{Eqn:Trapezoidal_semi_discrete}) 
is a system of ODEs, whereas the decomposed problem consists of a system of ODEs given by equation 
\eqref{Eqn:Trapezoidal_domain_equation}, and a system of algebraic equations given by equation 
\eqref{Eqn:Trapezoidal_kinematic_continuity}. Such a system of equations (that consists of a 
system of ODEs and a system of algebraic equations) belong to a broader class of equations 
called \emph{differential-algebraic equations} (DAEs). For completeness, we present a very 
brief discussion of DAEs. 

An important special class is the semi-explicit DAE (also referred to an ODE with 
algebraic constraints), which can be mathematically written as 
\begin{align}
  \dot{\boldsymbol{x}} = \boldsymbol{h}_1(t,\boldsymbol{x},\boldsymbol{y}), \quad 
  \boldsymbol{0} = \boldsymbol{h}_2(t,\boldsymbol{x},\boldsymbol{y})
\end{align}
It is easy to see that the governing equations of the decomposed problem, given by equations 
\eqref{Eqn:Trapezoidal_domain_equation}-\eqref{Eqn:Trapezoidal_kinematic_continuity}, form 
a semi-explicit DAE.  

The index of a DAE is a measure to assess the difficulty to solve it numerically. Some of 
the common indices are -- the differential index, local index, geometric index, tractability 
index, and perturbation index \cite{Ascher_Petzold}. A popular index that is easy to compute 
is the differential index. For a semi-explicit DAE, the differential index can be defined as 
the minimum number of times the (algebraic) constraint must be differentiated to put the 
DAE in the standard ODE form 
\begin{align*}
  \dot{\boldsymbol{z}} = \boldsymbol{q} (t, \boldsymbol{z}), \quad \boldsymbol{z} 
  := \left(\boldsymbol{x}^{\mathrm{T}},\boldsymbol{y}^{\mathrm{T}}\right)^{\mathrm{T}}
\end{align*} 
using purely algebraic manipulations. The higher the differential index, the greater 
is the difficulty to solve the DAE numerically. For the DAE given by equations 
\eqref{Eqn:Trapezoidal_domain_equation}-\eqref{Eqn:Trapezoidal_kinematic_continuity}, 
the differential index is $2$. Note that the differential index of a system of ODEs 
is $0$. For a thorough discussion of differential-algebraic equations refer to 
\cite{Ascher_Petzold,Mattheij_Molenaar}. 

It is well-known that numerical solvers for ODEs may not work well for DAEs. In many cases 
there are stability and accuracy (e.g., drift in the kinematic constraints) issues. This fact 
has been discussed in a seminal paper by Petzold \cite{Petzold_SIAMJSciStatComp_1982_v3_p367}. 
Time stepping schemes from the generalized trapezoidal family have been developed primarily 
for numerically solving ODEs. The stability results for most numerical integrators that have 
been reported in the literature (for example, Reference \cite{Hughes}) are appropriate for ODEs. 
The present paper addresses the stability of the time stepping schemes from the generalized 
trapezoidal family when applied to DAEs of the form given by equations 
\eqref{Eqn:Trapezoidal_domain_equation}-\eqref{Eqn:Trapezoidal_kinematic_continuity}.

\section{TIME DISCRETE EQUATIONS}
The time interval of interest $[0,T]$ is divided into $N$ equal time steps of size $\Delta t > 0$  
and the equations are to be solved numerically at discrete instants of time $t_n=t_0 + n \Delta t$ 
with $t_0 = 0$ and $t_N = T$. 
One of the popular ways to solve equation \eqref{Eqn:Trapezoidal_semi_discrete} numerically 
is by employing a time stepping scheme from the generalized trapezoidal family. The numerical 
solution procedure for the generalized trapezoidal family can be written as: given 
$(\boldsymbol{d}^{(n-1)}, \boldsymbol{v}^{(n-1)})$ obtain $(\boldsymbol{d}^{(n)}, 
\boldsymbol{v}^{(n)})$ by solving 
\begin{align}
  \label{Eqn:Trapezoidal_time_discrete}
  & \boldsymbol{M} \boldsymbol{v}^{(n)} + \boldsymbol{K} \boldsymbol{d}^{(n)} = \boldsymbol{f}^{(n)} \\
  \label{Eqn:Trapezoidal_generalized_trapezoidal}
  & \boldsymbol{d}^{(n)} = \boldsymbol{d}^{(n-1)} + \Delta t \left((1 - \gamma) \boldsymbol{v}^{(n-1)} 
  + \gamma \boldsymbol{v}^{(n)}\right)
\end{align}
where $0 \leq \gamma \leq 1$ is a user-defined parameter, and 
\begin{align}
  \label{Eqn:Trapezoidal_un_vn}
  \boldsymbol{d}^{(n)} \approx \boldsymbol{u}(t_n), \; 
  \boldsymbol{v}^{(n)} \approx \dot{\boldsymbol{u}}(t_n), \; 
  \boldsymbol{f}^{(n)} \approx \boldsymbol{f}(t_n) \quad \forall n
\end{align}
Some of the popular time integrators that belong to the generalized trapezoidal family 
are forward Euler $(\gamma = 0)$, midpoint rule $(\gamma = 1/2)$, and backward Euler 
$(\gamma = 1)$. It is well-known that the time stepping schemes from the generalized 
trapezoidal family when applied to ODEs are unconditionally stable for $\gamma \geq 1/2$ 
and conditionally stable for $\gamma < 1/2$. For further details see Reference \cite{Hughes}.

\subsection{Decomposed problem}
In the time discrete case of the decomposed problem two cases can be envisaged for enforcing 
the kinematic continuity. We can either prescribe continuity of temperature (which we refer to 
as $\boldsymbol{d}$-continuity), or continuity of rate of temperature (which we refer to as 
$\boldsymbol{v}$-continuity). These two kinematic continuity constraints can be mathematically 
written as follows.
\begin{align}
  \mbox{$\boldsymbol{d}$-continuity:} \quad \sum_{i=1}^{S} \boldsymbol{C}_i 
  \boldsymbol{d}_i^{(n)} = \boldsymbol{0} \quad \forall n \\
  \mbox{$\boldsymbol{v}$-continuity:} \quad \sum_{i=1}^{S} \boldsymbol{C}_i 
  \boldsymbol{v}_i^{(n)} = \boldsymbol{0} \quad \forall n
\end{align}
From a discrete point of view, we cannot, in general, enforce the continuity of both the temperature 
and the rates at the interface. (One exception, as we will show later, is the backward Euler with 
$\boldsymbol{d}$-continuity in which we can satisfy both the discrete kinematic continuity constraints, 
see Remark \ref{Remark:DD_backward_Euler_d_continuity}.) Based on the choice of kinematic continuity 
constraints we have two classes of domain decomposition method. We now present four different domain 
decomposition methods of which two methods employ $\boldsymbol{d}$-continuity, the third method employs 
$\boldsymbol{v}$-continuity, and the fourth uses a linear combination of $\boldsymbol{d}$-continuity and 
$\boldsymbol{v}$-continuity.

\section{DUAL SCHUR DOMAIN DECOMPOSITION METHODS}
\label{Sec:Trapezoidal_DD_methods}
\subsection{$\boldsymbol{d}$-continuity domain decomposition method}
This method can be written as: $\forall n = 1, \cdots, N$ and $\forall i = 1, \cdots, S$; 
obtain $\left(\boldsymbol{d}_i^{(n)},\boldsymbol{v}_i^{(n)}, \boldsymbol{\lambda}^{(n)}\right)$ 
by solving the following linear system of algebraic equations. 
\begin{align}
  \label{Eqn:Trapezoidal_d_continuity_DD_domain}
  & \boldsymbol{M}_{i} \boldsymbol{v}_{i}^{(n)} + \boldsymbol{K}_{i} \boldsymbol{d}_{i}^{(n)} = 
  \boldsymbol{f}_{i}^{(n)} + {\boldsymbol{C}_{i}}^{\mathrm{T}} \boldsymbol{\lambda}^{(n)} \\
  \label{Eqn:Trapezoidal_d_continuity_DD_trapezoidal}
  & \boldsymbol{d}_i^{(n)} = \boldsymbol{d}_i^{(n-1)} + \Delta t 
  \left( (1 - \gamma) \boldsymbol{v}_i^{(n-1)} + \gamma \boldsymbol{v}_i^{(n)}\right) \\
  \label{Eqn:Trapezoidal_d_continuity_DD_kinematic}
  & \sum_{i = 1}^{S} \boldsymbol{C}_{i} \boldsymbol{d}_i^{(n)} = \boldsymbol{0}
\end{align}

\subsection{Modified $\boldsymbol{d}$-continuity domain decomposition method}
This method is actually motivated by the stability analysis of the $\boldsymbol{d}$-continuity domain 
decomposition method, which is presented in a later section. The modified $\boldsymbol{d}$-continuity 
method can be written as: $\forall n = 0, \cdots, N-1$ and $\forall i = 1, \cdots, S$; obtain 
$\left(\boldsymbol{d}_i^{(n+1)}, \boldsymbol{v}_i^{(n + \gamma)}, \boldsymbol{\lambda}^{(n + \gamma)}\right)$ 
by solving the following linear system of algebraic equations.
\begin{align}
\label{Eqn:Trapezoidal_modified_d_continuity_DD_domain}
& \boldsymbol{M}_i \boldsymbol{v}_i^{(n + \gamma)} + \boldsymbol{K}_i \boldsymbol{d}_i^{(n+\gamma)} 
= \boldsymbol{f}_i^{(n+\gamma)} + \boldsymbol{C}_i^{\mathrm{T}} \boldsymbol{\lambda}^{(n + \gamma)}\\
\label{Eqn:Trapezoidal_modified_d_continuity_DD_trapezoidal}
& \boldsymbol{d}_i^{(n + 1)} = \boldsymbol{d}_i^{(n)} + \Delta t \boldsymbol{v}_i^{(n+\gamma)}; \; 
\boldsymbol{d}_i^{(n + \gamma)} = (1 - \gamma) \boldsymbol{d}_i^{(n)} + \gamma \boldsymbol{d}_i^{(n+1)} \\
\label{Eqn:Trapezoidal_modified_d_continuity_DD_kinematic}
& \sum_{i=1}^{S} \boldsymbol{C}_i \boldsymbol{d}_i^{(n+1)} = \boldsymbol{0} 
\end{align}
If required, calculate $\boldsymbol{v}_i^{(n+1)}$ and $\boldsymbol{\lambda}^{(n+1)}$ 
as follows (see Figure \ref{Fig:DD_interpolation_lambda}).
\begin{align}
  \label{Eqn:DD_modified_v_interpolation}
\boldsymbol{v}_i^{(n+1)} &= \gamma \boldsymbol{v}_i^{(n+\gamma)} + 
(1 - \gamma) \boldsymbol{v}_i^{(n+1+\gamma)} \\
  \label{Eqn:DD_modified_lambda_interpolation}
\boldsymbol{\lambda}^{(n+1)} &= \gamma \boldsymbol{\lambda}^{(n+\gamma)} + 
(1 - \gamma) \boldsymbol{\lambda}^{(n+1+\gamma)} 
\end{align}

\begin{figure}
  \psfrag{d}{$\boldsymbol{d}$-continuity method}
  \psfrag{md}{modified $\boldsymbol{d}$-continuity method}
  \psfrag{g}{$(1- \gamma) \Delta t$}
  \psfrag{g1}{$\gamma\Delta t$}
  \psfrag{tn11}{$t_{n-1}$}
  \psfrag{tn}{$t_{n}$}
  \psfrag{tn1}{$t_{n+1}$}
  \psfrag{tn2}{$t_{n+2}$}
  \psfrag{tn11g}{$t_{n-1 + \gamma}$}
  \psfrag{tng}{$t_{n + \gamma}$}
  \psfrag{tn1g}{$t_{n+1+\gamma}$}
  \psfrag{Ln11}{$\boldsymbol{\lambda}^{(n-1)}$}
  \psfrag{Ln}{$\boldsymbol{\lambda}^{(n)}$}
  \psfrag{Ln1}{$\boldsymbol{\lambda}^{(n+1)}$}
  \psfrag{Ln2}{$\boldsymbol{\lambda}^{(n+2)}$}
  \psfrag{Ln11g}{$\boldsymbol{\lambda}^{(n-1+\gamma)}$}
  \psfrag{Lng}{$\boldsymbol{\lambda}^{(n + \gamma)}$}
  \psfrag{Ln1g}{$\boldsymbol{\lambda}^{(n+1+\gamma)}$}
  \includegraphics[scale=0.75]{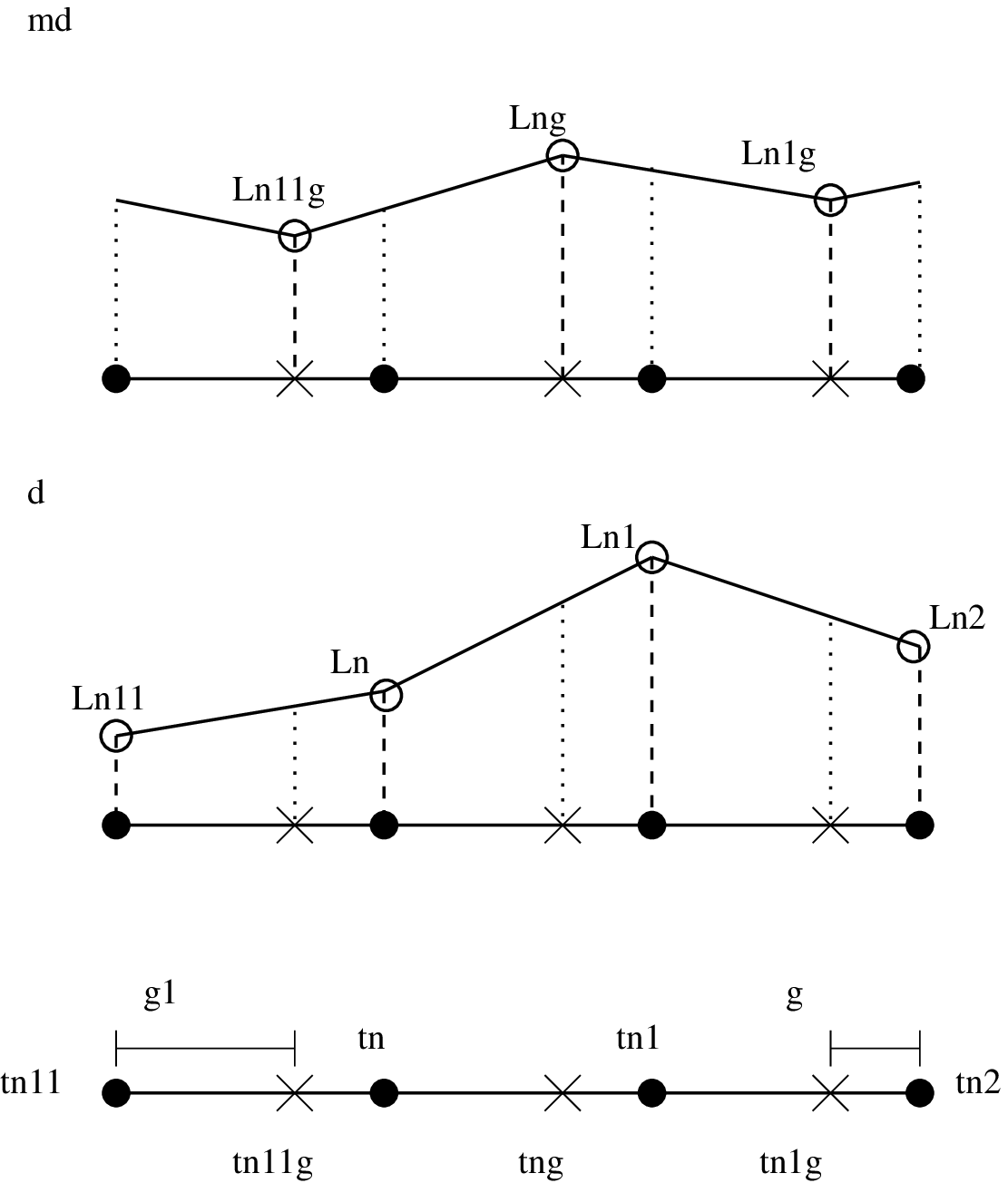}
  \caption{This figure shows the main difference between the interpolations used in the 
    $\boldsymbol{d}$-continuity and  modified $\boldsymbol{d}$-continuity domain decomposition 
    methods for calculating Lagrange multipliers and rates at various time levels. In the 
    $\boldsymbol{d}$-continuity method, one solves for the Lagrange multipliers (and rates) 
    at the integral time levels $t_n$. If one chooses, the Lagrange multipliers at the time 
    levels $t_{n + \gamma}$ can be obtained as $\boldsymbol{\lambda}^{(n + \gamma)} = (1 - \gamma) 
    \boldsymbol{\lambda}^{(n)} + \gamma \boldsymbol{\lambda}^{(n+1)}$, which is illustrated in the 
    middle figure. In the modified $\boldsymbol{d}$-continuity method one solves for the Lagrange 
    multipliers at the weighted time levels $t_{n + \gamma}$. If one chooses, the Lagrange multipliers 
    at integral time levels can be obtained as $\boldsymbol{\lambda}^{(n)} = \gamma 
    \boldsymbol{\lambda}^{(n - 1 + \gamma)} + (1 - \gamma) \boldsymbol{\lambda}^{(n+\gamma)}$, which 
    is illustrated in the top figure. Similar explanation holds for interpolating the rates 
    at various time levels. \label{Fig:DD_interpolation_lambda}}
\end{figure}

\begin{remark}
In the modified $\boldsymbol{d}$-continuity domain decomposition method the equilibrium is 
enforced at time level $(n + \gamma)$ and the kinematic constraints are enforced at time level 
$(n+1)$ (that is, at an integer time level). See Appendix for a numerical implementation procedure 
for the modified $\boldsymbol{d}$-continuity method. 
\end{remark}
\begin{remark}
  \label{Remark:Trapezoidal_forward_Euler_ill_posed}
  One cannot employ the forward Euler $(\gamma = 0)$ in the $\boldsymbol{d}$-continuity 
  and modified $\boldsymbol{d}$-continuity domain decomposition methods. To see this, 
  let us consider the $\boldsymbol{d}$-continuity method. 
  
  For the case $\gamma = 0$, given $\boldsymbol{d}_i^{(n-1)}$ and $\boldsymbol{v}_i^{(n-1)}$, 
  the calculation of $\boldsymbol{d}_i^{(n)}$ becomes explicit as we can directly evaluate 
  the quantity using equation \eqref{Eqn:Trapezoidal_d_continuity_DD_trapezoidal}. 
  The kinematic constraints given by equation \eqref{Eqn:Trapezoidal_d_continuity_DD_kinematic} 
  may not be consistent with the $\boldsymbol{d}_i^{(n)}$ computed from equation 
  \eqref{Eqn:Trapezoidal_d_continuity_DD_trapezoidal}. 
  We still need to find $\boldsymbol{v}_i^{(n)}$ and $\boldsymbol{\lambda}^{(n)}$ but we have only the 
  subdomain equation \eqref{Eqn:Trapezoidal_d_continuity_DD_domain}, which is also of the same size as 
  the individual subdomain vectors $\boldsymbol{v}_i^{(n)}$. With forward Euler, the system given by equations 
  \eqref{Eqn:Trapezoidal_d_continuity_DD_domain}-\eqref{Eqn:Trapezoidal_d_continuity_DD_kinematic} 
  will be under-determined, and hence the problem is not well-posed. Similarly, one can show that 
  the forward Euler is not compatible with the modified $\boldsymbol{d}$-continuity method. 
\end{remark}

\subsection{$\boldsymbol{v}$-continuity domain decomposition method} 
This method is based on the standard index reduction technique by analytical differentiation 
of the (kinematic) constraints. The method can be written as: $\forall n = 1, \cdots, N$ and 
$\forall i = 1, \cdots, S$; obtain $\left(\boldsymbol{d}_i^{(n)},\boldsymbol{v}_i^{(n)}, 
\boldsymbol{\lambda}^{(n)}\right)$ by solving the following linear system of algebraic 
equations. 
\begin{align}
  \label{Eqn:Trapezoidal_v_continuity_DD_domain}
  & \boldsymbol{M}_{i} \boldsymbol{v}_{i}^{(n)} + \boldsymbol{K}_{i} \boldsymbol{d}_{i}^{(n)} = 
  \boldsymbol{f}_{i}^{(n)} + {\boldsymbol{C}_{i}}^{\mathrm{T}} \boldsymbol{\lambda}^{(n)} \\
  \label{Eqn:Trapezoidal_v_continuity_DD_trapezoidal}
  & \boldsymbol{d}_i^{(n)} = \boldsymbol{d}_i^{(n-1)} + \Delta t 
  \left( (1 - \gamma) \boldsymbol{v}_i^{(n-1)} + \gamma \boldsymbol{v}_i^{(n)}\right) \\
  \label{Eqn:Trapezoidal_v_continuity_DD_kinematic}
  & \sum_{i = 1}^{S} \boldsymbol{C}_{i} \boldsymbol{v}_i^{(n)} = \boldsymbol{0}
\end{align}

Since the $\boldsymbol{v}$-continuity method enforces the constraints on the rates, one 
may get significant drift in the original constraint (i.e., continuity of temperature 
along the subdomain interfaces) for larger time steps and for larger number of steps 
(i.e., longer simulation time). This irrecoverable drift in the original constraint 
makes the $\boldsymbol{v}$-continuity method undesirable in such situations. 
One of the popular ways to control drift in the constraints is Baumgarte stabilization, which 
has been proposed in the context of multibody dynamics \cite{Baumgarte_CMAME_1972_v1_p1}. With a 
slight modification, one can extend Baumgarte stabilization to index-2 DAEs first-order transient 
systems (which is the case in this paper). Following this approach, we construct a new domain 
decomposition method, which is outlined in the next subsection. 

\subsection{Baumgarte stabilized domain decomposition method}
This method enforces a linear combination of the $\boldsymbol{d}$-continuity and 
$\boldsymbol{v}$-continuity as the kinematic constraints. 
The method can be written as: $\forall n = 1, \cdots, N$ and $\forall i = 1, \cdots, S$; 
obtain $\left(\boldsymbol{d}_i^{(n)},\boldsymbol{v}_i^{(n)}, \boldsymbol{\lambda}^{(n)} 
\right)$ by solving the following linear system of algebraic equations:
\begin{align}
  \label{Eqn:Trapezoidal_Baumgarte_domain}
  & \boldsymbol{M}_{i} \boldsymbol{v}_{i}^{(n)} + \boldsymbol{K}_{i} \boldsymbol{d}_{i}^{(n)} = 
  \boldsymbol{f}_{i}^{(n)} + {\boldsymbol{C}_{i}}^{\mathrm{T}} \boldsymbol{\lambda}^{(n)} \\
  \label{Eqn:Trapezoidal_Baumgarte_trapezoidal}
  & \boldsymbol{d}_i^{(n)} = \boldsymbol{d}_i^{(n-1)} + \Delta t 
  \left( (1 - \gamma) \boldsymbol{v}_i^{(n-1)} + \gamma \boldsymbol{v}_i^{(n)}\right) \\
  \label{Eqn:Trapezoidal_Baumgarte_kinematic}
  &\sum_{i=1}^{S} \boldsymbol{C}_i \boldsymbol{v}_i^{(n)} + \frac{\alpha}{\Delta t} 
  \sum_{i=1}^{S} \boldsymbol{C}_i \boldsymbol{d}_i^{(n)} = \boldsymbol{0} 
\end{align}
where $\alpha > 0$ is a dimensionless user-defined parameter, which we will refer to as 
the Baumgarte parameter. As we will see in Section \ref{Sec:DD_S4_Stability_Analysis}, 
the choice of $\alpha$ will effect the accuracy (the drift in the original constraint, 
that is, continuity of temperature along the subdomain interface) and stability (i.e., 
the critical time step). Typically, the larger the Baumgarte parameter, the smaller will be the 
drift in the original constraint, but also smaller may be the critical time step (which 
may depend on the choice of $\gamma$). In Section \ref{Sec:DD_S4_Stability_Analysis} we 
derive bounds on the choice of $\alpha$ that ensures stability of the domain decomposition 
method. 

Also, note that, unlike the $\boldsymbol{d}$-continuity and modified $\boldsymbol{d}$-continuity 
methods, one can employ the forward Euler $(\gamma = 0)$ under the Baumgarte stabilized domain 
decomposition method. 

In the remainder of the paper we will address the following questions by providing 
rigorous mathematical justifications.
\begin{itemize}
\item Under what conditions do the $\boldsymbol{d}$-continuity and $\boldsymbol{v}$-continuity 
  domain decomposition methods give stable results?
\item Do the algebraic equations alter the stability of the numerical time stepping 
  schemes from the generalized trapezoidal family? If so, which of these time stepping 
  schemes are stable?
\item Do any of the time-stepping schemes from the generalized trapezoidal family 
  achieve continuity of both temperature and rate of temperature along the subdomain 
  interface? 
\end{itemize}
\begin{remark}
Before we perform stability analyses of the methods presented in the previous section, we 
briefly comment on the implementation of dual Schur domain decomposition methods. Various 
ways of implementating dual Schur domain decomposition methods have been proposed in the 
literature. A thorough description is beyond the scope of this paper. But, herein, we just 
cite few representative papers that can be consulted for effective implementation of dual 
Schur domain decomposition methods. 

The FETI method is a popular way of implementing the dual Schur domain decomposition 
method which has been shown to possess good convergence and scability properties 
\cite{Farhat_Roux_IJNME_1991_v32_p1205,Farhat_Chen_Mandel_IJNME_1995_v38_p3831,
  Farhat_Crivelli_Roux_IJNME_1994_v37_p1945}. An interesting approach (in the context 
of structural dynamics) has been developed in \cite{Gravouil_Combescure_IJNME_2001_v50_p199,
  Prakash_Hjelmstad_IJNME_2004_v61_p2183}. Any of the aforementioned implementation methodologies 
can be employed for the methods presented in this paper. 
\end{remark}

\section{STABILITY ANALYSIS}
\label{Sec:DD_S4_Stability_Analysis}
In this section we assess the stability of the $\boldsymbol{d}$-continuity, modified 
$\boldsymbol{d}$-continuity, $\boldsymbol{v}$-continuity and Baumgarte stabilized 
domain decomposition methods using the ``energy'' method \cite{Richtmyer_Morton,Hughes}. Note that 
the energy method provides sufficient conditions for stability. However, in many 
instances the obtained bounds are quite sharp. For example, in the case of ODEs, 
the obtained stability condition using the energy method (that is, the critical 
time step) has turned out to be both necessary and sufficient \cite{Hughes}. 

Let $\mathbb{R}^m$ denote the standard $m$-dimensional Euclidean space. We say the 
vectors $\boldsymbol{x}^{(n)} \in \mathbb{R}^{m} \; (n = 0, 1, 2, \cdots)$ are bounded 
$\forall n$ if there exists a positive constant $C$ independent of $n$ such that 
\begin{align}
  \| \boldsymbol{v}^{(n)}\| < C \quad \forall n
\end{align}
where $\| \cdot \|$ is some convenient norm defined on $\mathbb{R}^m$ (for example, 
say the $2$-norm). Note that in finite dimensional vector spaces all norms are 
equivalent \cite{Halmos}. 

We employ the following notation for the jump and average operators over a time step:  
\begin{align}
  \left[\boldsymbol{x}^{(n)}\right] = \boldsymbol{x}^{(n+1)} - \boldsymbol{x}^{(n)}, \quad   
  \left\{\boldsymbol{x}^{(n)}\right\} = \frac{1}{2} \left(\boldsymbol{x}^{(n+1)} + \boldsymbol{x}^{(n)} \right)
\end{align}
It is easy to show the following identities: 
\begin{align}
  \label{Eqn:Trapezoidal_x_gamma}
  (1 - \gamma) \boldsymbol{x}^{(n)} + \gamma \boldsymbol{x}^{(n+1)} 
  = \left(\gamma - \frac{1}{2}\right) \left[\boldsymbol{x}^{(n)}\right] + \left\{\boldsymbol{x}^{(n)}\right\}
\end{align}
and for any symmetric matrix $\boldsymbol{S}$ we have 
\begin{align}
  \label{Eqn:Trapezoidal_Symmetric_S}
  \left\{\boldsymbol{x}^{(n)}\right\}^{\mathrm{T}} \boldsymbol{S} 
  \left[\boldsymbol{x}^{(n)}\right] = 
  \frac{1}{2} \left[{\boldsymbol{x}^{(n)}}^{\mathrm{T}} \boldsymbol{S} \boldsymbol{x}^{(n)} \right] 
\end{align}

For convenience we define the matrix $\boldsymbol{A}_i$ as 
\begin{align}
  \label{Eqn:Trapezoidal_A_i}
  \boldsymbol{A}_i := \boldsymbol{M}_i + \left(\gamma - \frac{1}{2}\right) \Delta t \boldsymbol{K}_i
\end{align}

We choose the time step $\Delta t$ in such a way that it satisfies the stability requirements 
for all individual unconstrained subdomains (i.e., $\boldsymbol{\lambda} = \boldsymbol{0}$). 
That is,
\begin{align}
  \Delta t < \mathrm{min} \left(\Delta t_{1}^{\mathrm{crit}}, \cdots, \Delta t_{S}^{\mathrm{crit}}\right)
\end{align}
and the critical time step for subdomain $i$ is given by 
\begin{align}
  \Delta t_{i}^{\mathrm{crit}} = \left\{\begin{array}{cl}
      \frac{2}{\omega_{i}^{\mathrm{max}} (1 - 2 \gamma)} & 0 \leq \gamma < 1/2 \\
      +\infty & 1/2 \leq \gamma \leq 1
    \end{array}\right.
\end{align}
where $\omega_{i}^{\mathrm{max}}$ is the maximum eigenvalue of the generalized eigenvalue problem 
\begin{align}
  \label{Eqn:Trapezoidal_GEVP}
  \omega_i \boldsymbol{M}_i \boldsymbol{\phi}_i = \boldsymbol{K}_{i} \boldsymbol{\phi}_i
\end{align}
Note that all the eigenvalues $\omega_i$ are real and non-negative \cite{Hughes}.
For the chosen time step as described above, the matrices $\boldsymbol{A}_i \; (i = 1, \cdots, S)$ 
(defined in equation \eqref{Eqn:Trapezoidal_A_i}) are positive definite. Also, note that the matrices 
$\boldsymbol{A}_i$ are symmetric. 

\subsection{Stability of the $\boldsymbol{d}$-continuity method}
\label{Subsec:DD_d_continuity}
Equation \eqref{Eqn:Trapezoidal_d_continuity_DD_domain} implies 
\begin{align}
  \boldsymbol{M}_i \boldsymbol{v}^{(n + \gamma)}_i
  + \boldsymbol{K}_i \boldsymbol{d}^{(n + \gamma)}_i
  = \boldsymbol{f}_i^{(n + \gamma)} + \boldsymbol{C}_i^{\mathrm{T}} \boldsymbol{\lambda}^{(n + \gamma)} 
\end{align}
where $\boldsymbol{\lambda}^{(n + \gamma)} := (1 - \gamma) \boldsymbol{\lambda}^{(n)} 
+ \gamma \boldsymbol{\lambda}^{(n+1)}$; $\boldsymbol{f}_i^{(n + \gamma)} := (1 - \gamma) 
\boldsymbol{f}_i^{(n)} + \gamma \boldsymbol{f}_i^{(n+1)}$; and similar expressions for 
$\boldsymbol{d}_i^{(n+\gamma)}$ and $\boldsymbol{v}_i^{(n + \gamma)}$.
Using equations \eqref{Eqn:Trapezoidal_d_continuity_DD_trapezoidal}, \eqref{Eqn:Trapezoidal_x_gamma} 
and \eqref{Eqn:Trapezoidal_A_i}; the above equation can be written as 
\begin{align}
  \frac{1}{\Delta t} \boldsymbol{A}_i \left[\boldsymbol{d}^{(n)}_i \right] 
  + \boldsymbol{K}_i \left\{\boldsymbol{d}^{(n)}_i\right\}
  = \boldsymbol{f}_i^{(n + \gamma)} + \boldsymbol{C}_i^{\mathrm{T}} \boldsymbol{\lambda}^{(n + \gamma)}
\end{align}
Premultiplying the above equation by the vector $\left\{\boldsymbol{d}_i^{(n)}\right\}$,  
using the identity given in equation \eqref{Eqn:Trapezoidal_Symmetric_S}, and then summing 
over all $S$ subdomains; we get 
\begin{align}
  \sum_{i = 1}^{S} \frac{1}{2 \Delta t} \left[{\boldsymbol{d}^{(n)}_i}^{\mathrm{T}}
    \boldsymbol{A}_i \boldsymbol{d}^{(n)}_i \right] 
  + \sum_{i=1}^{S} {\left\{\boldsymbol{d}_{i}^{(n)}\right\}}^\mathrm{T} 
  \boldsymbol{K}_i \left\{\boldsymbol{d}^{(n)}_i\right\}
  = \sum_{i=1}^{S} {\left\{\boldsymbol{d}_i^{(n)}\right\}}^{\mathrm{T}} 
  \left(\boldsymbol{f}_i^{(n + \gamma)} + \boldsymbol{C}_i^{\mathrm{T}} \boldsymbol{\lambda}^{(n + \gamma)}\right) 
\end{align}
For stability analysis, one assumes the externally applied forces to be zero (i.e., 
$\boldsymbol{f}_i^{(n)} = \boldsymbol{0}; \; \forall i = 1,\cdots,S; \; \forall n$) 
\cite{Hughes}. Thus, we have
\begin{align}
  \sum_{i = 1}^{S} \frac{1}{2 \Delta t} \left[{\boldsymbol{d}^{(n)}_i}^{\mathrm{T}}
    \boldsymbol{A}_i \boldsymbol{d}^{(n)}_i \right] 
  + \sum_{i=1}^{S} {\left\{\boldsymbol{d}_{i}^{(n)}\right\}}^\mathrm{T} 
  \boldsymbol{K}_i \left\{\boldsymbol{d}^{(n)}_i\right\}
  = \sum_{i=1}^{S} {\left\{\boldsymbol{d}_i^{(n)}\right\}}^{\mathrm{T}} 
  \boldsymbol{C}_i^{\mathrm{T}} \boldsymbol{\lambda}^{(n + \gamma)}
\end{align}
By invoking the fact that the matrices $\boldsymbol{K}_i \; (i = 1, \cdots , S)$ 
are positive semidefinite, we conclude that 
\begin{align}
  \sum_{i = 1}^{S} \frac{1}{2 \Delta t} \left[{\boldsymbol{d}^{(n)}_i}^{\mathrm{T}}
    \boldsymbol{A}_i \boldsymbol{d}^{(n)}_i \right] 
  \leq \sum_{i = 1}^{S} {\left\{\boldsymbol{d}^{(n)}_i \right\}}^{\mathrm{T}} \boldsymbol{C}_i^{\mathrm{T}} 
  \boldsymbol{\lambda}^{(n + \gamma)} 
  = {\boldsymbol{\lambda}^{(n + \gamma)}}^{\mathrm{T}} \sum_{i=1}^{S} \boldsymbol{C}_i \left\{\boldsymbol{d}^{(n)}_i\right\} 
\end{align}
Using the linearity of the average operator (which allows us to interchange the 
summation and average operation), and the $\boldsymbol{d}$-continuity given by 
equation \eqref{Eqn:Trapezoidal_d_continuity_DD_kinematic}; we conclude that 
\begin{align}
 \sum_{i = 1}^{S} \frac{1}{2 \Delta t} \left[{\boldsymbol{d}^{(n)}_i}^{\mathrm{T}}
   \boldsymbol{A}_i \boldsymbol{d}^{(n)}_i \right] \leq 0 \quad \forall n
\end{align}
Using the definition and linearity of the jump operator, we conclude 
\begin{align}
  \sum_{i=1}^{S}{\boldsymbol{d}_i^{(n+1)}}^{\mathrm{T}}\boldsymbol{A}_{i} \boldsymbol{d}_i^{(n+1)} \leq 
  \sum_{i=1}^{S} {\boldsymbol{d}_i^{(n)}}^{\mathrm{T}}\boldsymbol{A}_{i} \boldsymbol{d}_i^{(n)} 
  \leq \cdots \leq 
  \sum_{i=1}^{S} {\boldsymbol{d}_i^{(0)}}^{\mathrm{T}}\boldsymbol{A}_{i} \boldsymbol{d}_i^{(0)} 
\end{align}
Since $\forall i = 1, \cdots, S$ the initial vectors $\boldsymbol{d}_i^{(0)}$ are bounded 
and the matrices $\boldsymbol{A}_i$ are positive definite; we conclude that the vectors 
$\boldsymbol{d}_i^{(n)} \; (i = 1, \cdots, S)$ are bounded $\forall n$. This implies that, 
from the trapezoidal equation  \eqref{Eqn:Trapezoidal_d_continuity_DD_trapezoidal}, the 
vectors $\boldsymbol{v}_i^{(n+\gamma)} := (1 - \gamma) \boldsymbol{v}_i^{(n)} + \gamma 
\boldsymbol{v}_i^{(n+1)} \; (i = 1, \cdots, S)$ are also bounded $\forall n$. 

We now show that the Lagrange multipliers $\boldsymbol{\lambda}^{(n + \gamma)}$ are bounded 
$\forall n$. There are several ways in proving this result. Herein, we show using the Schur 
complement operator (which is widely used in computer implementations, see Appendix). An 
alternate derivation is presented in subsection \ref{Subsec:DD_stability_v_continuity} for 
proving a similar result. Under zero external force the subdomain governing equation 
\eqref{Eqn:Trapezoidal_d_continuity_DD_domain} implies 
\begin{align}
  \label{Eqn:DD_d_continuity_step5}
  \boldsymbol{M}_i \boldsymbol{v}_i^{(n + \gamma)} + \boldsymbol{K}_i \boldsymbol{d}_i^{(n + \gamma)} 
  = \boldsymbol{C}_i^{\mathrm{T}} \boldsymbol{\lambda}^{(n + \gamma)}, \quad \forall i
\end{align}
Using $\boldsymbol{v}_i^{(n + \gamma)} = \left(\boldsymbol{d}_i^{(n+1)} - \boldsymbol{d}_i^{(n)}
\right)/\Delta t$ (see equation \eqref{Eqn:Trapezoidal_d_continuity_DD_trapezoidal}) and 
$\boldsymbol{d}_i^{(n + \gamma)} = (1 - \gamma) \boldsymbol{d}_i^{(n)} + \gamma \boldsymbol{d}_i^{(n+1)}$, 
the above equation can be written as 
\begin{align}
  \label{Eqn:DD_d_continuity_step5a}
  & \boldsymbol{d}_i^{(n+1)} = \tilde{\boldsymbol{M}}_i^{-1} \left(\boldsymbol{M}_i - (1 - \gamma) 
    \Delta t \boldsymbol{K}_i \right) \boldsymbol{d}_i^{(n)} + \Delta t \tilde{\boldsymbol{M}}_i^{-1} 
  \boldsymbol{C}_i^{\mathrm{T}} \boldsymbol{\lambda}^{(n + \gamma)} \\
  \label{Eqn:DD_d_continuity_M_i_tilde}
  & \mbox{where} \quad \tilde{\boldsymbol{M}}_i  := \boldsymbol{M}_i + \gamma \Delta t \boldsymbol{K}_i
\end{align}
By premultiplying equation \eqref{Eqn:DD_d_continuity_step5a} with $\boldsymbol{C}_i$, 
then summing over the number of subdomains (i.e., the index $i = 1, \cdots, S$), and 
using the kinematic constraint for $\boldsymbol{d}$-continuity method (equation 
\eqref{Eqn:Trapezoidal_d_continuity_DD_kinematic}); we get 
\begin{align}
  \label{Eqn:DD_d_continuity_step5b}
  & \boldsymbol{\lambda}^{(n + \gamma)} = -\frac{1}{\Delta t} \boldsymbol{G}^{-1} 
  \sum_{i = 1}^{S} \boldsymbol{C}_i \tilde{\boldsymbol{M}}_i^{-1} \left(\boldsymbol{M}_i - 
    (1 - \gamma) \Delta t \boldsymbol{K}_i \right) \boldsymbol{d}_i^{(n)} \\
  \label{Eqn:DD_d_continuity_G}
  & \mbox{where} \quad \boldsymbol{G} := \sum_{i=1}^{S} \boldsymbol{C}_i 
  \tilde{\boldsymbol{M}}_i^{-1} \boldsymbol{C}_i^{\mathrm{T}} 
\end{align}
Since the vectors $\boldsymbol{d}_i^{(n)}$ are bounded $\forall n$, from equation 
\eqref{Eqn:DD_d_continuity_step5b}, we conclude that the Lagrange multipliers 
$\boldsymbol{\lambda}^{(n+\gamma)}$ are also bounded $\forall n$. 
\begin{remark}
  \label{Remark:DD_d_continuity_M_i_tilde_G_positive_definite}
  Since the matrix $\tilde{\boldsymbol{M}}_i$ is positive definite (and hence invertible), 
  all the steps in equation \eqref{Eqn:DD_d_continuity_step5a} are valid.  
  Also, since the constraints are assumed to be linearly independent, one can easily 
  show that the matrix $\boldsymbol{G}$ (which is sometimes called the Schur complement 
  operator) is positive definite (and hence invertible). Therefore, all the steps in 
  equation \eqref{Eqn:DD_d_continuity_step5b} are valid. Also see Appendix. 
\end{remark} 

But the proof of stability is not yet complete as we have not said anything about the 
boundedness of $\boldsymbol{v}_i^{(n)}$ and $\boldsymbol{\lambda}^{(n)}$ and these 
quantities are used to advance the solution. Before we comment on the boundedness of 
rates and Lagrange multipliers at integer time levels $t_n$, we state and prove the 
following general result, which will be useful in assessing stability of coupling 
algorithms. We employ the standard notation used in mathematical analysis. To avoid 
ambiguity, let $\mathbb{N} := \left\{0,1,2, \cdots\right\}$.
\begin{proposition}
  \label{Proposition:DD_Proposition_on_sn_gamma}
  Let $\left(s^{(n)}\right)_{n \in \mathbb{N}}$ be a sequence of real numbers and let 
  $s^{(n + \gamma)} := (1 - \gamma) s^{(n)} + \gamma s^{(n+1)}$, where $0 \leq \gamma 
  \leq 1$. If the element $s^{(0)}$ and the sequence $\left(s^{(n + \gamma)}
  \right)_{n \in \mathbb{N}}$ are bounded, then for any $\gamma > 1/2$ the (original) 
  sequence $\left(s^{(n)}\right)_{n \in \mathbb{N}}$ is also bounded. 
\end{proposition}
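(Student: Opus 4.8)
The plan is to read the defining relation as a first-order linear recurrence for the original sequence, driven by the (bounded) weighted sequence, and to exploit the fact that the associated homogeneous iteration is a contraction precisely when $\gamma > 1/2$. First I would solve $s^{(n+\gamma)} = (1-\gamma)s^{(n)} + \gamma s^{(n+1)}$ for the ``new'' value $s^{(n+1)}$. Since $\gamma > 1/2 > 0$, division by $\gamma$ is legitimate, giving
\[
  s^{(n+1)} = -\frac{1-\gamma}{\gamma}\, s^{(n)} + \frac{1}{\gamma}\, s^{(n+\gamma)}.
\]
Writing $r := (1-\gamma)/\gamma$, this reads $s^{(n+1)} = -r\, s^{(n)} + \gamma^{-1} s^{(n+\gamma)}$.

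The crux of the whole argument is the elementary observation that $|r| < 1$ if and only if $\gamma > 1/2$. Indeed, for $1/2 < \gamma \leq 1$ we have $0 \leq 1-\gamma < \gamma$, hence $0 \leq r < 1$. (For $\gamma \leq 1/2$ one instead gets $r \geq 1$, so the homogeneous part no longer decays; this is exactly why the conclusion must be restricted to $\gamma > 1/2$.) This sign/contraction condition is the only genuinely essential step — everything after it is a routine geometric estimate.

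Next I would unroll the recurrence from the initial value. Let $M$ be a bound for $|s^{(n+\gamma)}|$, which exists by hypothesis. A one-line induction gives the closed form
\[
  s^{(n)} = (-r)^n s^{(0)} + \frac{1}{\gamma} \sum_{k=0}^{n-1} (-r)^{\,n-1-k}\, s^{(k+\gamma)}.
\]
Taking absolute values, using $|s^{(k+\gamma)}| \leq M$ and $0 \leq r < 1$, the finite geometric sum is dominated by the convergent series $\sum_{j=0}^{\infty} r^j = 1/(1-r)$, so that
\[
  |s^{(n)}| \leq |s^{(0)}| + \frac{M}{\gamma} \cdot \frac{1}{1-r} = |s^{(0)}| + \frac{M}{2\gamma - 1},
\]
where I have used $\gamma(1-r) = 2\gamma - 1 > 0$. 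The right-hand side is independent of $n$, which establishes the boundedness of $\left(s^{(n)}\right)_{n \in \mathbb{N}}$.

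As noted, the main (and really the sole) obstacle is securing the contraction $|r| < 1$; the remainder is a standard summation of a geometric series. I would also remark that the resulting threshold is sharp, since at $\gamma = 1/2$ the geometric factor satisfies $|r| = 1$ and ceases to decay, so the hypothesis $\gamma > 1/2$ cannot be weakened within this argument.
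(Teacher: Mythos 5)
Your proof is correct, and it takes a genuinely different route from the paper's. The paper argues by contradiction: it sets $c^{(n)} := \left|s^{(n)}\right|$, extracts a strictly increasing unbounded subsequence of ``record'' values, and then runs the defining relation \emph{backwards}, writing $s^{(n_p-1)} = \frac{1}{1-\gamma}\left(s^{(n_p-1+\gamma)} - \gamma s^{(n_p)}\right)$, to show that any sufficiently large record value (specifically $c^{(n_p)} \geq M/(2\gamma-1)$) forces the preceding term to be strictly larger, contradicting the record property. Because that backward step divides by $1-\gamma$, the paper must split off $\gamma = 1$ as a separate trivial case. You instead solve \emph{forwards} for $s^{(n+1)}$, dividing by $\gamma$ --- which is harmless for every $\gamma > 1/2$, including $\gamma = 1$ --- identify the homogeneous multiplier $-r$ with $r = (1-\gamma)/\gamma$ satisfying $0 \leq r < 1$ precisely when $\gamma > 1/2$, and unroll the recurrence into a geometric-series estimate. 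Your approach buys two things the paper's does not: it needs no case split, and it is constructive, producing the explicit uniform bound $\left|s^{(n)}\right| \leq \left|s^{(0)}\right| + M/(2\gamma-1)$, which makes quantitatively visible how the bound degenerates as $\gamma \to 1/2^{+}$ --- fully consistent with the paper's counterexamples at $\gamma = 1/2$ and $\gamma < 1/2$, and with your closing remark on sharpness. It is worth noting that both arguments ultimately pivot on the same constant $2\gamma - 1$: the paper's threshold $c^{(n_p)} \geq M/(2\gamma-1)$ is essentially your explicit bound read in reverse, so the two proofs are dual in spirit even though yours is direct and the paper's is by contradiction.
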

\begin{proof}
We split the proof into two different cases: $\gamma = 1$ and $1/2 < \gamma < 1$.

\textit{First case $(\gamma = 1)$}: The proof is trivial as in this case $s^{(n + \gamma)} 
= s^{(n+1)}$. The sequence $\left(s^{(n)}\right)_{n \in \mathbb{N}}$ can be obtained by 
augmenting the element $s^{(0)}$ at the start of the sequence 
$\left(s^{(n + \gamma)}\right)_{n \in \mathbb{N}}$. Since the element $s^{(0)}$ and the 
sequence $\left(s^{(n + \gamma)}\right)_{n \in \mathbb{N}}$ are bounded, we conclude that 
the original sequence $\left(s^{(n)}\right)_{n \in \mathbb{N}}$ is also bounded. 

\textit{Second case $(1/2 < \gamma < 1)$}: Construct a new sequence $\left(c^{(n)}\right)_{n \in \mathbb{N}}$ 
such that $c^{(n)} = \left|s^{(n)}\right|$. If $\left(s^{(n)}\right)_{n \in \mathbb{N}}$ is bounded then 
$\left(c^{(n)}\right)_{n \in \mathbb{N}}$ is also bounded and vice-versa. We now prove the proposition 
by the method of contradiction. 

Assume that the sequence $\left(c^{(n)}\right)_{n \in \mathbb{N}}$ is unbounded. Then we can 
find a strictly increasing unbounded subsequence $\left(c^{(n_k)}\right)_{k \in \mathbb{N}} 
\; (n_k \in \mathbb{N}, 0 \leq n_1 < n_2 < \cdots)$ such that 
\begin{align}
  & 0 < c^{(n_1)} < c^{(n_2)} < \cdots \rightarrow \infty \quad \mathrm{and} \\
  \label{Eqn:Trapezoidal_cm_leq_cnk}
  & c^{(m)} < c^{(n_k)} \quad  \forall m < n_k, m \in \mathbb{N}
\end{align}
Since the sequence $\left(s^{(n + \gamma)}\right)_{n \in \mathbb{N}}$ is bounded, we can find 
$0 < M \in \mathbb{R}$ such that 
\begin{align}
  \label{Eqn:Trapezoidal_sngamma_leq_M}
  \left|s^{(n + \gamma)}\right| < M \quad \forall n
\end{align}
Since the subsequence $\left(c^{(n_k)}\right)_{k \in \mathbb{N}}$ is strictly increasing and 
unbounded, we can find an element of this subsequence such that 
\begin{align}
  \label{Eqn:Trapezoidal_cnp1_geq_M}
  c^{(n_p)} \geq \frac{1}{2 \gamma - 1} M, \quad p \in \mathbb{N}
\end{align}
Since $1/2 < \gamma < 1$ (and, therefore $\gamma > 2 \gamma - 1$), from equations 
\eqref{Eqn:Trapezoidal_sngamma_leq_M} and \eqref{Eqn:Trapezoidal_cnp1_geq_M} we have  
\begin{align}
  \label{Eqn:Trapezoidal_main_inequality}
  \gamma c^{(n_p)} > (2 \gamma - 1) c^{(n_p)} \geq M >  \left|s^{(n + \gamma)}\right| \geq 0 \quad \forall n
\end{align}

Using the definition of $s^{(n_p - 1 + \gamma)}$ we have 
\begin{align}
  c^{(n_p - 1)} = \left|s^{(n_p - 1)}\right| &= \frac{1}{1 - \gamma} \left|s^{(n_p - 1 + \gamma)} - \gamma s^{(n_p)}\right| 
\end{align}
By using the triangle inequality (and noting that $c^{(n_p)} := |s^{(n_p)}|$) we conclude that 
\begin{align}
  c^{(n_p - 1)} \geq \frac{1}{1 - \gamma} \left|\gamma \left|s^{(n_p)}\right| - \left|s^{(n_p - 1 + \gamma)}\right| \right| 
  = \frac{1}{1 - \gamma} \left|\gamma c^{(n_p)} - \left|s^{(n_p - 1 + \gamma)}\right| \right| 
\end{align}
By using equation \eqref{Eqn:Trapezoidal_main_inequality} we obtain 
\begin{align} 
  c^{(n_p - 1)} > \frac{1}{1 - \gamma} \left|\gamma c^{(n_p)} - (2 \gamma - 1) c^{(n_p)}\right| = c^{(n_p)} 
\end{align}
which is a contradiction as it violates equation \eqref{Eqn:Trapezoidal_cm_leq_cnk}. This 
implies that the sequence $\left(c^{(n)}\right)_{n \in \mathbb{N}}$ is bounded, and so should 
be the sequence $\left(s^{(n)}\right)_{n \in \mathbb{N}}$. This completes the proof. 
\end{proof}
\begin{remark}
  \label{Remark:DD_Remark_on_gamma_leq_dot5}
  The result proved in Proposition \ref{Proposition:DD_Proposition_on_sn_gamma}, in general, 
  cannot be extended to $0 < \gamma \leq 1/2$. Counterexamples for the cases $0 < \gamma 
  < 1/2$ and $\gamma = 1/2$ are given in figures \ref{Fig:DD_Unstable_sequence_gamma_leq_dot5} 
  and \ref{Fig:DD_Unstable_sequence_gamma_dot5}, respectively. As discussed in Remark 
  \ref{Remark:Trapezoidal_forward_Euler_ill_posed}, the forward Euler $(\gamma = 0)$ is 
  not well-posed under the $\boldsymbol{d}$-continuity method. 
  This implies that, under the $\boldsymbol{d}$-continuity domain decomposition method, many of 
  the popular time stepping schemes from the generalized trapezoidal family (e.g., the forward 
  Euler and midpoint rule) are unstable. The midpoint rule $(\gamma = 1/2)$ is unconditionally 
  stable for linear first-order ODEs. But for linear index-2 DAEs, the midpoint rule can be 
  unstable (and is right on the boundary of the instability region). 
\end{remark}

\begin{figure}
  \psfrag{0}{$0$}
  \psfrag{1}{$1$}
  \psfrag{2}{$2$}
  \psfrag{3}{$3$}
  \psfrag{4}{$4$}
  \psfrag{n}{$n$}
  \psfrag{g}{$\gamma$}
  \psfrag{gl}{$\gamma < 1/2$}
  \psfrag{s0}{$s^{(0)}$}
  \psfrag{s1}{$s^{(1)}$}
  \psfrag{s2}{$s^{(2)}$}
  \psfrag{s3}{$s^{(3)}$}
  \psfrag{sn}{$s^{(n)}$}
  \psfrag{sngamma}{$s^{(n + \gamma)}, \forall n$}
  \includegraphics[scale=0.7]{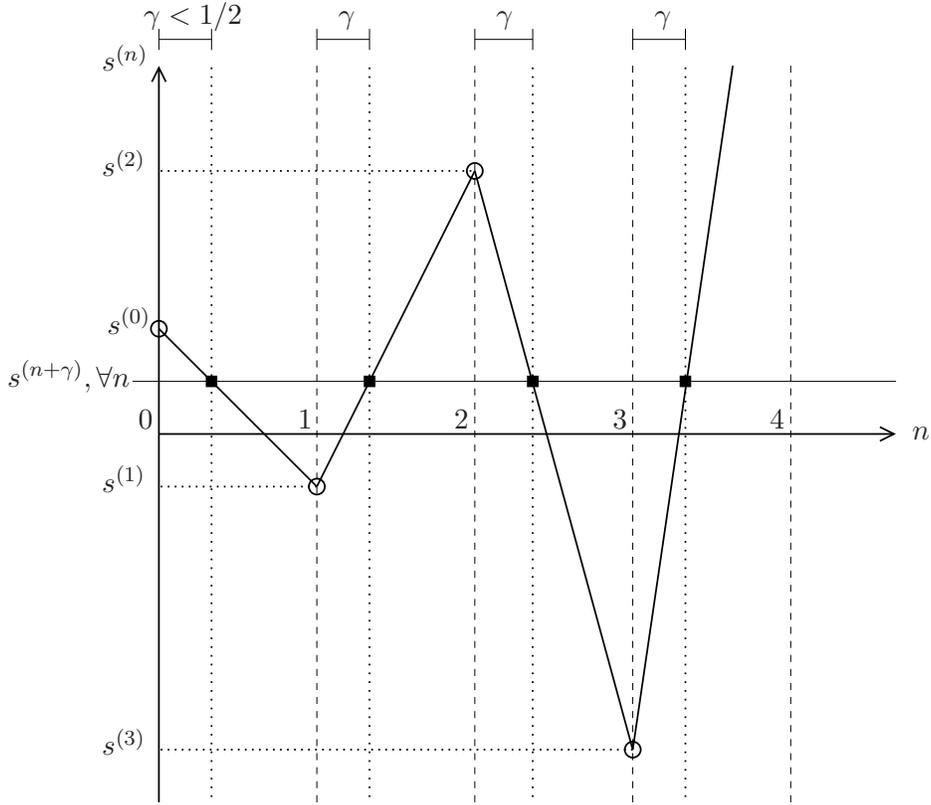}
  \caption{A counterexample for the case $0 \leq \gamma < 1/2$. The figure presents an example 
    in which the (constant) sequence $(s^{(n + \gamma)})_{n \in \mathbb{N}}$ is bounded but 
    $(s^{(n)})_{n \in \mathbb{N}}$ is unbounded. The elements of $(s^{(n+ \gamma)})_{n \in 
      \mathbb{N}}$ are indicated using filled squares, and those of $(s^{(n)})_{n \in 
      \mathbb{N}}$ are indicated using circles. 
    Recall that $s^{(n + \gamma)} := (1 - \gamma) s^{(n)} + \gamma s^{(n+1)}, \; \forall n$. 
  \label{Fig:DD_Unstable_sequence_gamma_leq_dot5}} 
\end{figure}

\begin{figure}
  \begin{align*}
    \begin{array}{c|cccccc} 
      n & 0 & 1 & 2 & 3 & 4 & \cdots \\ \hline
      s^{(n + 1/2)} & +1 & -1 & +1 & -1 & +1 & \cdots \\
      s^{(n)} & 0 & +2 & -4 & +6 & -8 & \cdots  
    \end{array}
  \end{align*}
  \caption{A counterexample for $\gamma = 1/2$. The sequence $\left(s^{(n + 1/2)}\right)_{n \in \mathbb{N}}, 
    \; s^{(n + 1/2)} := \left(s^{(n)} + s^{(n+1)}\right)/2,$ is bounded but the sequence 
    $\left(s^{(n)}\right)_{n \in \mathbb{N}}$ is unbounded. \label{Fig:DD_Unstable_sequence_gamma_dot5}}
\end{figure}

Returning to the proof of stability of the $\boldsymbol{d}$-continuity domain decomposition method, 
we have proved that $\boldsymbol{v}_i^{(n + \gamma)}$ and $\boldsymbol{\lambda}^{(n + \gamma)}$ are 
bounded $\forall i$ and $\forall n$. Applying Proposition \ref{Proposition:DD_Proposition_on_sn_gamma} 
for individual components of the vectors $\boldsymbol{v}_{i}^{(n + \gamma)}$ and 
$\boldsymbol{\lambda}^{(n + \gamma)}$, we can conclude that for $1/2 < \gamma \leq 1$ the vectors 
$\boldsymbol{v}_i^{(n)}$ and $\boldsymbol{\lambda}^{(n)}$ are bounded $\forall i$ and $\forall n$. 
As shown mathematically in Remark \ref{Remark:DD_Remark_on_gamma_leq_dot5}, the quantities 
$\boldsymbol{v}_i^{(n)}$ and $\boldsymbol{\lambda}^{(n)}$ may not be bounded when $0 \leq \gamma 
\leq 1/2$. In a later subsection we will present physical systems that, in fact, exhibit this kind 
of (numerical) unbounded behavior for $\gamma \leq 1/2$ under the $\boldsymbol{d}$-continuity method. 
This completes the stability analysis of the $\boldsymbol{d}$-continuity method. 

\subsection{Stability of modified $\boldsymbol{d}$-continuity method}
\label{Subsec:DD_stability_modified_d_continuity}
The majority of the proof for this method is identical to the initial part of the proof for 
the $\boldsymbol{d}$-continuity method (presented in subsection \ref{Subsec:DD_d_continuity}) 
up to the step where we deduced that the vectors $\boldsymbol{\lambda}^{(n+ \gamma)}$ and 
$\boldsymbol{v}_i^{(n+\gamma)}$ are bounded. The only additional thing we have to prove is 
the boundedness of the vectors $\boldsymbol{\lambda}^{(n)}$ and $\boldsymbol{v}_i^{(n)}$, 
which is a direct consequence of the triangle inequality. To wit, using equation 
\eqref{Eqn:DD_modified_lambda_interpolation} (and also see Figure 
\ref{Fig:DD_interpolation_lambda}) we have 
\begin{align}
  \label{Eqn:DD_triangle_inequality}
  \left\|\boldsymbol{\lambda}^{(n)}\right\| = 
  \left\|\gamma \boldsymbol{\lambda}^{(n-1+\gamma)} + 
    (1 - \gamma) \boldsymbol{\lambda}^{(n + \gamma)}\right\| 
  \leq \gamma \left\|\boldsymbol{\lambda}^{(n-1+\gamma)}\right\| + 
  (1 - \gamma) \left\|\boldsymbol{\lambda}^{(n+\gamma)}\right\| 
\end{align}
Since the vectors $\boldsymbol{\lambda}^{(n+\gamma)}$ are bounded $\forall n$, from the above equation 
it is evident that the Lagrange multipliers at integral time levels $\boldsymbol{\lambda}^{(n)}$ are 
bounded $\forall n$. Using a similar reasoning, since the vectors $\boldsymbol{v}_i^{(n+\gamma)}$ are 
bounded $\forall n$, the rates at the integer time levels $\boldsymbol{v}_i^{(n)}$ are also bounded. 
(Note that the vectors $\boldsymbol{d}_i^{(n)}$ are also bounded, which has been proven in subsection 
\ref{Subsec:DD_d_continuity}.) This means that under this method all quantities of interest are 
bounded. This completes the stability analysis of the modified $\boldsymbol{d}$-continuity method. 

Note that, as discussed in Remark \ref{Remark:Trapezoidal_forward_Euler_ill_posed}, the forward 
Euler $(\gamma = 0)$ is not well-posed under the modified $\boldsymbol{d}$-continuity method. All 
other time integrators from the generalized trapezoidal family $(0 < \gamma \leq 1)$ can be employed 
and are stable under the modified $\boldsymbol{d}$-continuity domain decomposition method. 

\begin{remark} 
  \label{Remark:DD_backward_Euler_d_continuity}
  For the $\boldsymbol{d}$-continuity and modified $\boldsymbol{d}$-continuity domain 
  decomposition methods, with backward Euler $(\gamma = 1)$ one can achieve the continuity 
  of both temperatures and temperature rates along the subdomain interface. To wit, the 
  $\boldsymbol{d}$-continuity constraints (which are basically the continuity of temperatures 
  along the interface, and are given by equation \eqref{Eqn:Trapezoidal_d_continuity_DD_kinematic} 
  or \eqref{Eqn:Trapezoidal_modified_d_continuity_DD_kinematic}) imply 
  \begin{align}
    \sum_{i = 1}^{S} \boldsymbol{C}_i \left[\boldsymbol{d}_i^{(n-1)}\right] = \boldsymbol{0}, \; \forall n
  \end{align}
  For backward Euler we have $\left[\boldsymbol{d}_i^{(n-1)}\right] = \Delta t \boldsymbol{v}_i^{(n)}$. 
  This implies the continuity of rates along the subdomain interface.
  \begin{align}
    \sum_{i = 1}^{S} \boldsymbol{C}_i \boldsymbol{v}_i^{(n)} = \boldsymbol{0}, \; \forall n
  \end{align}
\end{remark}

\subsection{Stability of the $\boldsymbol{v}$-continuity method}
\label{Subsec:DD_stability_v_continuity}
We follow a similar procedure employed in subsection \ref{Subsec:DD_d_continuity}. Using equations 
\eqref{Eqn:Trapezoidal_d_continuity_DD_domain}-\eqref{Eqn:Trapezoidal_d_continuity_DD_trapezoidal} 
(and as usual neglecting the external forcing function for stability analysis \cite{Hughes}; i.e., 
$\boldsymbol{f}_i^{(n)} = \boldsymbol{0}$) one obtains 
\begin{align}
  \sum_{i = 1}^{S} \frac{1}{2} \left[{\boldsymbol{v}^{(n)}_i}^{\mathrm{T}}
    \boldsymbol{A}_i \boldsymbol{v}^{(n)}_i \right] 
  + \sum_{i=1}^{S} \Delta t {\left\{\boldsymbol{v}_{i}^{(n)}\right\}}^\mathrm{T} 
  \boldsymbol{K}_i \left\{\boldsymbol{v}^{(n)}_i\right\}
  = \sum_{i=1}^{S} {\left\{\boldsymbol{v}_i^{(n)}\right\}}^{\mathrm{T}} 
  \boldsymbol{C}_i^{\mathrm{T}} \left[\boldsymbol{\lambda}^{(n)}\right]
\end{align}
Since  the matrices $\boldsymbol{K}_i \; (i = 1, \cdots , S)$ are positive 
semidefinite, and $\Delta t > 0$; we conclude 
\begin{align}
  \sum_{i = 1}^{S} \frac{1}{2} \left[{\boldsymbol{v}^{(n)}_i}^{\mathrm{T}}
    \boldsymbol{A}_i \boldsymbol{v}^{(n)}_i \right] 
  \leq \sum_{i = 1}^{S} {\left\{\boldsymbol{v}^{(n)}_i \right\}}^{\mathrm{T}} \boldsymbol{C}_i^{\mathrm{T}} 
  \left[\boldsymbol{\lambda}^{(n)} \right] 
  = {\left[\boldsymbol{\lambda}^{(n)}\right]}^{\mathrm{T}} \sum_{i=1}^{S} 
  \boldsymbol{C}_i \left\{\boldsymbol{v}^{(n)}_i\right\} 
\end{align}
Using the $\boldsymbol{v}$-continuity given by equation 
\eqref{Eqn:Trapezoidal_v_continuity_DD_kinematic} we conclude that 
\begin{align}
 \sum_{i = 1}^{S} \frac{1}{2} \left[{\boldsymbol{v}^{(n)}_i}^{\mathrm{T}}
   \boldsymbol{A}_i \boldsymbol{v}^{(n)}_i \right] \leq 0 \quad \forall n
\end{align}
Using the definition and linearity of the jump operator, we conclude 
\begin{align}
  \sum_{i=1}^{S}{\boldsymbol{v}_i^{(n+1)}}^{\mathrm{T}}\boldsymbol{A}_{i} \boldsymbol{v}_i^{(n+1)} \leq 
  \sum_{i=1}^{S} {\boldsymbol{v}_i^{(n)}}^{\mathrm{T}}\boldsymbol{A}_{i} \boldsymbol{v}_i^{(n)} 
  \leq \cdots \leq 
  \sum_{i=1}^{S} {\boldsymbol{v}_i^{(0)}}^{\mathrm{T}}\boldsymbol{A}_{i} \boldsymbol{v}_i^{(0)} 
\end{align}
Since $\forall i = 1, \cdots, S$ the initial vectors $\boldsymbol{v}_i^{(0)}$ are 
bounded and the matrices $\boldsymbol{A}_i$ are positive definite; we conclude that 
$\boldsymbol{v}_i^{(n)}$ are bounded $\forall n$. This implies that, from the trapezoidal 
equation \eqref{Eqn:Trapezoidal_v_continuity_DD_trapezoidal}, $\left[\boldsymbol{d}_i^{(n)}
\right]$ is bounded. 

One can further show that, under the $\boldsymbol{v}$-continuity method, the jump in the 
Lagrange multipliers $\left[\boldsymbol{\lambda}^{(n)} \right]$ is also bounded. There are 
several ways to prove this result, which was the case even under the $\boldsymbol{d}$-continuity 
method for proving the boundedness of the Lagrange multipliers $\boldsymbol{\lambda}^{(n+\gamma)}$ 
(see subsection \ref{Subsec:DD_d_continuity}). As mentioned earlier, herein we take a slightly 
different approach (than the one in subsection \ref{Subsec:DD_d_continuity}), and which is also 
applicable for the Baumgarte stabilized method for showing a similar result. 
To this end, we start with 
\begin{align}
  \label{Eqn:DD_v_continuity_step4}
  \boldsymbol{M}_i \left[\boldsymbol{v}_i^{(n)}\right] + \boldsymbol{K}_i \left[\boldsymbol{d}_i^{(n)}\right] = 
  \boldsymbol{C}_i^{\mathrm{T}} \left[\boldsymbol{\lambda}^{(n)}\right], \quad \forall i
\end{align}
(Note that in the above equation we have used the fact that the external force on all 
subdomains is zero, $\boldsymbol{f}_i^{(n)} = \boldsymbol{0}$.) By premultiplying the 
above equation with $\boldsymbol{C}_i \tilde{\boldsymbol{M}}_i^{-1}$, and then summing 
over the number of subdomains, we get 
\begin{align}
  \label{Eqn:DD_v_continuity_step5}
  \left[\boldsymbol{\lambda}^{(n)}\right] = \boldsymbol{G}^{-1} 
  \sum_{i}^{S} \boldsymbol{C}_i \tilde{\boldsymbol{M}}_i^{-1} 
  \left(\boldsymbol{M}_i \left[\boldsymbol{v}_i^{(n)}\right] + 
    \boldsymbol{K}_i \left[\boldsymbol{d}_i^{(n)}\right] \right)
\end{align}
where the matrices $\tilde{\boldsymbol{M}}_i$ and $\boldsymbol{G}$ are defined in 
equations \eqref{Eqn:DD_d_continuity_M_i_tilde} and \eqref{Eqn:DD_d_continuity_G}, 
respectively. Since the vectors $\left[\boldsymbol{d}_i^{(n)}\right]$ and 
$\boldsymbol{v}_i^{(n)}$ (and hence $\left[\boldsymbol{v}_i^{(n)}\right]$) are bounded 
$\forall n$, from equation \eqref{Eqn:DD_v_continuity_step5}, we conclude that 
$\left[\boldsymbol{\lambda}^{(n)}\right]$ is bounded $\forall n$. 
\begin{remark}
  In order to show the above result (that the vector $\left[\boldsymbol{\lambda}^{(n)} \right]$ 
  is bounded), in the step just above equation \eqref{Eqn:DD_v_continuity_step5}, one can 
  premultiply with any matrix of the form $\boldsymbol{C}_i \boldsymbol{H}_i$ where 
  $\boldsymbol{H}_i$ is some positive definite matrix. 
  Since $\tilde{\boldsymbol{M}}_i^{-1}$ is a positive definite matrix (as discussed in 
  Remark \ref{Remark:DD_d_continuity_M_i_tilde_G_positive_definite} that the matrix 
  $\tilde{\boldsymbol{M}}_i$ is positive definite), the choice $\boldsymbol{H}_i = 
  \tilde{\boldsymbol{M}}_i^{-1}$ is valid. We have chosen this particular choice as 
  to be able to use some of the earlier results (e.g., the matrix $\boldsymbol{G}$ 
  is invertible), and to avoid introducing additional notation.
\end{remark}

For the $\boldsymbol{v}$-continuity domain decomposition method, solely based on the energy 
method, one cannot infer the boundedness of the quantities $\boldsymbol{d}_i^{(n)}$ and 
$\boldsymbol{\lambda}^{(n)}$. Also, there can be drift in the continuity of temperatures 
along the subdomain interface. That is, $\sum_{i = 1}^{S} \boldsymbol{C}_i \boldsymbol{d}_i^{(n)} 
\neq \boldsymbol{0}$. This drift may grow over time due to round-off errors. As discussed 
earlier, one of the ways to control the drift is to employ the Baumgarte stabilization. In 
the next subsection, we discuss the stability of the Baumgarte constraint stabilization in 
the context of index-2 linear first-order transient systems.  
\begin{remark}
  Note that, unlike the $\boldsymbol{d}$-continuity and modified $\boldsymbol{d}$-continuity 
  methods, one can employ the forward Euler method $(\gamma = 0)$ in individual subdomains 
  under the $\boldsymbol{v}$-continuity domain decomposition method. 
\end{remark}

\subsection{Stability of Baumgarte stabilized domain decomposition method}
\label{Subsec:Trapezoidal_Baumgarte_theory}
We start the stability analysis by rewriting the kinematic constraints. To this end, 
equation \eqref{Eqn:Trapezoidal_Baumgarte_kinematic} implies that 
\begin{align}
  \sum_{i=1}^{S} \boldsymbol{C}_i \left(\left[\boldsymbol{v}_i^{(n)}\right] + 
    \frac{\alpha}{\Delta t} \left[\boldsymbol{d}_i^{(n)}\right] \right) = \boldsymbol{0}
\end{align}
By using the kinematic constraints given by equation \eqref{Eqn:Trapezoidal_Baumgarte_kinematic} 
and identity \eqref{Eqn:Trapezoidal_x_gamma} we get 
\begin{align}
  \label{Eqn:Trapezoidal_final_Baumgarte_constraints}
  \sum_{i=1}^{S} \boldsymbol{C}_i \left(\alpha^{*} \left[\boldsymbol{v}_i^{(n)}\right] + 
    \alpha \left\{\boldsymbol{v}_i^{(n)}\right\}\right) = \boldsymbol{0}
\end{align}
where the (dimensionless) parameter $\alpha^{*}$ is introduced for convenience, 
and is defined as 
\begin{align}
  \label{Eqn:Trapezoidal_alpha_star}
  \alpha^{*} := 1 + \alpha \left(\gamma - \frac{1}{2}\right)
\end{align}

We now rewrite the subdomain equation in a manner similar to what we did above for the 
kinematic constraints. Equation \eqref{Eqn:Trapezoidal_Baumgarte_domain} implies that
\begin{align}
  \label{Eqn:Trapezoidal_equilibrium_jump_form}
  \boldsymbol{M}_i \left[\boldsymbol{v}_i^{(n)}\right] + \boldsymbol{K}_i \left[\boldsymbol{d}_i^{(n)}\right] = 
  \boldsymbol{C}_i^{\mathrm{T}} \left[\boldsymbol{\lambda}^{(n)}\right]
\end{align}
By using the trapezoidal equation \eqref{Eqn:Trapezoidal_Baumgarte_trapezoidal} and employing 
the identity \eqref{Eqn:Trapezoidal_x_gamma} we get 
\begin{align}
  \boldsymbol{M}_i \left[\boldsymbol{v}_i^{(n)}\right] + \Delta t \boldsymbol{K}_i 
  \left( \left(\gamma - \frac{1}{2}\right) \left[\boldsymbol{v}_i^{(n)}\right] + 
    \left\{\boldsymbol{v}_i^{(n)}\right\}\right) = \boldsymbol{C}_i^{\mathrm{T}} 
  \left[\boldsymbol{\lambda}^{(n)}\right] 
\end{align}

Premultiplying both sides of the above equation by $\left(\alpha^{*} \left[\boldsymbol{v}_i^{(n)}
  \right] + \alpha \left\{\boldsymbol{v}_i^{(n)}\right\}\right)$ and then summing over all of the 
subdomains we get 
\begin{align}
  &\sum_{i=1}^{S} \left(\alpha^{*} \left[\boldsymbol{v}_i^{(n)}\right] + \alpha 
    \left\{\boldsymbol{v}_i^{(n)}\right\}\right)^{\mathrm{T}} \left(\boldsymbol{M}_i 
  \left[\boldsymbol{v}_i^{(n)}\right] + \Delta t \boldsymbol{K}_i 
  \left(\left(\gamma - 1/2\right) \left[\boldsymbol{v}_i^{(n)}\right] + 
    \left\{\boldsymbol{v}_i^{(n)}\right\}\right)\right)\notag \\
  &= \sum_{i=1}^{S} \left(\alpha^{*} \left[\boldsymbol{v}_i^{(n)}\right] + \alpha 
    \left\{\boldsymbol{v}_i^{(n)}\right\}\right)^{\mathrm{T}} \boldsymbol{C}_i^{\mathrm{T}} 
  \left[\boldsymbol{\lambda}^{(n)}\right] 
  = \left[\boldsymbol{\lambda}^{(n)}\right]^{\mathrm{T}} \sum_{i=1}^{S} \boldsymbol{C}_i 
  \left(\alpha^{*} \left[\boldsymbol{v}_i^{(n)}\right] + \alpha \left\{\boldsymbol{v}_i^{(n)}\right\}\right)    
\end{align}
For the Baumgarte stabilized domain decomposition method, using equation 
\eqref{Eqn:Trapezoidal_final_Baumgarte_constraints}, we conclude that 
\begin{align}
  \sum_{i=1}^{S} \left(\alpha^{*} \left[\boldsymbol{v}_i^{(n)}\right] + \alpha 
    \left\{\boldsymbol{v}_i^{(n)}\right\}\right)^{\mathrm{T}} \left(\boldsymbol{M}_i 
  \left[\boldsymbol{v}_i^{(n)}\right] + \Delta t \boldsymbol{K}_i 
  \left(\left(\gamma - 1/2\right) \left[\boldsymbol{v}_i^{(n)}\right] + 
    \left\{\boldsymbol{v}_i^{(n)}\right\}\right)\right) = 0
\end{align}
Using the definition of $\alpha^{*}$ (equation \eqref{Eqn:Trapezoidal_alpha_star}), 
the above equation can be rewritten as 
\begin{align}
  \sum_{i=1}^{S} \left(\alpha^{*} \left[\boldsymbol{v}_i^{(n)}\right] + \alpha 
    \left\{\boldsymbol{v}_i^{(n)}\right\}\right)^{\mathrm{T}} \boldsymbol{M}_i 
  \left[\boldsymbol{v}_i^{(n)}\right] 
  + \Delta t \sum_{i=1}^{S} \left[\boldsymbol{v}_i^{(n)}\right]^{\mathrm{T}} 
  \boldsymbol{K}_i \left(\left(\gamma - 1/2\right) \left[\boldsymbol{v}_i^{(n)}\right] + 
      \left\{\boldsymbol{v}_i^{(n)}\right\}\right) \notag \\
  + \alpha \Delta t \sum_{i=1}^{S} \left((\gamma - 1/2) \left[\boldsymbol{v}_i^{(n)}\right] + 
    \left\{\boldsymbol{v}_i^{(n)}\right\}\right)^{\mathrm{T}} \boldsymbol{K}_i 
    \left(\left(\gamma - 1/2\right) \left[\boldsymbol{v}_i^{(n)}\right] + 
      \left\{\boldsymbol{v}_i^{(n)}\right\}\right) = 0
\end{align}
Since $\alpha > 0$, $\Delta t > 0$ and $\boldsymbol{K}_i$ is positive semidefinite, we conclude 
\begin{align}
  \sum_{i=1}^{S} \left(\alpha^{*} \left[\boldsymbol{v}_i^{(n)}\right] + \alpha 
    \left\{\boldsymbol{v}_i^{(n)}\right\}\right)^{\mathrm{T}} \boldsymbol{M}_i 
  \left[\boldsymbol{v}_i^{(n)}\right] 
  + \Delta t \sum_{i=1}^{S} \left[\boldsymbol{v}_i^{(n)}\right]^{\mathrm{T}} 
  \boldsymbol{K}_i \left(\left(\gamma - 1/2\right) \left[\boldsymbol{v}_i^{(n)}\right] + 
      \left\{\boldsymbol{v}_i^{(n)}\right\}\right) \leq 0
\end{align}
By invoking the symmetry of $\boldsymbol{M}_i$, using equation \eqref{Eqn:Trapezoidal_alpha_star}, 
and rearranging the terms we get 
\begin{align}
  \label{Eqn:New_DD_Baum_stab_step5}
  \sum_{i=1}^{S} \left[\boldsymbol{v}_i^{(n)}\right]^{\mathrm{T}} 
  \left(\alpha \boldsymbol{M}_i + \Delta t \boldsymbol{K}_i\right) 
  \left\{\boldsymbol{v}_i^{(n)}\right\} 
  + \sum_{i=1}^{S} \left[\boldsymbol{v}_i^{(n)}\right]^{\mathrm{T}} 
  \boldsymbol{\tilde{A}}_i \left[\boldsymbol{v}_i^{(n)}\right] \leq 0
\end{align}
where the matrix $\boldsymbol{\tilde{A}}_i$ is defined as 
\begin{align}
  \label{Eqn:New_DD_definition_of_A_i_tilde}
  \boldsymbol{\tilde{A}}_i := \alpha^{*} \boldsymbol{M}_i + \Delta t 
  \left(\gamma - \frac{1}{2}\right) \boldsymbol{K}_i
\end{align}

If the matrices $\tilde{\boldsymbol{A}}_i \; (i = 1, \cdots, S)$ are positive semidefinite 
(later we will obtain sufficient conditions for the matrix $\tilde{\boldsymbol{A}}_i$ to be 
positive semidefinite) then equation \eqref{Eqn:New_DD_Baum_stab_step5} implies that 
\begin{align}
  \sum_{i=1}^S \left[\boldsymbol{v}_i^{(n)}\right]^{\mathrm{T}} 
  \left(\alpha \boldsymbol{M}_i + \Delta t \boldsymbol{K}_i\right) 
  \left\{\boldsymbol{v}_i^{(n)}\right\} \leq 0
\end{align}
By invoking identity \eqref{Eqn:Trapezoidal_Symmetric_S} we have 
\begin{align}
  &\sum_{i=1}^S \left[\left(\boldsymbol{v}_i^{(n)}\right)^{\mathrm{T}} 
    \left(\alpha \boldsymbol{M}_i + \Delta t \boldsymbol{K}_i\right) 
    \boldsymbol{v}_i^{(n)}\right] \leq 0 \quad \forall n
\end{align}
This implies that 
\begin{align}
  \sum_{i=1}^S \left(\boldsymbol{v}_i^{(n+1)}\right)^{\mathrm{T}} \left(\alpha \boldsymbol{M}_i 
    + \Delta t \boldsymbol{K}_i\right) \boldsymbol{v}_i^{(n+1)} \leq \cdots \leq
  \sum_{i=1}^S \left(\boldsymbol{v}_i^{(0)}\right)^{\mathrm{T}} 
  \left(\alpha \boldsymbol{M}_i + \Delta t \boldsymbol{K}_i\right) 
  \boldsymbol{v}_i^{(0)} 
\end{align}
Since $\alpha > 0$, $\boldsymbol{M}_i$ is positive definite, and $\boldsymbol{K}_i$ is 
positive semidefinite; the matrix $\alpha \boldsymbol{M}_i + \Delta t \boldsymbol{K}_i$ 
is positive definite. This implies that the vectors $\boldsymbol{v}_i^{(n)}$ are bounded 
$\forall n$ as the (initial) vectors $\boldsymbol{v}_i^{(0)}$ are bounded. From the 
trapezoidal equation \eqref{Eqn:Trapezoidal_Baumgarte_trapezoidal}, one can conclude 
that the vectors $\left[\boldsymbol{d}_i^{(n)}\right]$ are bounded $\forall n$. Using 
the same derivation as presented under the $\boldsymbol{v}$-continuity method, one 
can conclude that the vector $\left[\boldsymbol{\lambda}^{(n)}\right]$ is bounded 
$\forall n$.

Furthermore, one can easily show that under the Baumgarte stabilized method \emph{the 
  drift in the original constraint (i.e., continuity of temperature along the subdomain 
  interface) is also bounded (but may not be zero), which is not the case with the 
  $\boldsymbol{v}$-continuity method}. 
To show this (for convenience) let us work with $1$-norm, and the result (boundedness) 
will hold for other norms also (because of equivalence of norms in finite dimensional 
vector spaces). We start with the $1$-norm of the drift in the original constraint, and 
use equation \eqref{Eqn:Trapezoidal_Baumgarte_kinematic} and the triangle inequality:
\begin{align}
  \label{Eqn:DD_drift_bounded_step_1} 
  \|\sum_{i=1}^{S} \boldsymbol{C}_i \boldsymbol{d}_i^{(n)}\| 
  = \frac{\Delta t}{\alpha} \|\sum_{i=1}^{S} \boldsymbol{C}_i \boldsymbol{v}_i^{(n)}\| 
  \leq \frac{\Delta t}{\alpha} \sum_{i=1}^{S} \|\boldsymbol{C}_i \boldsymbol{v}_i^{(n)}\| 
\end{align}
Now using the definition of matrix-norm, and the boundedness of $\boldsymbol{v}^{(n)}_i$; 
equation \eqref{Eqn:DD_drift_bounded_step_1} can be written as 
\begin{align}
  \label{Eqn:DD_drift_bounded_step_2}
  \|\sum_{i=1}^{S} \boldsymbol{C}_i \boldsymbol{d}_i^{(n)}\| 
  \leq \frac{\Delta t}{\alpha} \sum_{i=1}^{S} \| \boldsymbol{C}_i\| \|\boldsymbol{v}_i^{n}\| 
  \leq \frac{\Delta t}{\alpha} \sum_{i=1}^{S} \|\boldsymbol{v}_i^{(n)}\| < C \quad \forall n
\end{align}
where $C > 0$ is some constant independent of $n$, and we have used the fact that the 
$1$-norm $\|\boldsymbol{C}_i\|_{1} \leq 1$. (Also recall that if $\boldsymbol{C}_i \neq 
\boldsymbol{0}$, which is the case in this paper, $\|\boldsymbol{C}_i\|_{1} = 1$.) Equation 
\eqref{Eqn:DD_drift_bounded_step_2} implies that \emph{the drift in the original constraint 
is bounded $\forall n$}. 

Similar to the $\boldsymbol{v}$-continuity method, even under the Baumgarte stabilized method, 
one cannot infer anything about the boundedness of $\boldsymbol{d}^{(n)}_i$ or Lagrange multipliers 
$\boldsymbol{\lambda}^{(n)}$ solely based on the energy method. But, for many practical problems 
one often gets bounded numerical results for these quantities. The boundedness of the drift 
in the original constraint is the only additional (but very important) feature that the 
Baumgarte stabilized method has compared to the $\boldsymbol{v}$-continuity method. 

We will now obtain sufficient conditions for the matrix $\tilde{\boldsymbol{A}}_i$ to be 
positive semidefinite (which we have assumed in the stability analysis of this method, see 
the line below equation \eqref{Eqn:New_DD_definition_of_A_i_tilde}). 
Using the eigenvectors of the generalized eigenvalue problem \eqref{Eqn:Trapezoidal_GEVP} as 
the basis, the matrix $\tilde{\boldsymbol{A}}_i$ can be diagonalized and obeys the similarity 
\begin{align}
  \label{Eqn:Trapezoidal_similar_matrix}
  \tilde{\boldsymbol{A}}_i \sim \alpha^{*} \boldsymbol{I} + \Delta t 
  \left(\gamma - \frac{1}{2}\right) \boldsymbol{\Omega}_i
\end{align}
In the above equation, the symbol ``$\sim$'' denotes similarity of matrices, and the matrix 
$\boldsymbol{\Omega}_i$ is defined as
\begin{align}
  \boldsymbol{\Omega}_i = \mathrm{diag}\left(\left(\omega_i\right)_1,\cdots, 
    \left(\omega_i\right)_n\right)
\end{align}
where $n$ is the size of the (square) matrix $\tilde{\boldsymbol{A}}_i$; and $\left(\omega_i 
\right)_1,\cdots, \left(\omega_i\right)_n$ are the eigenvalues of the generalized eigenvalue 
problem \eqref{Eqn:Trapezoidal_GEVP}. (It is well-known that similar matrices have the same 
characteristic polynomial, and hence the same eigenvalues \cite{Halmos}.)

Since $\boldsymbol{M}_i$ is positive definite and $\boldsymbol{K}_i$ is positive 
semidefinite, the eigenvalues $\left(\omega_i\right)_1,\cdots,\left(\omega_i\right)_n$ 
are all non-negative. This implies that (using equations \eqref{Eqn:Trapezoidal_alpha_star} 
and \eqref{Eqn:Trapezoidal_similar_matrix}, and noting that $\Delta t > 0$) the matrix 
$\tilde{\boldsymbol{A}}_i$ is positive semidefinite unconditionally for $\gamma \geq 1/2$. 
For $\gamma < 1/2$, sufficient conditions for the matrix $\tilde{\boldsymbol{A}}_i$ to be 
positive semidefinite are 
\begin{align}
  &\alpha^{*} \geq 0 \implies \alpha \leq \frac{1}{|\gamma - 1/2|} \\
  \label{Eqn:Trapezoidal_Baumgarte_t_crit}
  &\Delta t \leq \Delta t_i^{\mathrm{crit}} = \frac{\alpha^{*}}{|\gamma - 1/2| \omega_i^{\mathrm{max}}} = 
  \frac{2}{(1 - 2 \gamma) \omega_i^{\mathrm{max}}} - \frac{\alpha}{\omega_i^{\mathrm{max}}}
\end{align}
\begin{remark}
  Note that in equation \eqref{Eqn:Trapezoidal_Baumgarte_t_crit} $\Delta t_i^{\mathrm{crit}} 
  \geq 0$ as $\alpha^{*} \geq 0$ and $\omega_i^{\mathrm{max}} \geq 0$. 
\end{remark}

These results imply that the use of Baumgarte stabilization decreases the critical step 
for subdomain $i$ by $\alpha/{\omega_i^{\mathrm{max}}}$ relative to the unconstrained case. 
Summarizing the results, sufficient conditions for the stability of Baumgarte stabilized 
domain decomposition method are  
\begin{align}
  \label{Eqn:Trapezoidal_Baumgarte_final_result_leq_dot5}
  0 \leq \gamma < 1/2 & \quad \left\{\begin{array}{l}
      \alpha \leq \frac{1}{|\gamma - 1/2|} \\
      \Delta t_i^{\mathrm{crit}} = 
      \frac{2}{(1 - 2 \gamma) \omega_i^{\mathrm{max}}} - 
      \frac{\alpha}{\omega_i^{\mathrm{max}}} 
    \end{array} \right. \\
  \label{Eqn:Trapezoidal_Baumgarte_final_result_geq_dot5}
  1/2 \leq \gamma \leq 1 & \quad \left\{\begin{array}{l}
      \alpha < + \infty \\
      \Delta t_i^{\mathrm{crit}} = + \infty
    \end{array} \right. 
\end{align}

\begin{table}
  \caption{A summary of stability results \label{Table:DD_stability_results}}
  \begin{tabular}{ll} \hline
    Method & Stability characteristics  \\ \hline
    $\boldsymbol{d}$-continuity & Stable for $1/2 < \gamma \leq 1$. \\ 
    Modified $\boldsymbol{d}$-continuity & Stable for $0 < \gamma \leq 1$. \\ 
    $\boldsymbol{v}$-continuity & Drift on the interface. \\ 
    Baumgarte stabilized & Unconditionally stable for $1/2 \leq \gamma \leq 1$. 
    Stability \\ & conditions for $0 \leq \gamma < 1/2$ are given in Eq. 
    \eqref{Eqn:Trapezoidal_Baumgarte_final_result_leq_dot5}. \\ \hline
  \end{tabular} 
\end{table}

For a quick reference, the stability results derived in this section are 
summarized in Table \ref{Table:DD_stability_results}. In the next section 
we verify the theoretical predictions using numerical experiments. 

\section{NUMERICAL RESULTS}

\subsection{Numerical verification of stability results: Split-degree of freedom}
\label{Subsec:Trapezoidal_Numerical_Verification}
A simple problem that is commonly used to study the performance of coupling algorithms 
is the \emph{split-degree of freedom system}, which can be obtained by splitting a single 
degree of freedom into two single degree of freedom problems but subjected to kinematic 
and dynamic constraints, see Figure \ref{Fig:DD_split_degree_of_freedom}. 

\begin{figure}
  \psfrag{kA}{$k_A$}
  \psfrag{kB}{$k_B$}
  \psfrag{mA}{$m_A$}
  \psfrag{mB}{$m_B$}
  \psfrag{LA}{$+\lambda$}
  \psfrag{LB}{$-\lambda$}
  \psfrag{uA}{$u_A$}
  \psfrag{uB}{$u_B$}
  \includegraphics[scale=0.5]{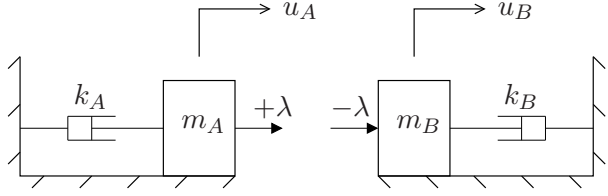}
  \caption{Problem definition: Split-degree of freedom system \label{Fig:DD_split_degree_of_freedom}}
\end{figure}

The time discrete governing equations for the subdomains $A$ and $B$ consists of  
\begin{align}
  m_A v_A^{(n)} + k_A d_A^{(n)} = +\lambda^{(n)}, \quad m_B v_B^{(n)} + k_B d_B^{(n)} = -\lambda^{(n)}
\end{align}
and along with the $\boldsymbol{d}$- or $\boldsymbol{v}$-continuity (kinematic) 
constraints, which are respectively given by
\begin{align}
  d_A^{(n)} - d_B^{(n)} = 0, \quad v_A^{(n)} - v_B^{(n)} = 0 
\end{align}
In addition, we have the equation for the generalized trapezoidal family: 
\begin{align}
  d^{(n)}_{A,B} = d^{(n-1)}_{A,B} + \Delta t \left((1 - \gamma) v^{(n-1)}_{A,B} + \gamma v^{(n)}_{A,B} \right) 
\end{align}

In this subsection, using the split-degree of freedom, we numerically verify the 
derived theoretical stability results for the $\boldsymbol{d}$-continuity and modified 
$\boldsymbol{d}$-continuity methods. The system properties are taken as $m_A = m_B = 1$, 
$k_A = 10$ and $k_B = 1$. The initial temperature is taken to be unity. The time step is 
taken as $\Delta t = 0.01 \; \mathrm{s}$. To show all the aspects of the stability results, 
we consider two different choices of the trapezoidal parameter $\gamma$. 

In the first case we chose $\gamma = 1/4$. For this choice, the critical time steps for 
(unconstrained) subdomains $A$ and $B$ are $0.4 \; \mathrm{s}$ and $4 \; \mathrm{s}$, 
respectively. The obtained numerical results using the $\boldsymbol{d}$-continuity 
method for the rate of temperature, Lagrange multiplier, and temperature are shown 
in Figures \ref{Fig:DD_Split_Degree_vn}, \ref{Fig:DD_Split_Degree_lambda} and 
\ref{Fig:DD_Split_Degree_dn}, respectively. As one can see from the figures, the 
obtained numerical results are unstable in accord with the theory presented in 
subsection \ref{Subsec:DD_d_continuity}. 

For the same case $(\gamma = 1/4)$, in Figure \ref{Fig:DD_Split_Degree_mod_vn_lambda_dot25} 
we have shown the rate of temperature and Lagrange multiplier that are obtained using the 
modified $\boldsymbol{d}$-continuity method. Both the rate of temperature and Lagrange 
multiplier are all bounded even at the integer time levels, which agrees with the theory 
presented in subsection \ref{Subsec:DD_stability_modified_d_continuity}. 

In the second case we chose $\gamma = 3/4$. This time stepping scheme is unconditionally stable 
when applied to ODEs. Even when applied to DAEs, as shown in earlier in this paper, the time 
stepping scheme should be unconditionally stable with respect to all quantities. The obtained 
numerical results for the rate of temperature and Lagrange multiplier are shown in Figure 
\ref{Fig:DD_Split_Degree_vn_lambda_gamma_dot75}, and the temperature is shown in Figure 
\ref{Fig:DD_Split_Degree_dn_gamma_dot75}. Even for this case, the obtained numerical 
results are stable, which agrees with the theoretical predictions.

\subsection{One- and two-dimensional problems, and multiple subdomains}
The model problem is transient linear heat conduction, and the governing equation can 
be written as 
\begin{align}
  \rho c_p \dot{u} - \nabla \cdot \left(k \nabla u\right) = f 
\end{align}
where $u(\boldsymbol{x},t)$ is the temperature, $\nabla$ denotes the spatial gradient operator, 
$f(\boldsymbol{x},t)$ is the volumetric source, $\boldsymbol{x}$ is the position vector (in 2D, 
$\boldsymbol{x} = (x,y)^{\mathrm{T}}$), $k$ is the coefficient of conductivity, $\rho$ is the density, and 
$c_p$ is the coefficient of heat capacity. In all our numerical simulations we have employed the 
standard Galerkin finite element formulation for the spatial discretization \cite{Hughes}. 

We consider two test problems, one each in 1D and 2D. For the 1D problem, a domain of length 
$L = 2.0 \; \mathrm{m}$ is divided into two equal subdomains, and each subdomain is divided 
into $10$ equal linear finite elements. For the 2D problem, a square of size $2 \; \mathrm{m} 
\times 2 \; \mathrm{m}$ is divided into four equal subdomains, and each subdomain is divided 
into $10 \times 10$ square elements (see Figure \ref{Fig:DD_typical_convergence_mesh}). 

For both the test problems we assume homogeneous Neumann boundary conditions, and zero volumetric 
source (i.e., $f = 0$). The analytical solutions for 1D and 2D test problems are, respectively, 
given by 
\begin{align}
  \label{Eqn:DD_1D_analytical}
  u(x,t) &= \exp\left[-\frac{k}{\rho c_p} \frac{\pi^2}{4} t\right] \cos\left(\frac{\pi x}{2}\right) \\
  \label{Eqn:DD_2D_analytical}
  u(x,y,t) &= \exp\left[-\frac{k}{\rho c_p} \frac{\pi^2}{2} t\right] 
  \cos\left(\frac{\pi x}{2}\right) \cos\left(\frac{\pi y}{2}\right)
\end{align}
The problems are driven by prescribed non-zero initial conditions, which are obtained by 
evaluating the analytical solution (given in equations \eqref{Eqn:DD_1D_analytical}-
\eqref{Eqn:DD_2D_analytical}) at time $t = 0$. We use the standard linear and bilinear 
finite elements for 1D and 2D test problems, respectively. We assume the material 
properties to be $k = \rho c_p = 1$. 
 
In Figure \ref{Fig:DD_test_prob_1_d_continuity_gamma_dot25} we have shown the rate 
of temperature from the $\boldsymbol{d}$-continuity method using $\gamma = 1/4$. 
As predicted by the theory, even for the 1D test problem, the numerical results 
are unstable. In Figure \ref{Fig:DD_test_prob_1_d_continuity_gamma_dot75} we have 
shown the temperature and rate of temperature under the $\boldsymbol{d}$-continuity 
using $\gamma = 3/4$, and as predicted by the theory the numerical results are 
stable and match well with the analytical solution. 
In Figure \ref{Fig:DD_test_prob_1_modified_d_continuity}, we have plotted the 
temperature obtained using the modified $\boldsymbol{d}$-continuity method for 
both $\gamma = 1/4$ and $\gamma = 3/4$. For both the values of the Baumgarte 
parameter, the numerical results are stable and match well the analytical solution. 

In Figures \ref{Fig:DD_test_prob_2_d_continuity} and \ref{Fig:DD_test_prob_2_mod_d_continuity} we 
have shown the numerical results for the 2D test problem from the $\boldsymbol{d}$-continuity and 
modified $\boldsymbol{d}$-continuity methods, respectively, for $\gamma = 3/4$ and time step 
$\Delta t = 0.001 \; \mathrm{s}$. For both these methods, as expected, the obtained numerical 
results matched well with the analytical solution. 

\subsubsection{Spatial numerical $h$-convergence analysis}
From the standard approximation theory \cite{Brenner_Scott}, it is well-known that the theoretical 
rate of $h$-convergence for the above test problems (which have smooth solutions) using linear 
finite elements is $2$. For a robust converging domain decomposition method, one would prefer 
to have the numerical rate of convergence to be the same as for the undecomposed problem, which 
for the chosen test problems is $2$. The numerical spatial convergence analysis for linear transient 
heat conduction is typically done either by simultaneously decreasing the time step proportional to 
$h^2$ or by choosing a very small time step so that the error due to the time discretization is negligible. 
In our numerical simulations, we have employed the later approach, and have taken the time step to be 
$\Delta t = 0.00001 \; \mathrm{s}$. We now investigate the numerical rate of $h$-convergence for the 
$\boldsymbol{d}$-continuity and modified $\boldsymbol{d}$-continuity methods. 

The numerical convergence analysis is performed at time level $t = 0.01 \; \mathrm{s}$ 
using a hierarchy of uniform meshes. A typical 2D mesh along with subdomains is shown in 
Figure \ref{Fig:DD_typical_convergence_mesh}. In Figure \ref{Fig:DD_u_v_spatial_convergence}, 
the numerical convergence results for 1D and 2D test problems for the temperature and rate of 
temperature using the $\boldsymbol{d}$-continuity method with the trapezoidal parameter 
$\gamma = 3/4$ are shown. 
In Figure \ref{Fig:DD_modified_d_continuity_spatial_convergence}, we have shown the numerical 
convergence results for the two test problems for the temperature under the modified 
$\boldsymbol{d}$-continuity method using $\gamma = 1/4$ and $\gamma = 3/4$.  
Under both the $\boldsymbol{d}$-continuity and modified $\boldsymbol{d}$-continuity methods 
and for both the test problems, the obtained numerical rate of spatial convergence is $2$, 
which implies the methods are convergent schemes as predicted by the theory. 

\subsection{Baumgarte stabilized domain decomposition method} 
In this subsection we study the numerical performance of the Baumgarte stabilized domain 
decomposition method, and compare numerical results with the theoretical results derived 
in subsection \ref{Subsec:Trapezoidal_Baumgarte_theory}. 

Consider linear transient heat conduction in a one-dimensional bar of length $L = 2 \; \mathrm{m}$ 
with material properties to be $k = 1$ and $\rho c_p = 1$, and the initial temperature to be unity. 
At the left end of the domain the flux is zero, and on the right end we prescribe a constant 
temperature of zero. The analytical solution is given by 
\begin{align}
  u(x,t) = \frac{4}{\pi} \sum_{n=0}^{\infty} \frac{(-1)^n}{(2 n + 1)} 
  \exp\left[- \frac{(2 n + 1)^2 \pi^2 t}{4L}\right]
  \cos\left[\frac{(2n + 1) \pi x}{2L}\right]
\end{align}
The computational domain is divided into two equal subdomains, and each subdomain 
is divided into $10$ equal finite elements. The left subdomain (which has Neumann 
boundary condition on its left end) is denoted as subdomain 1, and the right subdomain 
(which has Dirichlet boundary condition on its right end) is denoted as subdomain $2$.

We consider three different cases, and for the first two cases the trapezoidal parameter 
is taken as $\gamma = 0.1$. For the chosen properties, $\omega_1^{\mathrm{max}} = 1200$ 
and $\omega_2^{\mathrm{max}} = 1178.1$, and the critical time steps for individual 
\emph{unconstrained} subdomains are $2.083 \times 10^{-3} \; \mathrm{s}$ and $2.122 
\times 10^{-3} \; \mathrm{s}$, respectively. The upper bound for the stabilization 
parameter is $2.5$ (see equation \eqref{Eqn:Trapezoidal_Baumgarte_final_result_leq_dot5}), 
and the critical time steps for constrained subdomains are 
\begin{align*}
  \Delta t_1^{\mathrm{max}} = 2.083 \times 10^{-3} \left(2.5 - 2 \alpha \right), \quad 
  \Delta t_2^{\mathrm{max}} = 2.122 \times 10^{-3} \left(2.5 - 2 \alpha \right) 
\end{align*}

In the first case, we have taken $\alpha = 1$, and the critical time steps for individual 
constrained subdomains will be $1.042 \times 10^{-3} \; \mathrm{s}$ and $1.061 \times 10^{-3} 
\; \mathrm{s}$. We have taken the time step as $\Delta t = 0.001 \; \mathrm{s}$, which satisfies 
the stability criteria \eqref{Eqn:Trapezoidal_Baumgarte_final_result_leq_dot5}. In Figure 
\ref{Fig:DD_Baumgarte_temp_stable_alpha} we have shown the numerical results for temperature 
and rate of temperature using $\alpha = 1$. The obtained numerical results are stable as 
predicted by the theory, and also there is no visible drift in the temperature or rate of 
temperature. 

In the second case, we illustrate that (as predicted by the theory) larger Baumgarte parameters 
affect the critical time step, and can make the algorithm unstable. To this end, we have taken 
$\alpha = 2.6$. We have again chosen $\Delta t = 0.001 \; \mathrm{s}$ as it satisfies critical 
time steps for individual \emph{unconstrained} subdomains (and also for $\alpha = 0$). But, 
according to the theory, the chosen time step \emph{may} be unstable for $\alpha = 2.6$. (Note 
that the energy method provides sufficient conditions for stability, and the bounds can be very 
conservative). 
In Figure \ref{Fig:DD_Baumgarte_temp_unstable_alpha} we have shown the obtained numerical 
results for temperature and its rate using the Baumgarte parameter $\alpha = 2.6$. In Figure 
\ref{Fig:DD_Baumgarte_lambda_unstable_alpha} we have plotted the interface Lagrange multiplier 
against time for both $\alpha = 1$ and $\alpha = 2.6$. As one can see, the numerical results 
using $\alpha = 1$ are stable whereas the results using $\alpha = 2.6$ are unstable. From the 
above discussion, we conclude that the Baumgarte parameter does affect the critical time step 
for $\gamma < 1/2$, which is in accord with the theory. Also, from these numerical results one 
can see that the theoretical stability bounds obtained for the Baumgarte stabilized method 
using the energy method are rather sharp. 

In the third case, we have taken $\gamma = 1/2$. For this case, the theory predicts that 
the Baumgarte stabilized domain decomposition method is unconditionally stable, and there 
is no restriction on the choice of the Baumgarte parameter $\alpha > 0$. (Of course, to 
avoid ill-conditioned systems, which may give numerical results that are unreliable, one 
may not use a huge value for $\alpha$.) We choose $\alpha = 2.6$ (which is unstable under 
$\gamma = 0.1$), and also a larger value of $\alpha = 10$. As one can see from Figure 
\ref{Fig:DD_Baumgarte_temp_gamma_dot5}, the numerical results are stable under $\gamma 
= 1/2$ even for higher values of the Baumgarte parameter, which again agrees with the 
theory. 

\section{CONCLUSION}
We have presented four variants of dual substructuring domain decomposition method for 
first-order transient systems. Two of them (the $\boldsymbol{d}$-continuity and modified 
$\boldsymbol{d}$-continuity methods) enforce the continuity of temperatures along the subdomain 
interface. The third one (the $\boldsymbol{v}$-continuity method) enforces the continuity of rate 
of temperature along the subdomain interface. The fourth method (the Baumgarte stabilized 
method) enforces the continuity of a linear combination of the temperature and rate of 
temperature along the subdomain interface. We have assessed their numerical stability and 
accuracy (in particular, drift in the constraints) under the generalized trapezoidal 
family of time integrators. 

Time stepping schemes from the trapezoidal family are primarily developed for solving ODEs, and 
the stability criteria widely used for solving first-order transient problems are (in general) 
not valid for transient domain decomposition methods (as the governing equations are DAEs instead 
of ODEs). For example, under the $\boldsymbol{d}$-continuity method (which enforces the subdomain 
equation and kinematic constraints at integer time levels) we have shown that all time stepping 
schemes with the trapezoidal parameter $0 \leq \gamma \leq 1/2$ are \emph{unconditionally unstable}. 
This is a surprising result, as in the case of ODEs the time stepping schemes with $0 \leq \gamma < 
1/2$ are stable (conditionally) and the mid-point rule with $\gamma = 1/2$ is actually 
unconditionally stable. 

The modified $\boldsymbol{d}$-continuity method enforces the subdomain equation at intermediate time levels, 
and kinematic constraints at integer time levels has been proposed. Under this domain decomposition method, 
we have shown that all the quantities of interest (temperature, rate of temperature, and interface Lagrange 
multipliers) are all bounded provided that the time step is less than the critical time steps of individual 
unconstrained subdomains. 

The stability analysis of the $\boldsymbol{v}$-continuity method based on the energy method reveals 
that the rate of temperatures at integer time levels are bounded. However, solely based on the energy 
method, one cannot infer anything about the boundedness of the temperature and interface Lagrange 
multiplier at integer time levels. One main drawback of the $\boldsymbol{v}$-continuity method is 
that one may get irrecoverable drift in the original constraint (i.e., continuity of temperature), 
which may make the numerical results unphysical. 

We have also performed stability analysis on the Baumgarte stabilization method for first-order 
systems, and derived simple bounds for the user-defined parameter that ensures stability. To our 
knowledge no prior work has derived bounds on the Baumgarte parameter for index-2 DAEs to ensure 
stability in a mathematically rigorous way. 

\section*{APPENDIX: IMPLEMENTATION DETAILS}
In this section we will outline a numerical algorithm for implementing the modified $\boldsymbol{d}$-continuity 
method. One can easily modify this numerical implementation procedure for other domain decomposition methods 
presented in Section \ref{Sec:Trapezoidal_DD_methods}. 

Using equations \eqref{Eqn:Trapezoidal_modified_d_continuity_DD_domain}-\eqref{Eqn:Trapezoidal_modified_d_continuity_DD_trapezoidal}, 
the governing equation for subdomain $i$ can be written as
\begin{align}
  \label{Eqn:Trapezoidal_Appendix_subdomain}
  \tilde{\boldsymbol{M}}_i \boldsymbol{d}_i^{(n+1)} 
  = \left(\boldsymbol{M}_i - (1 - \gamma) \Delta t \boldsymbol{K}_i\right) \boldsymbol{d}_i^{(n)} 
  + \Delta t \boldsymbol{f}_i^{(n + \gamma)} + \Delta t \boldsymbol{C}_i^{\mathrm{T}} \boldsymbol{\lambda}^{(n + \gamma)}
\end{align}
where the matrix $\tilde{\boldsymbol{M}}_i := \boldsymbol{M}_i + \gamma \Delta t \boldsymbol{K}_i$. 
It is easy to see that the matrices $\tilde{\boldsymbol{M}}_i \; (i = 1, \cdots, S)$ are symmetric 
and positive definite. By substituting equation \eqref{Eqn:Trapezoidal_Appendix_subdomain} into 
equation \eqref{Eqn:Trapezoidal_modified_d_continuity_DD_kinematic}, the interface problem can 
be written as 
\begin{align}
  \label{Eqn:Trapezoidal_appendix_interface_problem}
  \boldsymbol{G} \boldsymbol{\lambda}^{(n+\gamma)} = \boldsymbol{R} 
\end{align}
where the Schur complement operator $\boldsymbol{G}$ and interface vector $\boldsymbol{R}$ 
are defined as 
\begin{align}
  &\boldsymbol{G} := \sum_{i=1}^{S} \boldsymbol{C}_i \tilde{\boldsymbol{M}}_i^{-1} 
  \boldsymbol{C}_i^{\mathrm{T}}, \quad 
  \boldsymbol{R} := - \sum_{i=1}^{S} \boldsymbol{C}_i \tilde{\boldsymbol{M}}_i^{-1} \tilde{\boldsymbol{f}}_i^{(n + \gamma)} \\
  &\tilde{\boldsymbol{f}}_i^{(n+\gamma)} := \frac{1}{\Delta t} \left(\boldsymbol{M}_i - 
    (1-\gamma)\Delta t \boldsymbol{K}_i\right) \boldsymbol{d}_i^{(n)} + \boldsymbol{f}_i^{(n+\gamma)}
\end{align}
\begin{remark}
  It is easy to show that the Schur complement operator $\boldsymbol{G}$ is symmetric 
  and positive definite (as, in this paper, we have assumed that there are no redundant 
  constraints). Hence, the interface problem \eqref{Eqn:Trapezoidal_appendix_interface_problem} 
  is solvable and has a unique solution (as $\boldsymbol{G}$ is invertible). 
\end{remark}

Then a simple numerical algorithm for implementing the modified $\boldsymbol{d}$-continuity 
method reads 
\begin{itemize}
\item Solve the interface problem \eqref{Eqn:Trapezoidal_appendix_interface_problem} 
  to obtain the Lagrange multipliers $\boldsymbol{\lambda}^{(n+\gamma)}$. 
\item Using the obtained interface Lagrange multipliers $\boldsymbol{\lambda}^{(n+\gamma)}$, for each 
subdomain $(i = 1, \cdots, S)$ obtain $\boldsymbol{d}_i^{(n+1)}$ by solving the subdomain equation 
\eqref{Eqn:Trapezoidal_Appendix_subdomain}. 
\item Using equation $\eqref{Eqn:Trapezoidal_modified_d_continuity_DD_trapezoidal}_1$, solve for 
$\boldsymbol{v}_i^{(n+\gamma)}$. That is, 
$\boldsymbol{v}_i^{(n+\gamma)} = \frac{1}{\Delta t} \left(\boldsymbol{d}_i^{(n+1)} - \boldsymbol{d}_i^{(n)}\right)$.
\item If desired, calculate the rates $\boldsymbol{v}_i^{(n+1)}$ and Lagrange multipliers $\boldsymbol{\lambda}^{(n+1)}$ at 
integer time levels using equations \eqref{Eqn:DD_modified_v_interpolation} and \eqref{Eqn:DD_modified_lambda_interpolation}, 
respectively.
\end{itemize}
In the FETI method (see \cite{Farhat_Roux_IJNME_1991_v32_p1205}, and also \cite{Nakshatrala_Hjelmstad_Tortorelli_IJNME} 
and references therein), one employs an iterative solver (in this case, one can use the Preconditioned Conjugate Gradient 
method \cite{Golub} as the Schur complement operator $\boldsymbol{G}$ is symmetric and positive definite) for the interface 
problem, and a direct solver for solving the governing equation for individual subdomains.

\section*{ACKNOWLEDGMENTS}
The first author (KBN) would like to thank Professor Albert Valocchi and Professor Daniel 
Tortorelli for their valuable discussions and encouragement. 
The research reported herein was partly supported (KBN) by the Department of Energy through a SciDAC-2 
project (Grant No. DOE DE-FC02-07ER64323). This support is gratefully acknowledged. The opinions expressed 
in this paper are those of the authors and do not necessarily reflect that of the sponsor.

\bibliographystyle{plain}
\bibliography{Books,Master_References}

\newpage
\begin{figure}
  \centering
  \includegraphics[scale=0.4]{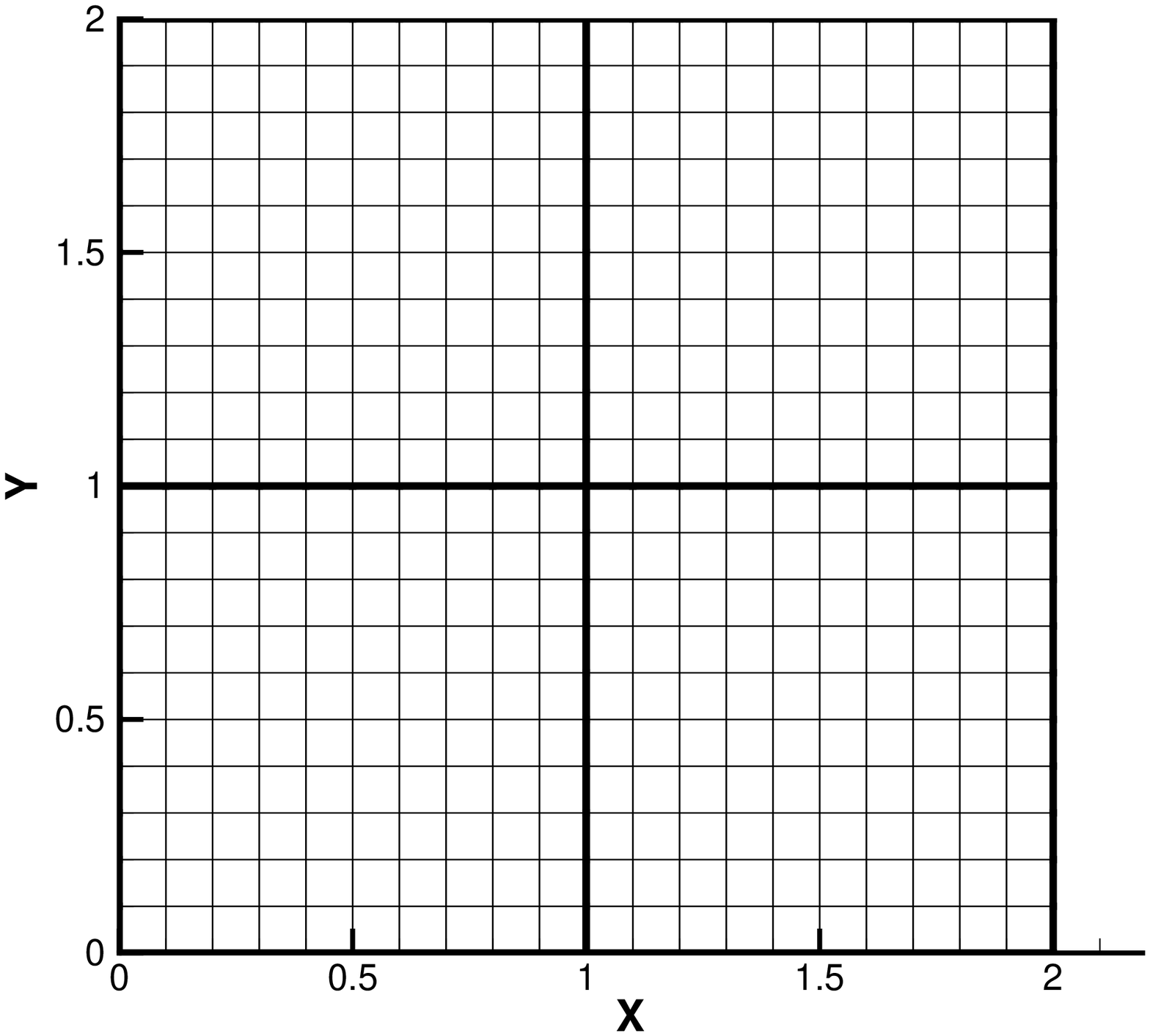}
  \caption{A typical 2D mesh used in the numerical convergence analysis. A computational domain is divided into 
    $4$ subdomains of equal size. The mesh and subdomain interface are shown in the figure. 
    \label{Fig:DD_typical_convergence_mesh}}
\end{figure}

\begin{figure}
  \centering
  \subfigure[]{
    \includegraphics[scale=0.3]{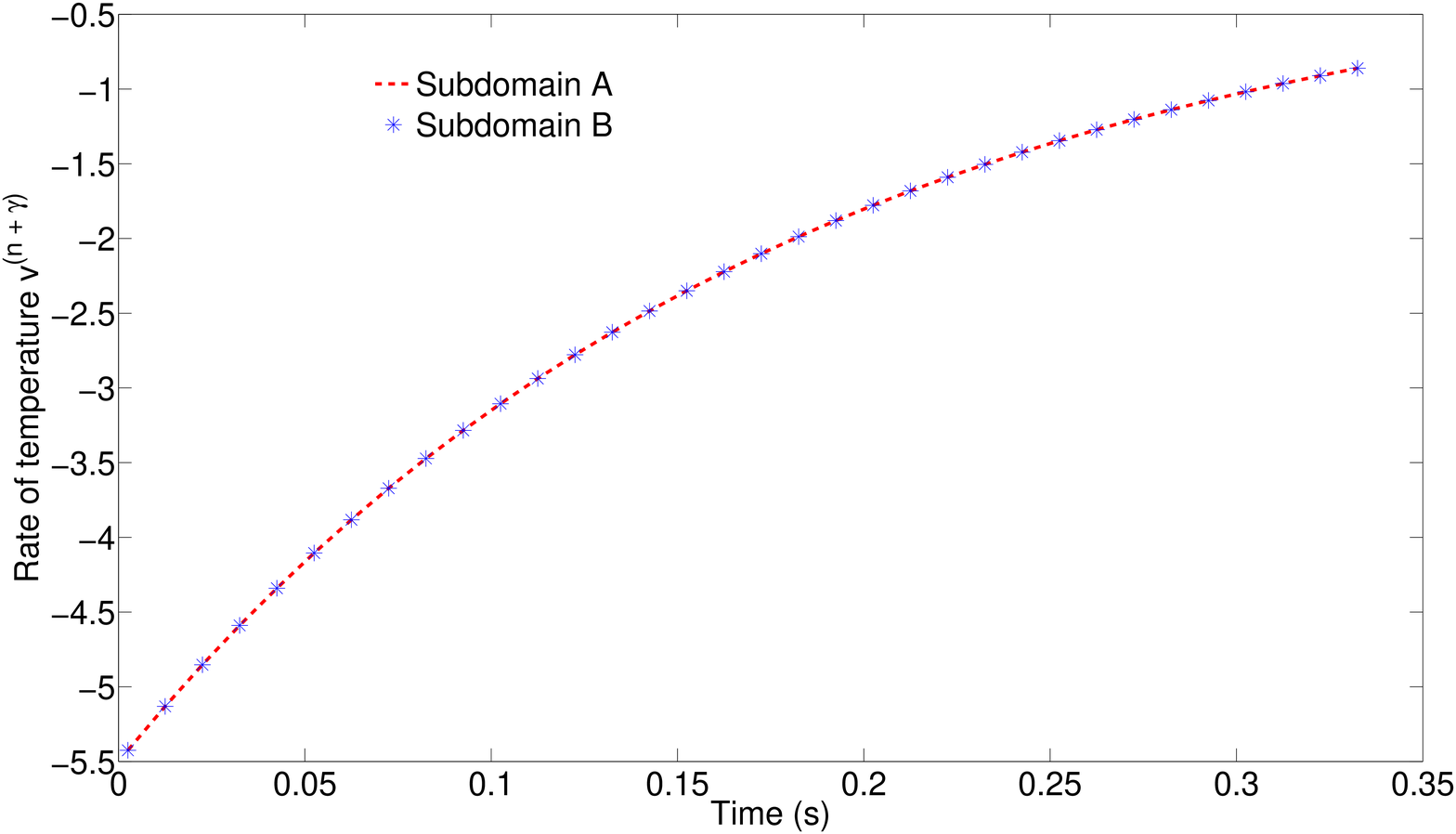}}
  \subfigure[]{
    \includegraphics[scale=0.3]{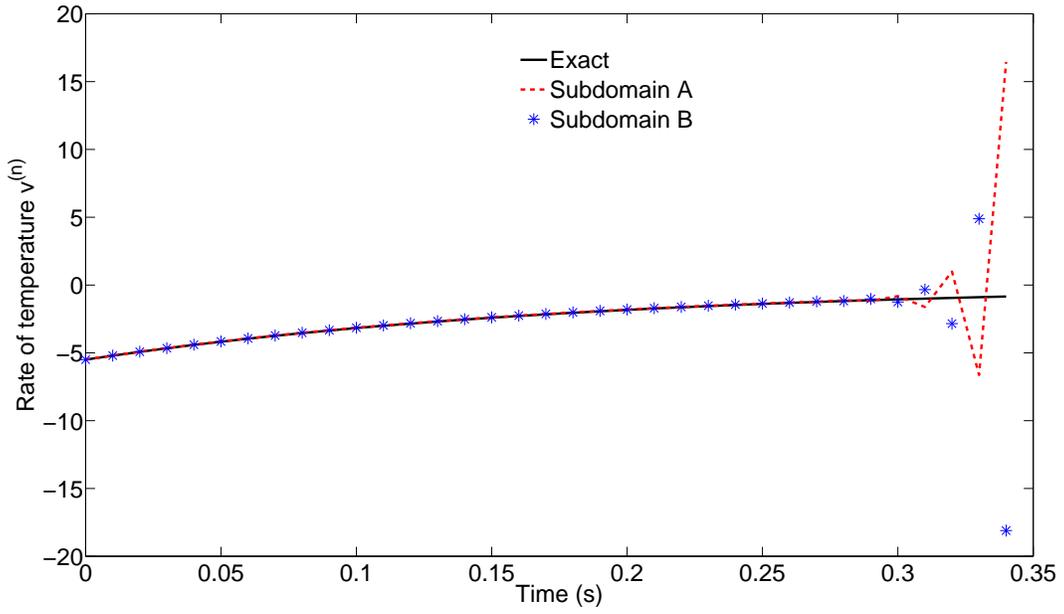}}
  \caption{Split degree of freedom system $(\gamma = 1/4)$: $\boldsymbol{d}$-continuity domain decomposition method. 
    (a) $v_A^{(n + \gamma)}$ and $v_B^{(n + \gamma)}$ are bounded. 
    (b) The quantities $v_A^{(n)}$ and $v_B^{(n)}$ are growing, which is in accord with the theory. 
    \label{Fig:DD_Split_Degree_vn}}
\end{figure}

\begin{figure}
  \centering
  \includegraphics[scale=0.3]{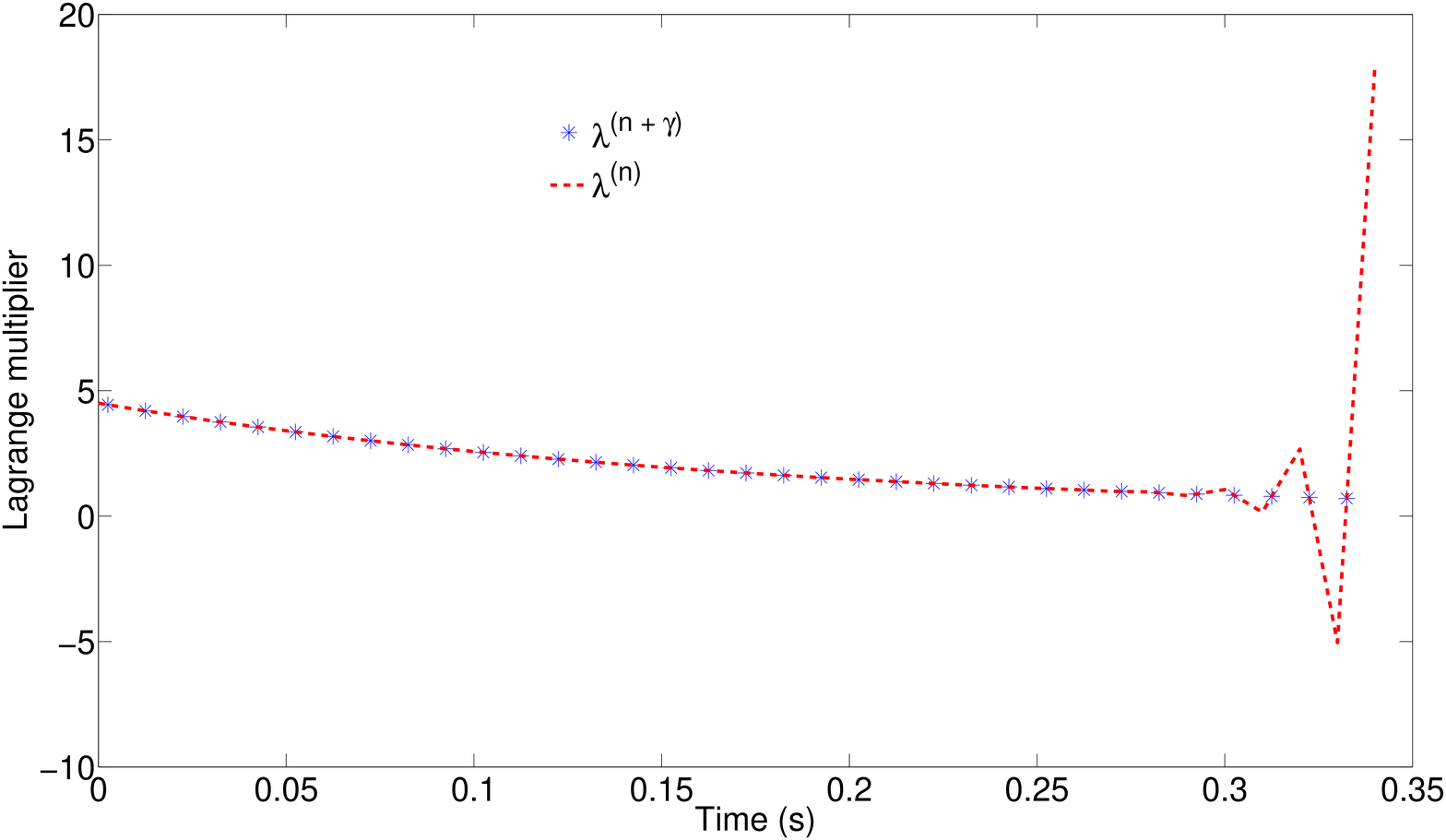}
  \caption{Split degree of freedom system $(\gamma = 1/4)$: $\boldsymbol{d}$-continuity domain decomposition method. 
    The Lagrange multiplier $\lambda^{(n + \gamma)}$ is bounded but $\lambda^{(n)}$ is growing, which 
    is similar to the counterexample presented in Figure 
    \ref{Fig:DD_Unstable_sequence_gamma_leq_dot5}. \label{Fig:DD_Split_Degree_lambda}}
\end{figure}

\begin{figure}
  \includegraphics[scale=0.3]{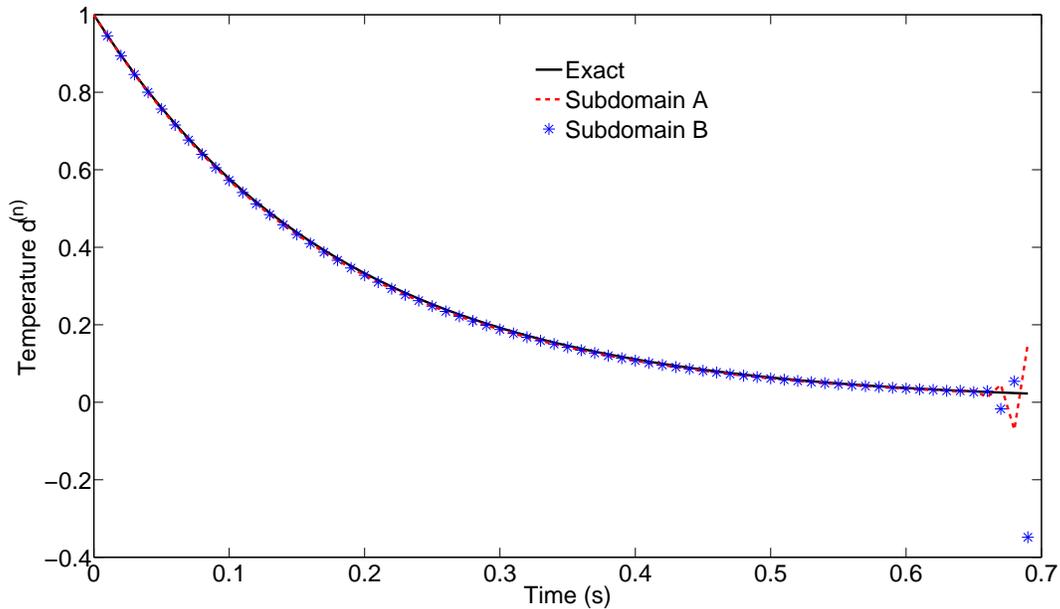}
  \caption{Split degree of freedom system $(\gamma = 1/4)$: $\boldsymbol{d}$-continuity domain decomposition 
    method. The temperatures $d_A^{(n)}$ and $d_B^{(n)}$ are bounded for a while. But due to large (and growing) 
    values of $v_A^{(n)}$, $v_B^{(n)}$ and $\lambda^{(n)}$; the whole numerical algorithm fails after sufficiently 
    long time. Theoretically the quantities $d_A^{(n)}$ and $d_B^{(n)}$ are bounded $\forall n$. But on a computer, 
    both these values also blow up after some time because of finite precision arithmetic. At $t = 0.69 \; \mathrm{s}$, 
    the obtained numerical values are $d_A^{(n)} \approx 0.1517$, $d_B^{(n)} \approx -0.3483$, and $\lambda^{(n)} \approx 8.6457 
    \times 10^{17}$. The exact values for temperature and Lagrange multipliers are, respectively, 
    $d_A^{\mathrm{exact}} = d_B^{\mathrm{exact}}\approx 0.0225$ and $\lambda^{\mathrm{exact}} = 0.1011$. Also note 
    that in the earlier figures the values are plotted up to time $t = 0.35 \; \mathrm{s}$, but in this figure, 
    in order to see appreciable differences, we went up to $t = 0.7 \; \mathrm{s}$.
    \label{Fig:DD_Split_Degree_dn}}
\end{figure}

\begin{figure}
  \centering
  \subfigure{
    \includegraphics[scale=0.3]{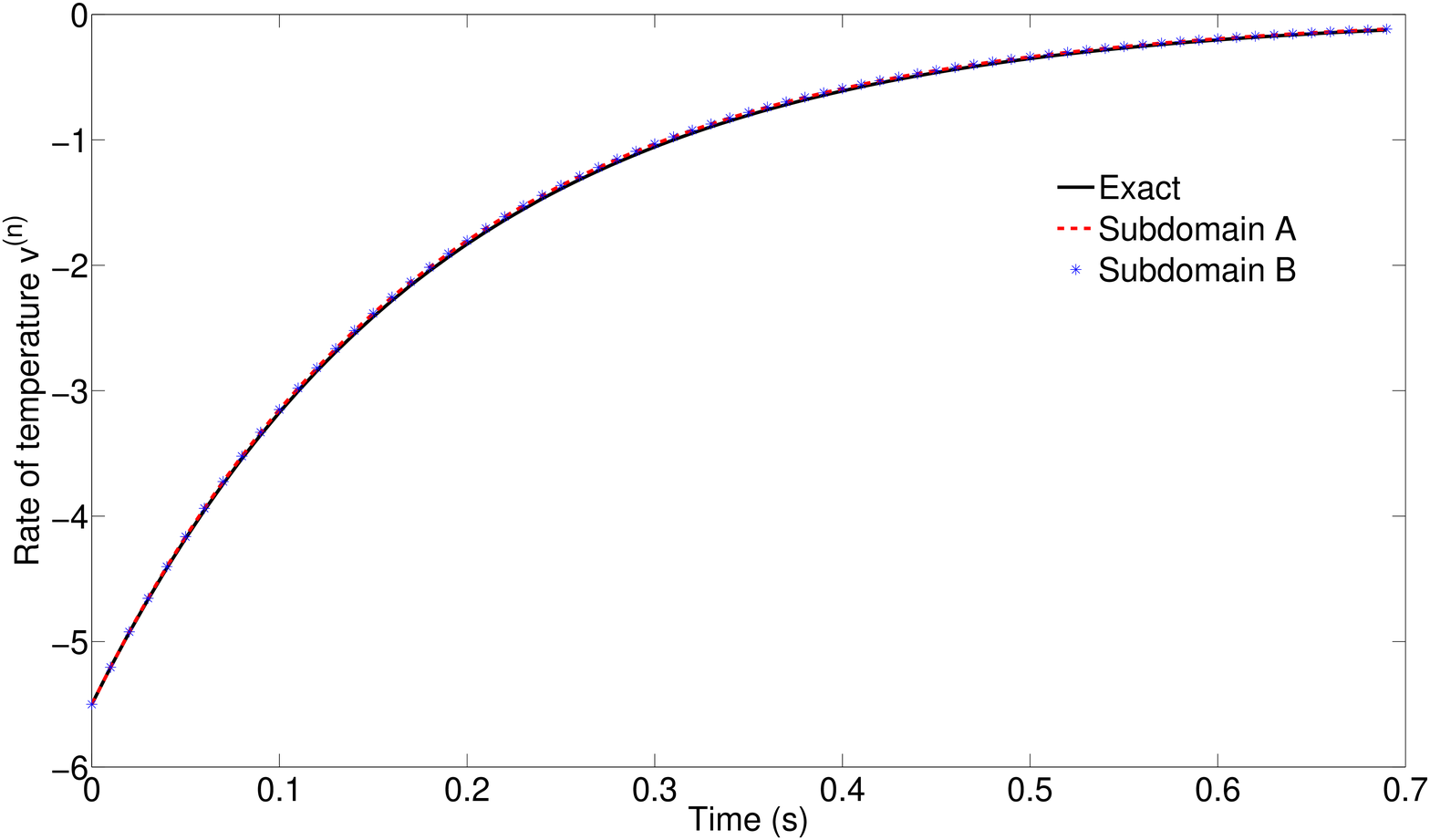}}
  \subfigure{
    \includegraphics[scale=0.3]{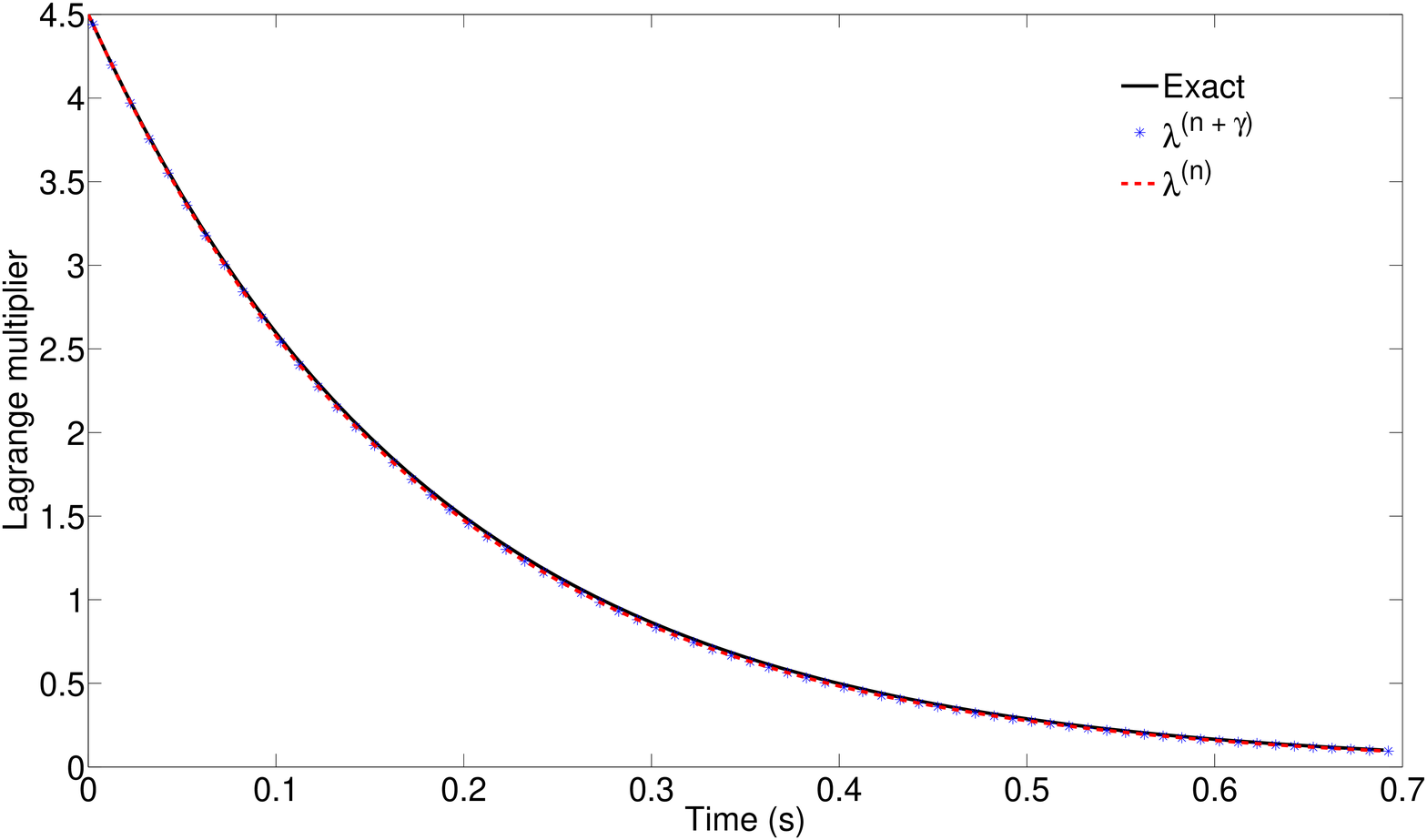}}
  \caption{Split degree of freedom system $(\gamma = 1/4)$: Modified $\boldsymbol{d}$-continuity method. 
    In this figure, the rate of temperature (top) at integral time steps for both subdomains, and the 
    interface Lagrange multiplier (bottom) at both integer and intermediate time levels are plotted. 
    As one can see, all these quantities are bounded, which agrees with the theory. Note that the rate 
    of temperature and Lagrange multiplier at integer time steps are evaluated using equations 
    \eqref{Eqn:DD_modified_v_interpolation} and \eqref{Eqn:DD_modified_lambda_interpolation}, 
    respectively. \label{Fig:DD_Split_Degree_mod_vn_lambda_dot25}}
\end{figure}

\begin{figure}
  \subfigure{
    \centering
    \includegraphics[scale=0.3]{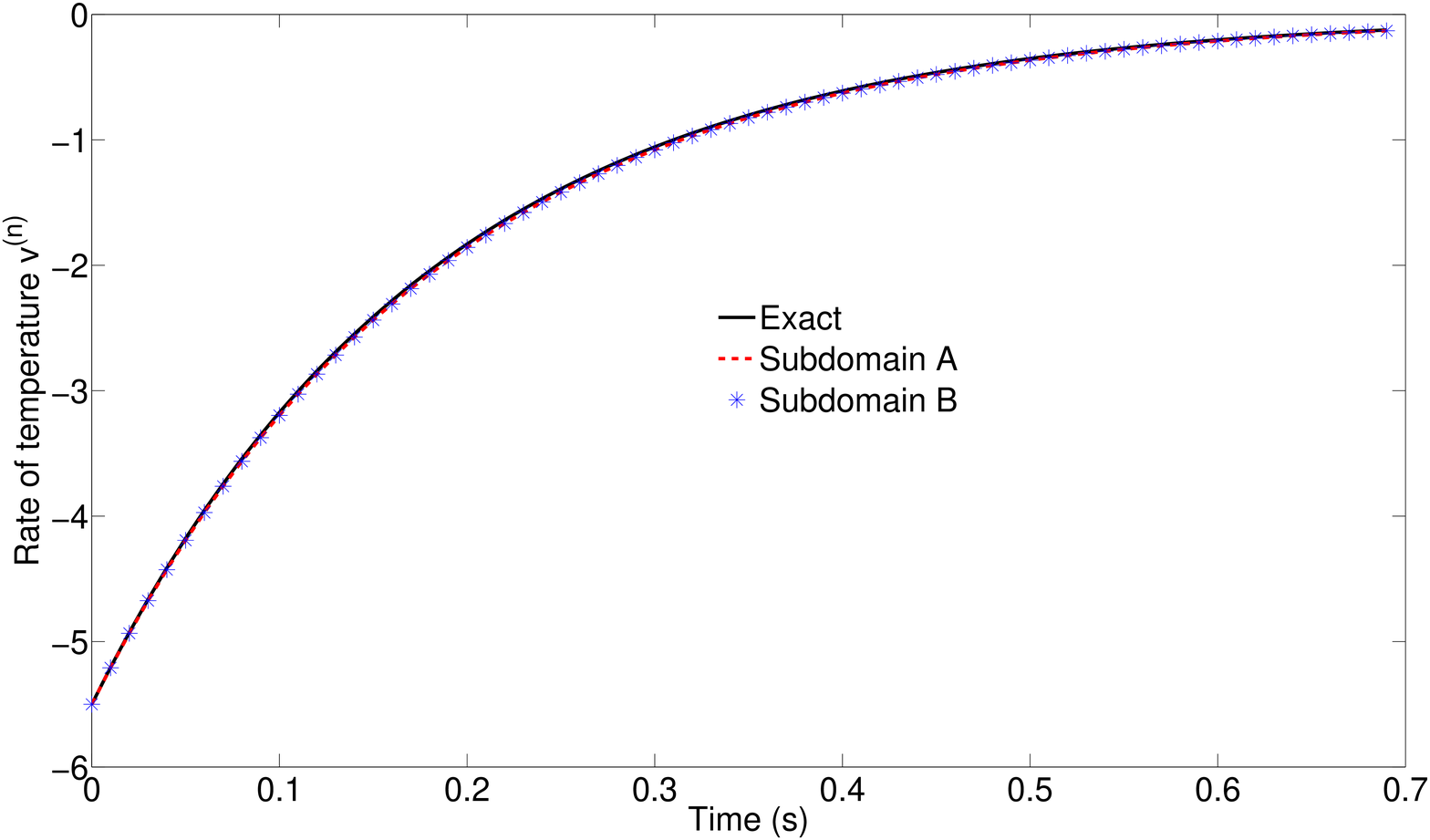}}
  \subfigure{
    \centering
    \includegraphics[scale=0.3]{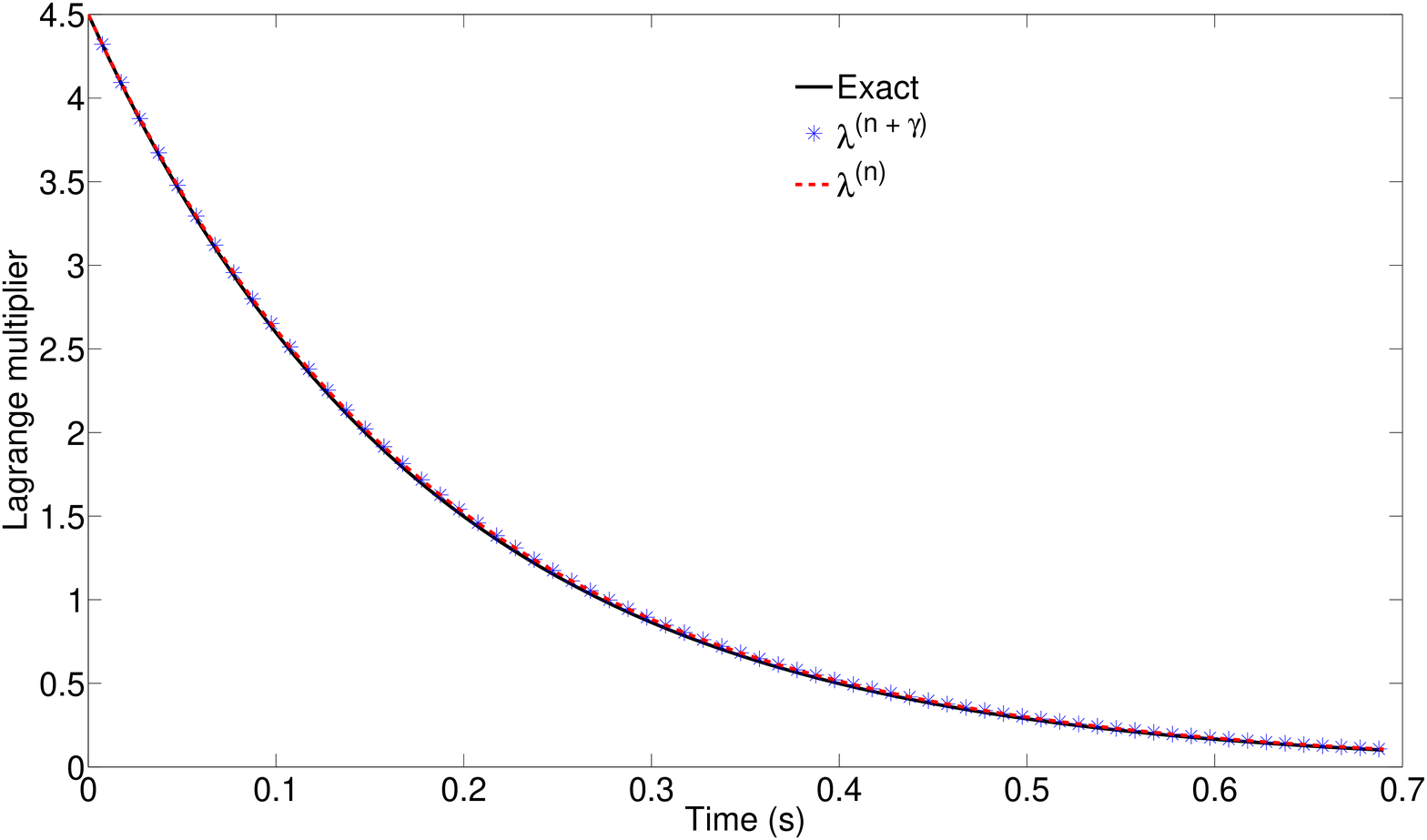}}
  \caption{Split degree of freedom system $(\gamma = 3/4)$: $\boldsymbol{d}$-continuity domain 
    decomposition method. The rate of temperature (top) at integer time levels are plotted for 
    both subdomains (i.e., $v_A^{(n)}$ and $v_B^{(n)}$). The Lagrange multiplier (bottom) at both 
    integer $\lambda^{(n)}$ and weighted $\lambda^{(n + \gamma)}$ time levels are also plotted. All 
    these quantities are bounded, and the numerical results agree with the theory. 
    \label{Fig:DD_Split_Degree_vn_lambda_gamma_dot75}}
\end{figure}

\begin{figure}
  \centering
  \includegraphics[scale=0.3]{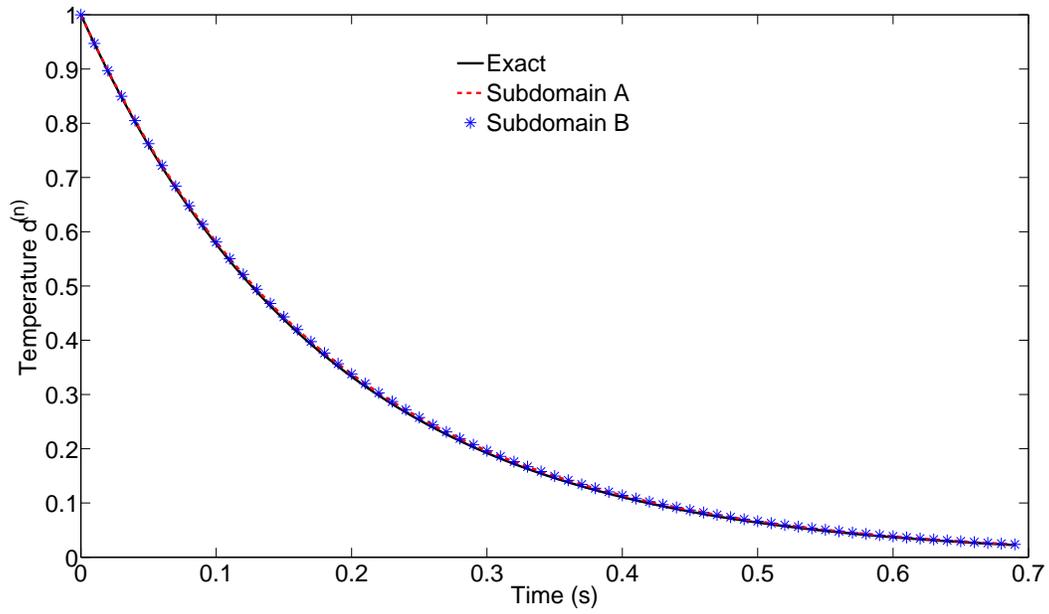}
  \caption{Split degree of freedom system $(\gamma = 3/4)$: $\boldsymbol{d}$-continuity domain decomposition method. 
    For this case, both the temperatures $d_A^{(n)}$ and $d_B^{(n)}$ are bounded. 
    \label{Fig:DD_Split_Degree_dn_gamma_dot75}}
\end{figure}

\begin{figure}
  \centering
  \includegraphics[scale=0.3]{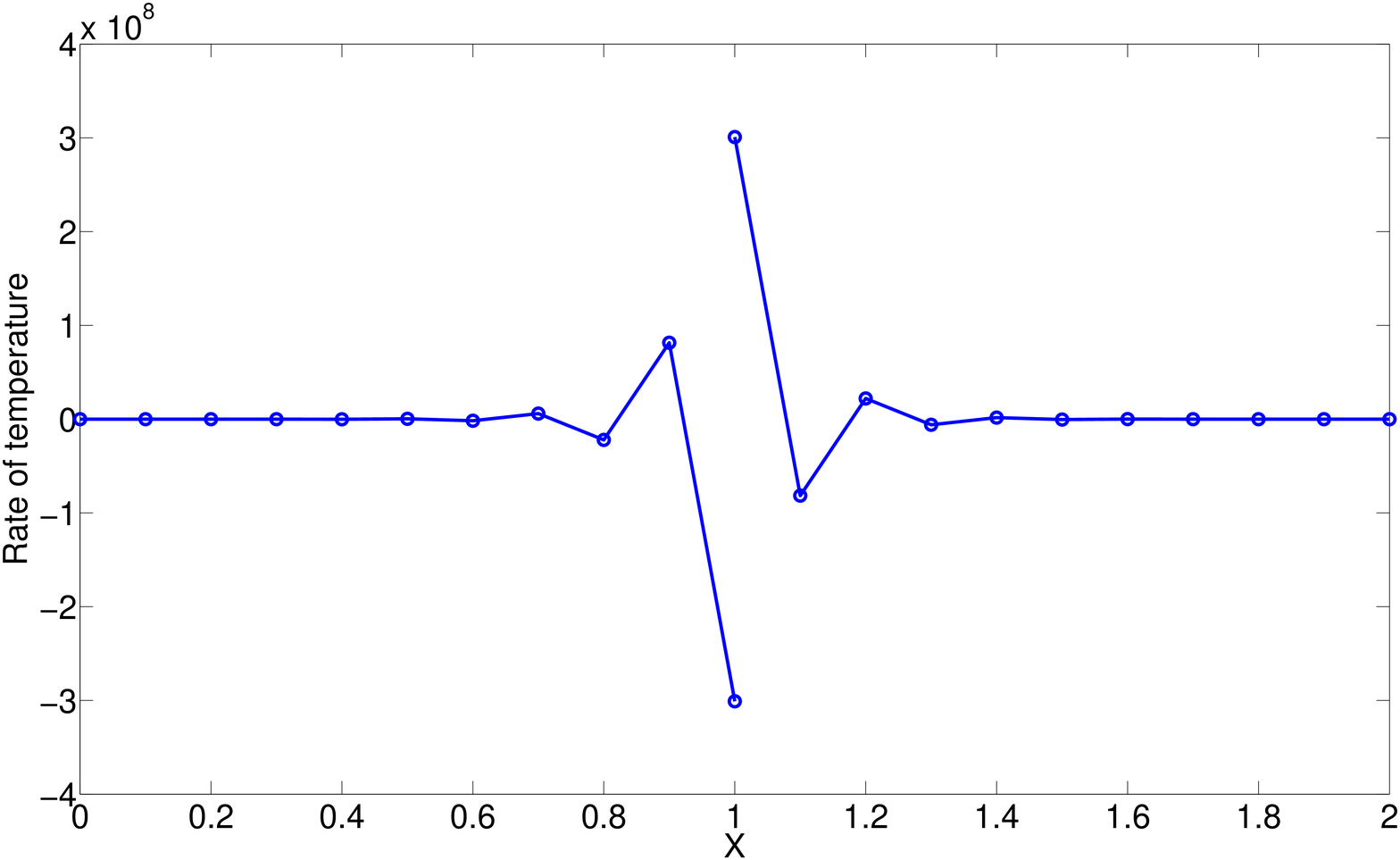}
  \caption{$\boldsymbol{d}$-continuity method: 1D test problem. The trapezoidal parameter is taken as $\gamma = 1/4$, 
    and the time step is taken as $\Delta t = 10^{-5} \; \mathrm{s}$. In this figure, the rate of temperature 
    is plotted against the spatial coordinate at $t = 0.001 \; \mathrm{s}$. As predicted by the theory, 
    the $\boldsymbol{d}$-continuity with $\gamma = 1/4$ is not stable. 
    \label{Fig:DD_test_prob_1_d_continuity_gamma_dot25}}
\end{figure}

\begin{figure}
  \centering
  \subfigure{
    \includegraphics[scale=0.3]{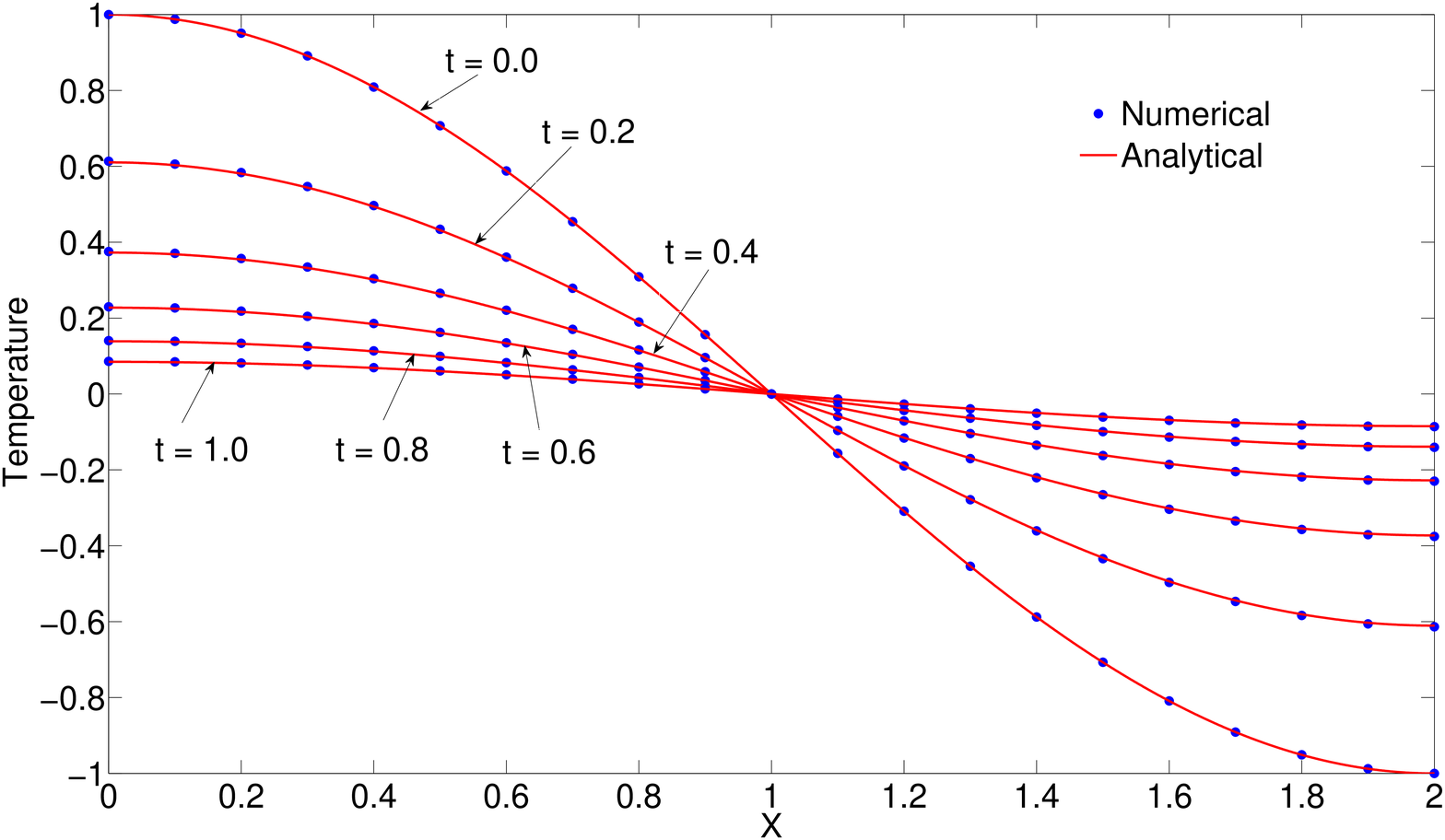}}
  \subfigure{
    \includegraphics[scale=0.3]{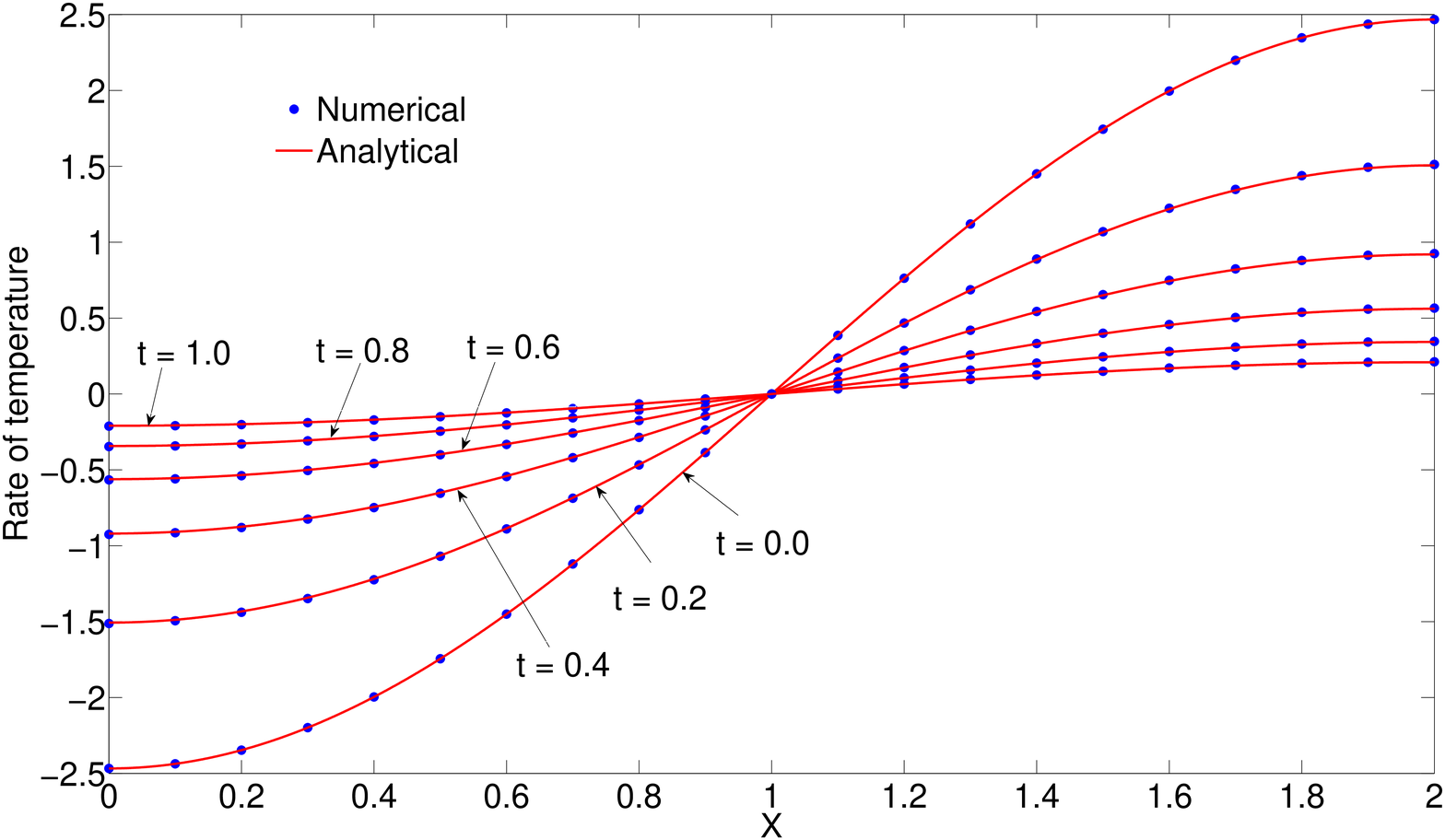}}
  \caption{$\boldsymbol{d}$-continuity method: 1D test problem. The trapezoidal parameter is taken as $\gamma = 3/4$, 
    and the time step is taken as $\Delta t = 0.001 \; \mathrm{s}$. The temperature (top) and rate of temperature 
    (bottom) are plotted against the spatial coordinate at various time levels. 
    \label{Fig:DD_test_prob_1_d_continuity_gamma_dot75}}
\end{figure}

\begin{figure}
  \centering
  \subfigure{
    \includegraphics[scale=0.3]{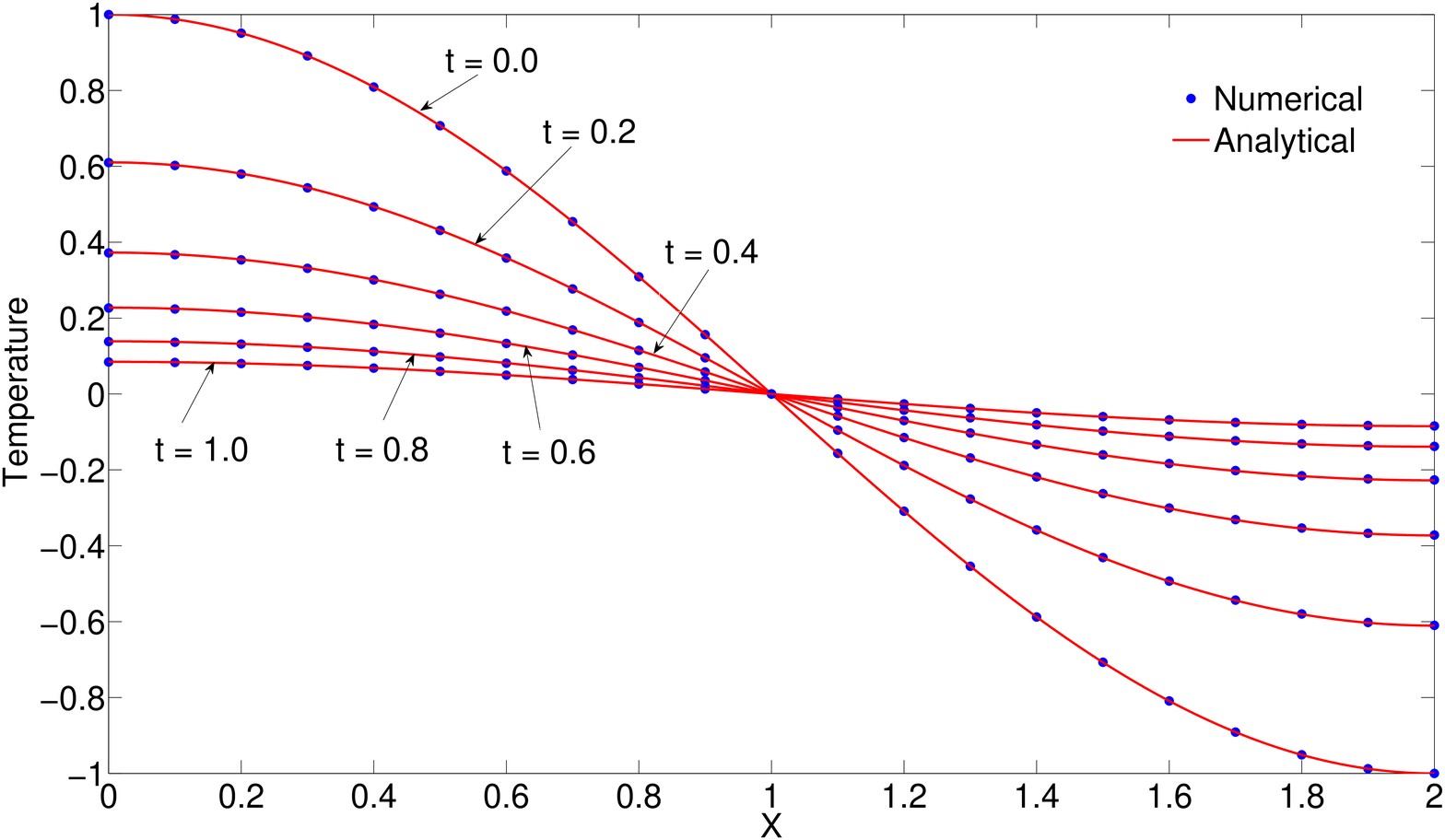}}
  \subfigure{
    \includegraphics[scale=0.3]{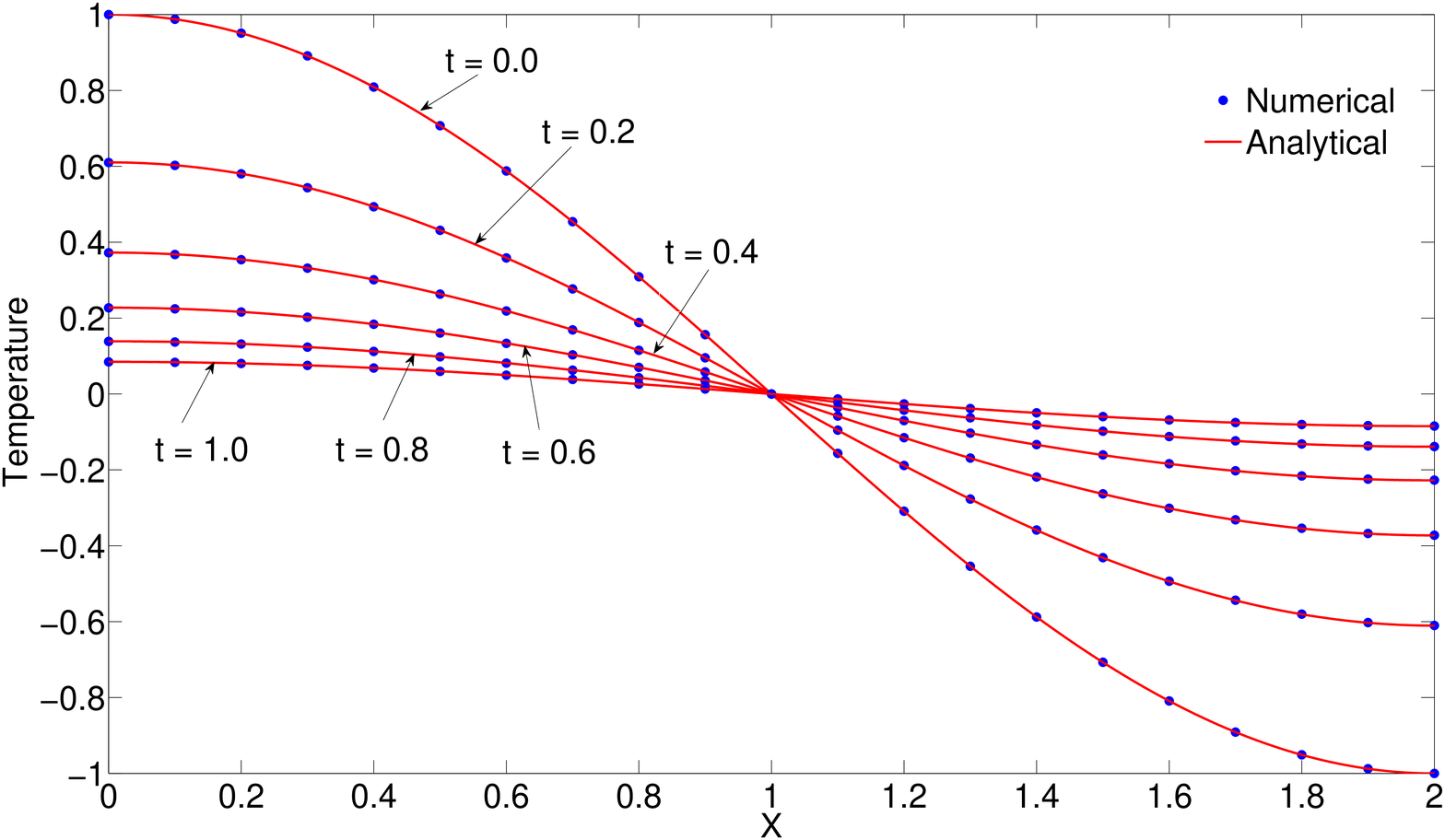}}
  \caption{Modified $\boldsymbol{d}$-continuity method: 1D test problem. The trapezoidal parameter is taken 
    as $\gamma = 1/4$ (top) and $\gamma = 3/4$ (bottom), and the time step is taken as $\Delta t = 0.001 \; 
    \mathrm{s}$. The temperature is plotted against the spatial coordinate at various time levels. Under 
    the modified $\boldsymbol{d}$-continuity method, both the Baumgarte parameters $\gamma = 1/4$ and 
    $\gamma = 3/4$, which is also observed in the numerical simulation of 1D test problem. 
    \label{Fig:DD_test_prob_1_modified_d_continuity}}
\end{figure}

\begin{figure}
  \centering
  \subfigure{
    \includegraphics[scale=0.45]{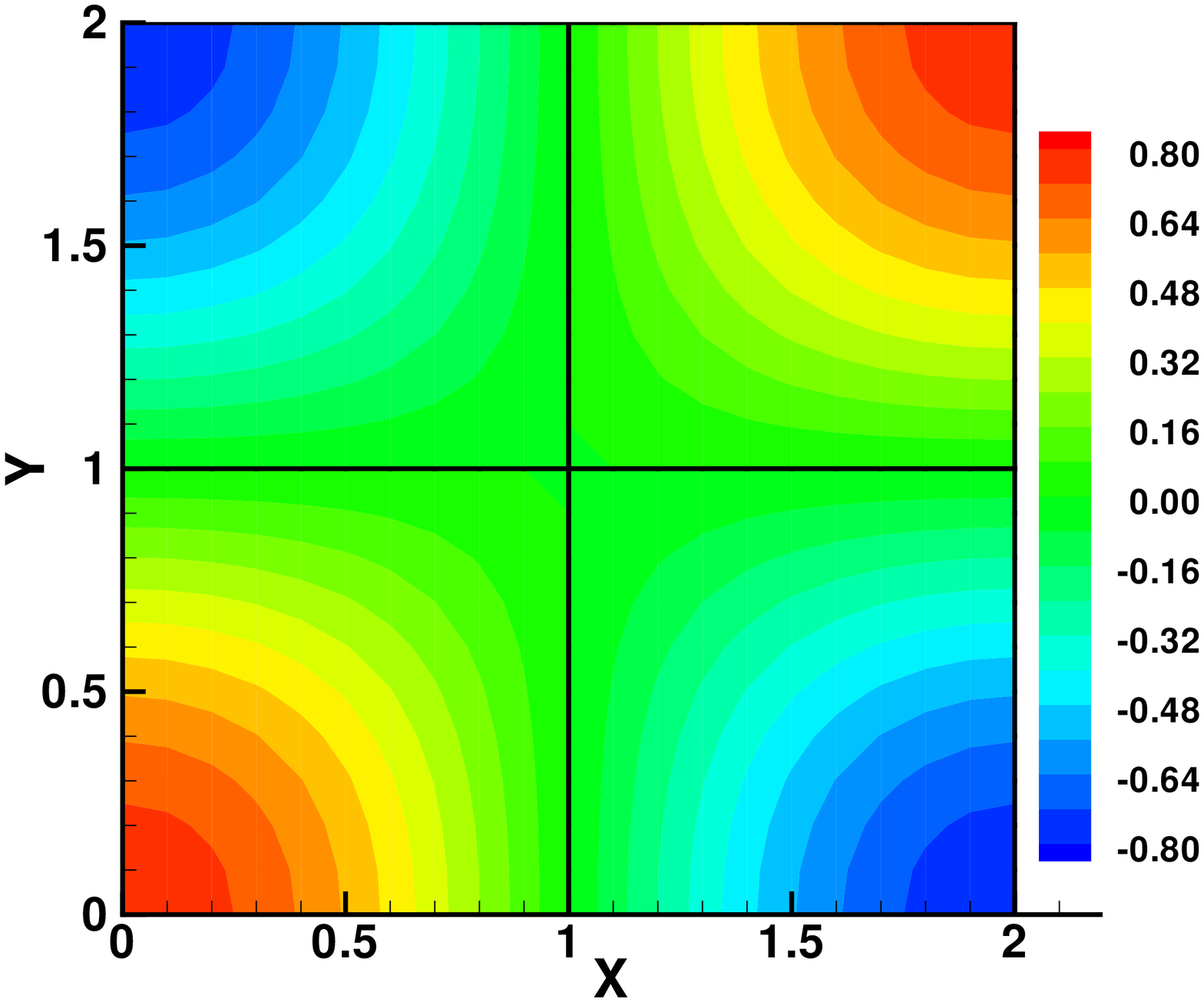}}
  \subfigure{
    \includegraphics[scale=0.45]{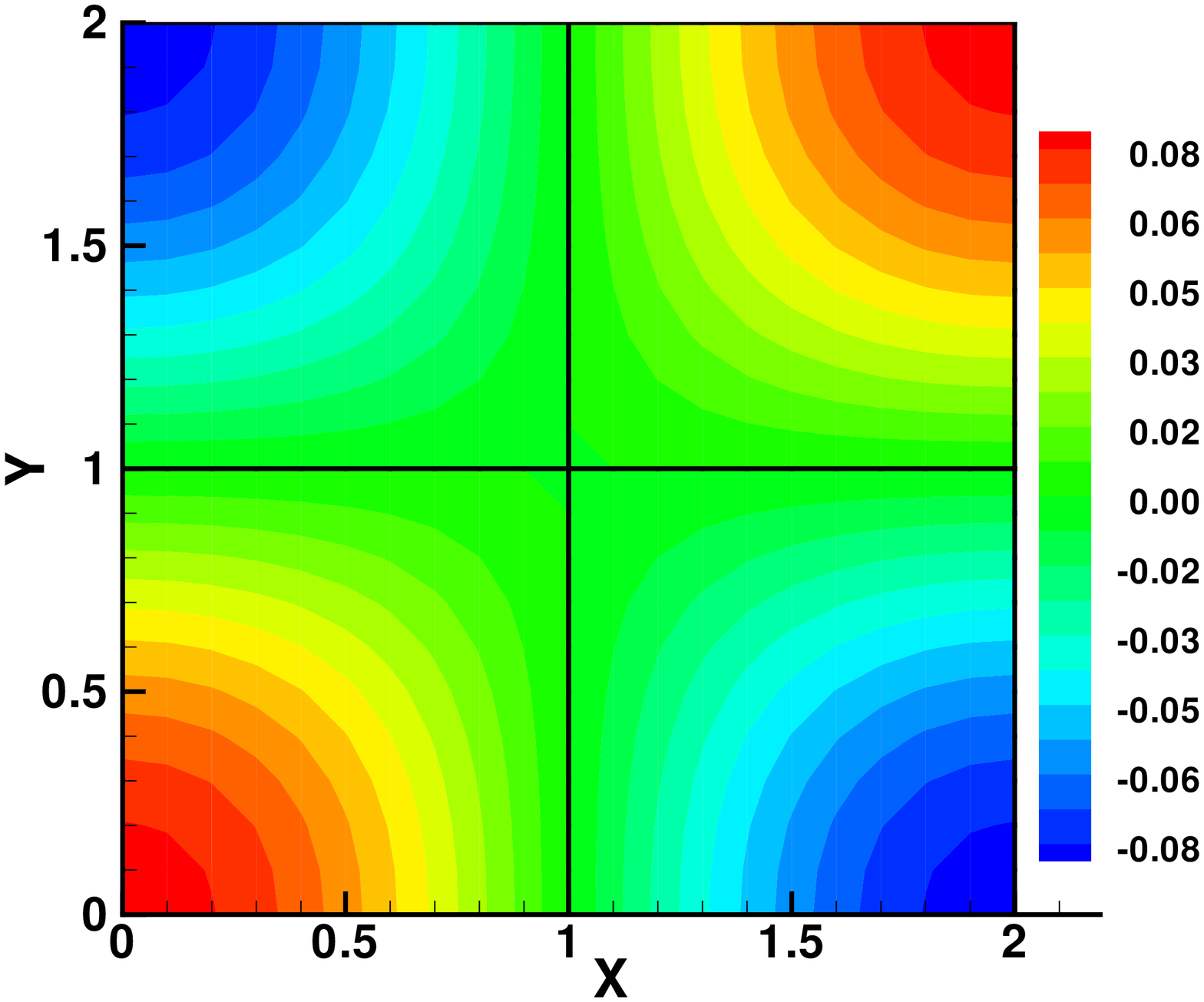}}
  \caption{$\boldsymbol{d}$-continuity: 2D test problem. The trapezoidal parameter 
    is taken as $\gamma = 3/4$, and the time step is taken as $\Delta t = 0.001 \; 
    \mathrm{s}$. The temperature profiles are shown for $t = 0.05 \; \mathrm{s}$ (top) 
    and $t = 0.5 \; \mathrm{s}$ (bottom). \label{Fig:DD_test_prob_2_d_continuity}}
\end{figure}

\begin{figure}
  \centering
  \subfigure{
    \includegraphics[scale=0.45]{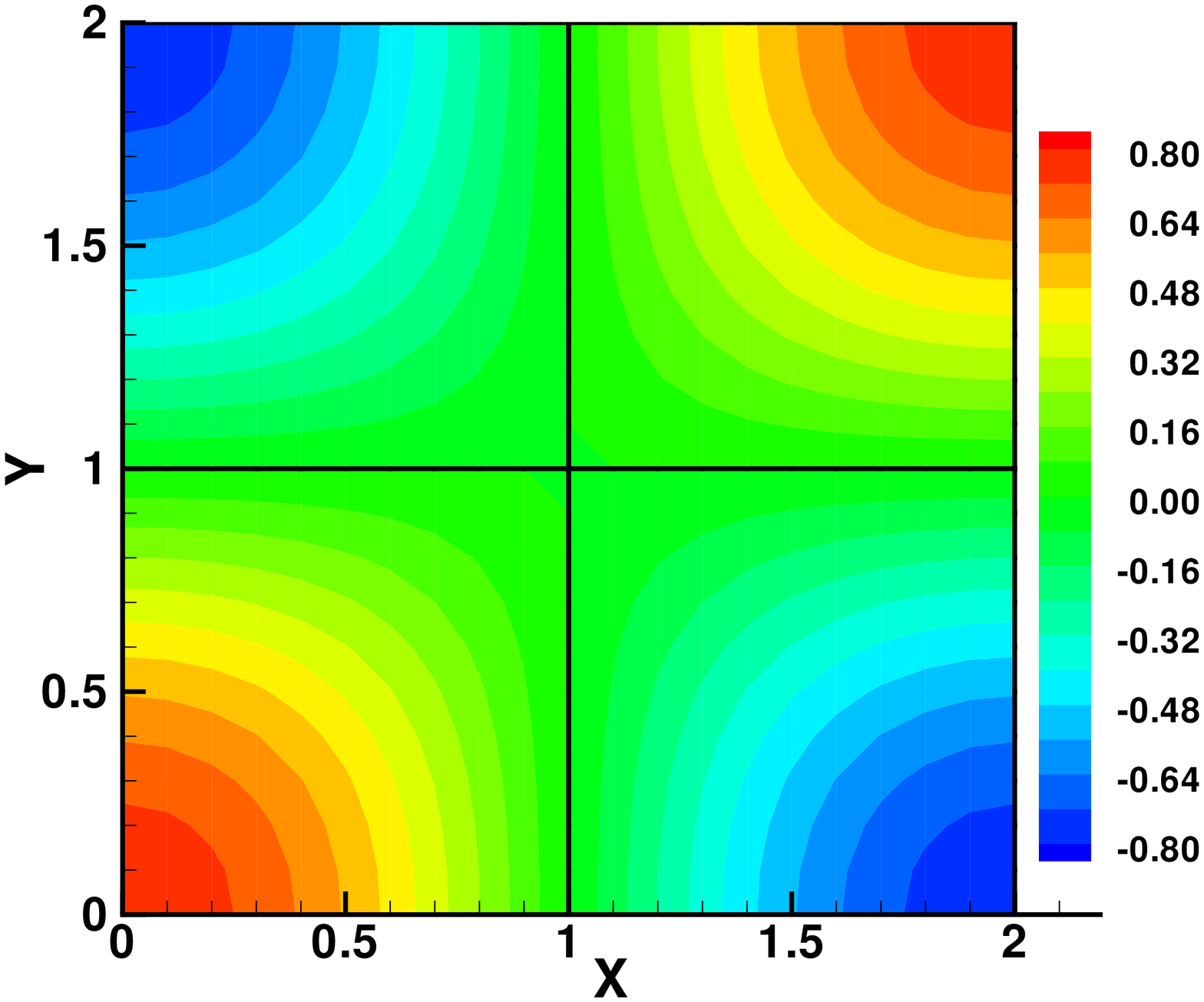}}
  \subfigure{
    \includegraphics[scale=0.45]{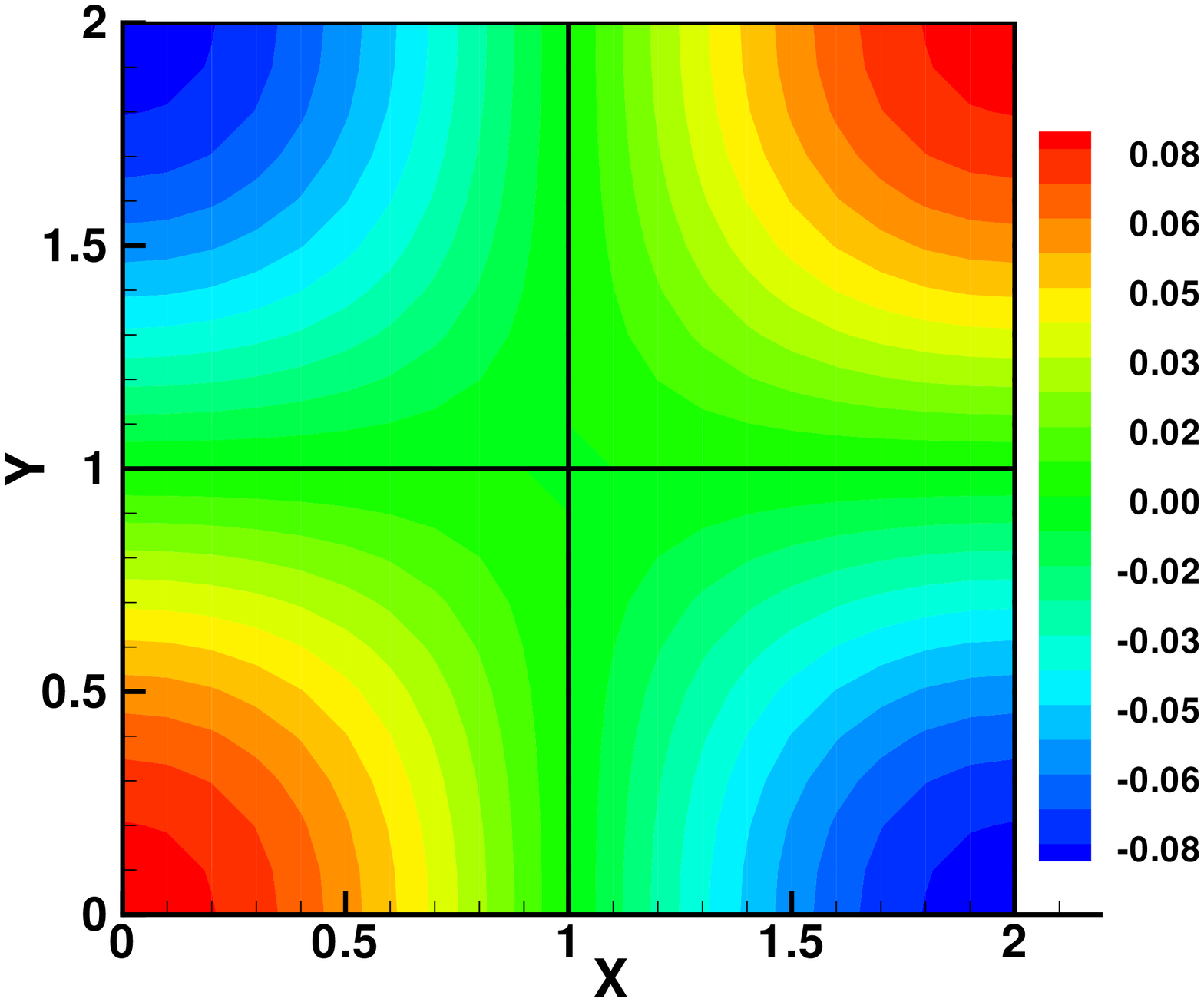}}
  \caption{Modified $\boldsymbol{d}$-continuity: 2D test problem. The trapezoidal parameter 
    is taken as $\gamma = 3/4$, and the time step is taken as $\Delta t = 0.001 \; 
    \mathrm{s}$. The temperature profiles are shown for $t = 0.05 \; \mathrm{s}$ (top) 
    and $t = 0.5 \; \mathrm{s}$ (bottom). \label{Fig:DD_test_prob_2_mod_d_continuity}}
\end{figure}

\begin{figure}
  \centering
  \subfigure{
    \centering
    \includegraphics[scale=0.3]{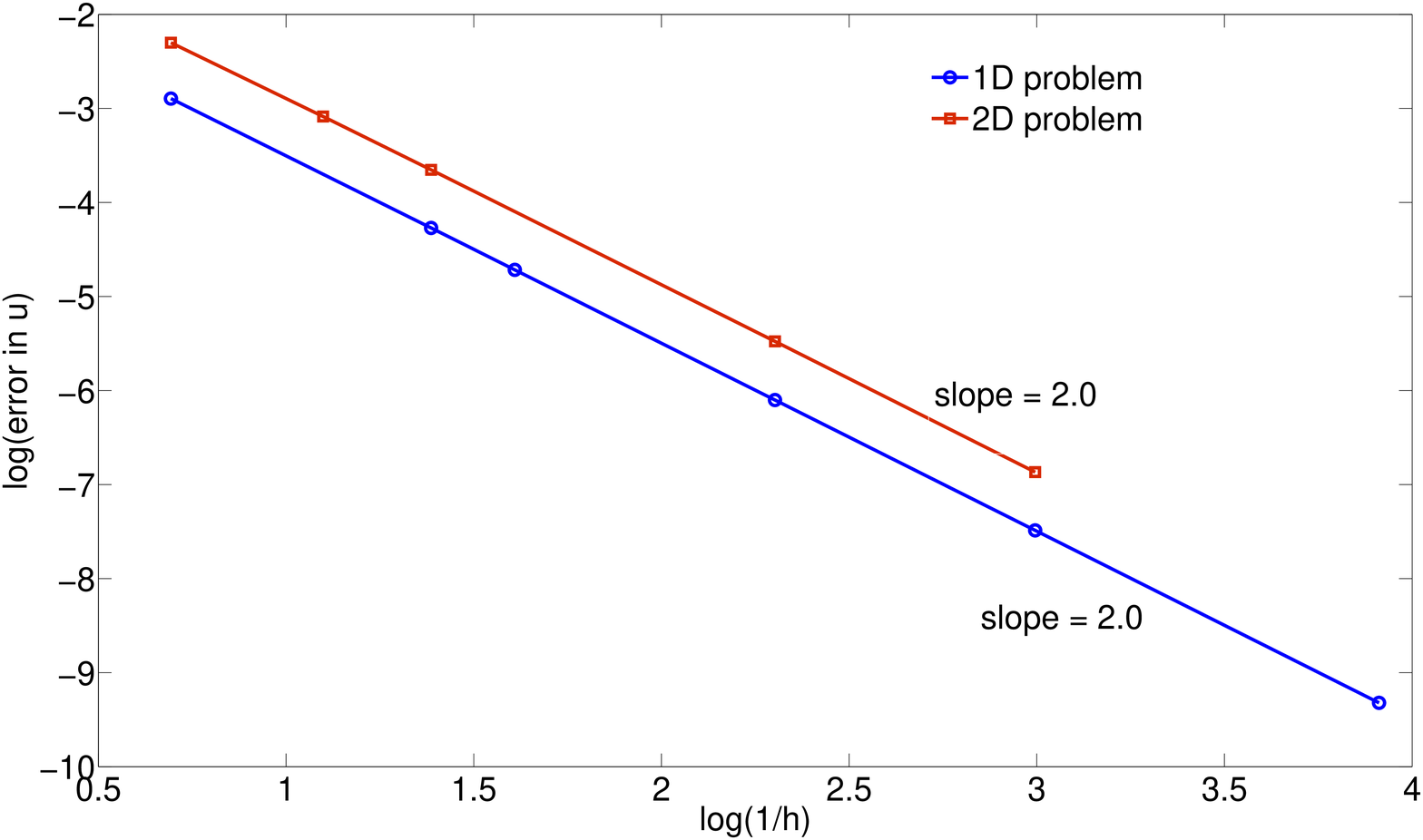}}
  \centering
  \subfigure{
    \centering
    \includegraphics[scale=0.3]{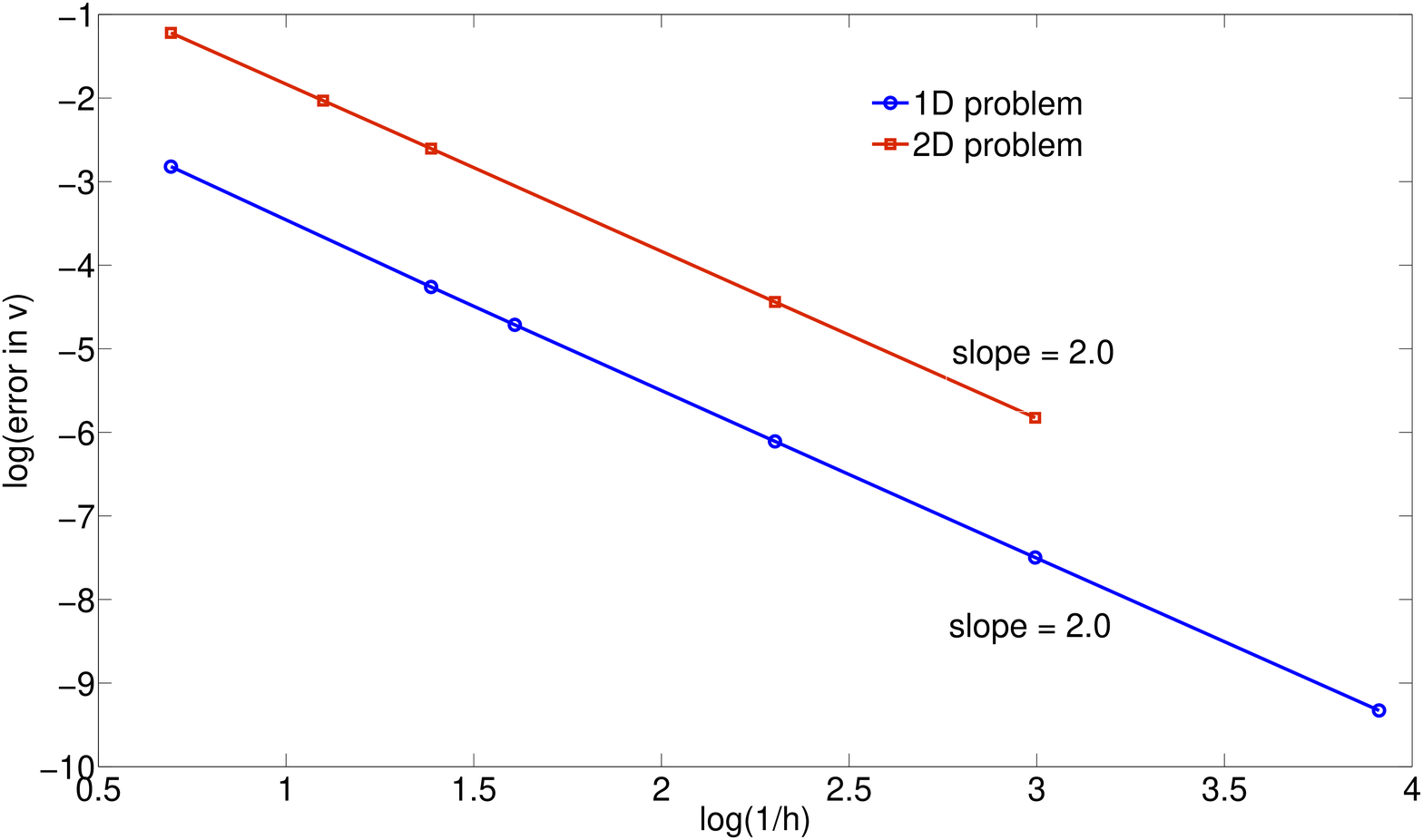}}
  \caption{$\boldsymbol{d}$-continuity method: Spatial numerical convergence analysis 
    on 1D and 2D test problems for temperature (top) and rate of temperature (bottom) 
    using $\gamma = 3/4$. The analysis is performed at the time level $t = 0.01 \; 
    \mathrm{s}$. Standard linear and bilinear elements are used for 1D and 2D problems, 
    respectively. \label{Fig:DD_u_v_spatial_convergence}}
\end{figure}

\begin{figure}
  \centering
  \subfigure{
    \centering
    \includegraphics[scale=0.3]{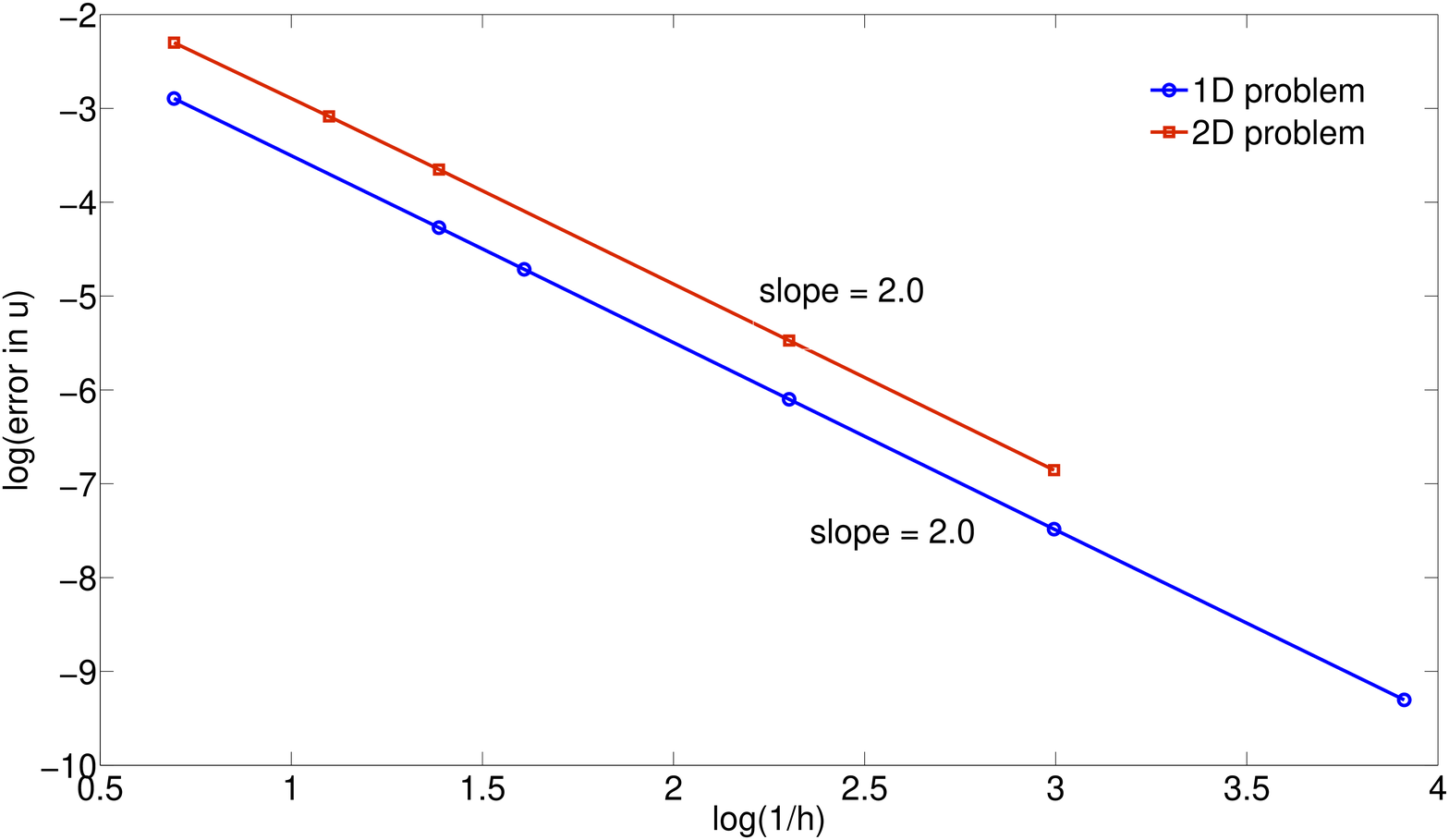}}
  \centering
  \subfigure{
    \centering
    \includegraphics[scale=0.3]{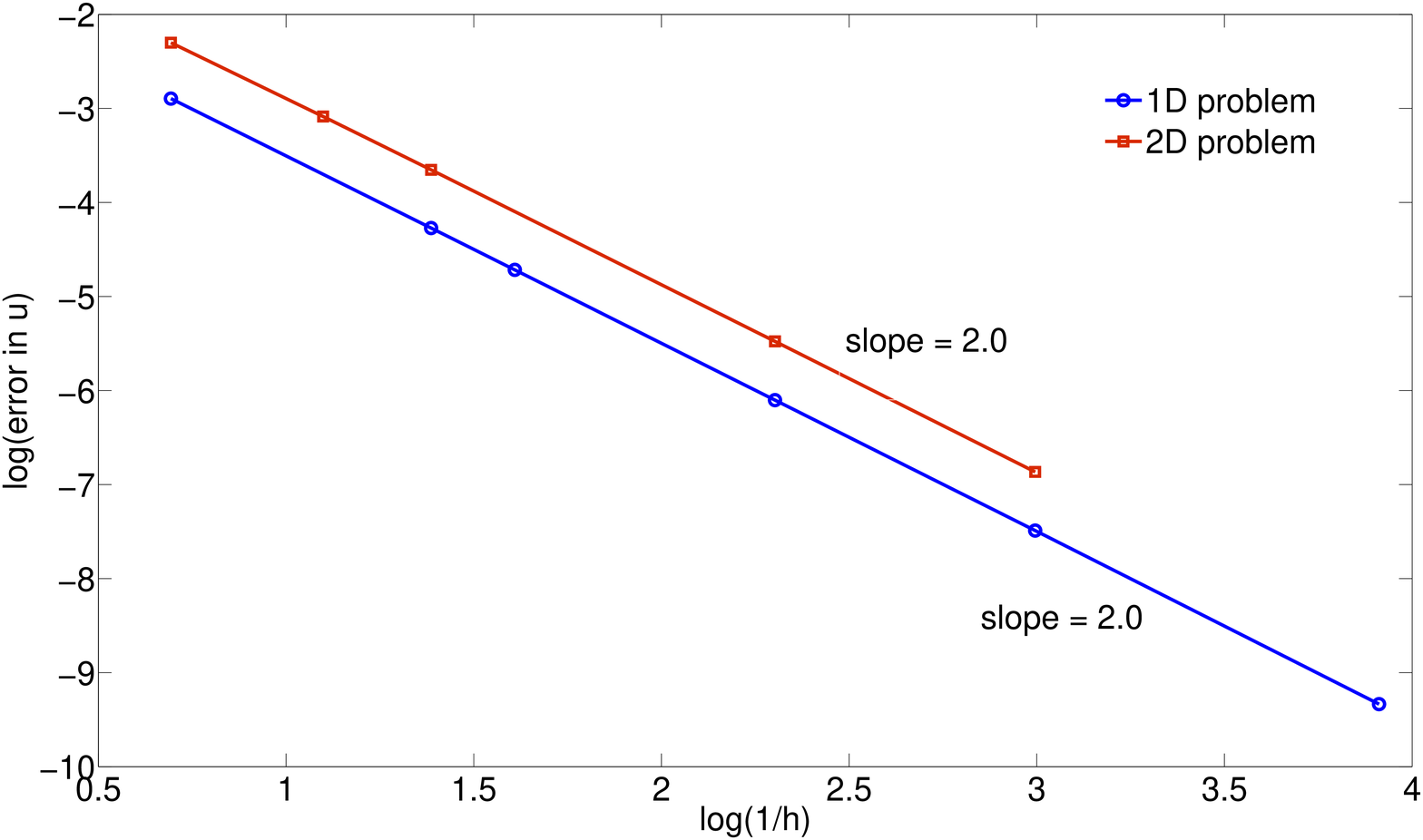}}
  \caption{Modified $\boldsymbol{d}$-continuity method: Spatial numerical convergence analysis 
    on 1D and 2D test problems for temperature using $\gamma = 1/4$ (top) and $\gamma = 3/4$ 
    (bottom). The analysis is performed at the time level $t = 0.01 \; \mathrm{s}$. Standard 
    linear and bilinear elements are used for 1D and 2D problems, respectively. 
    \label{Fig:DD_modified_d_continuity_spatial_convergence}}
\end{figure}


\begin{figure}
\centering
\subfigure{
\centering
\includegraphics[scale=0.25]{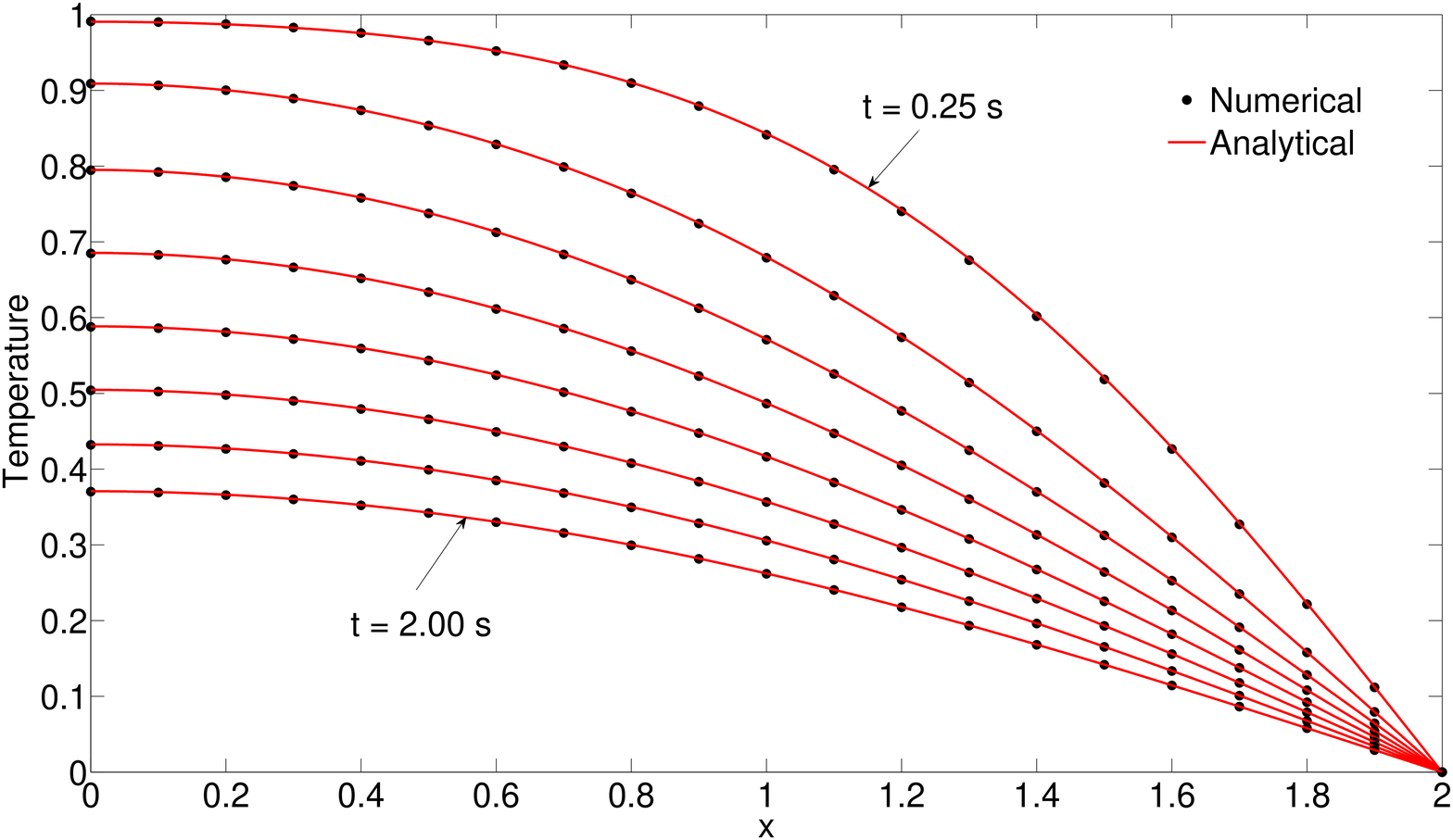}}
\subfigure{
\centering
\includegraphics[scale=0.25]{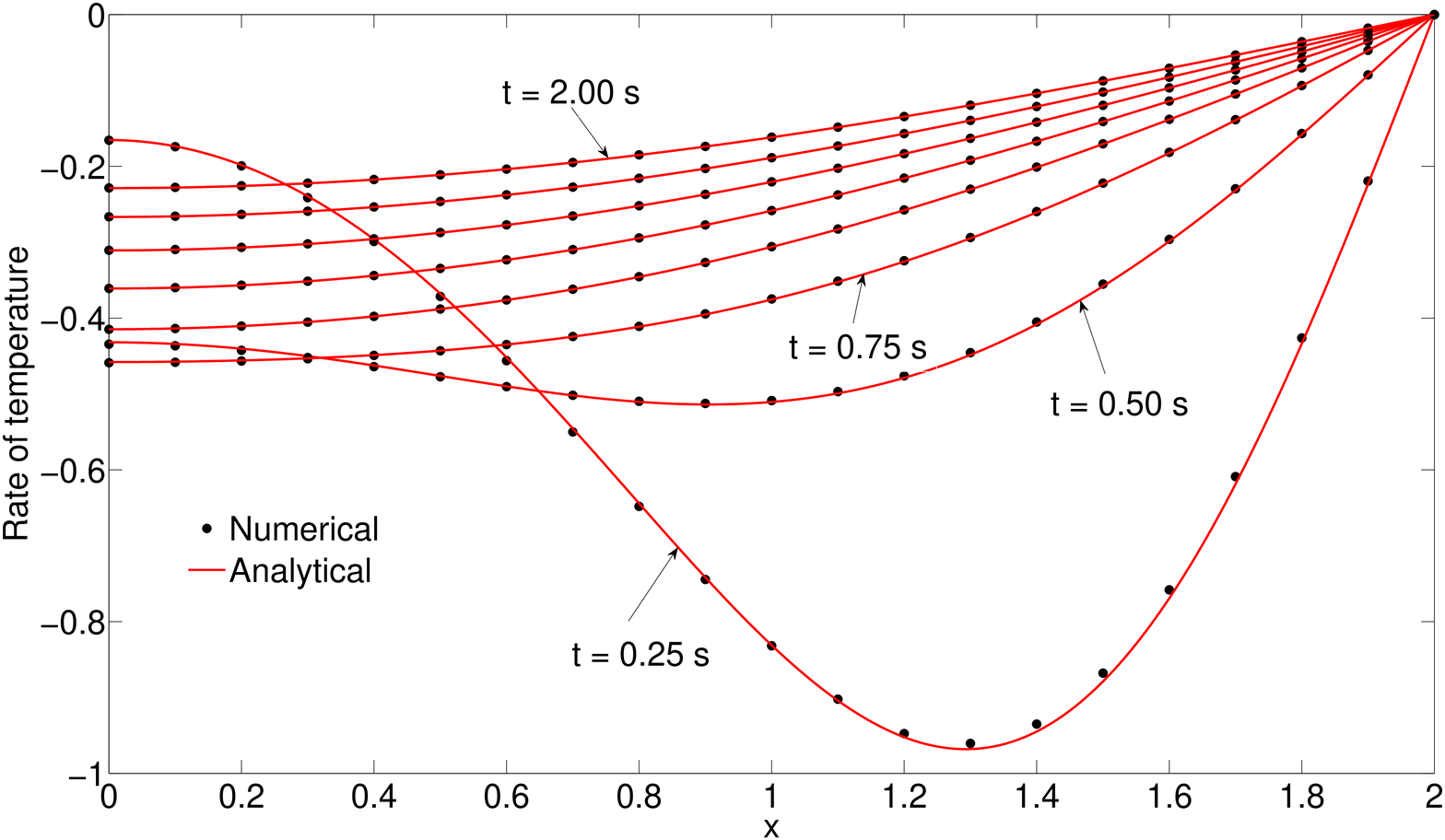}}
\caption{Baumgarte stabilized DD method. The Baumgarte parameter is taken as $\alpha = 1$, 
trapezoidal parameter as $\gamma = 0.1$, and the time step as $0.001 \; \mathrm{s}$. Each 
subdomain is divided into $10$ equal elements. The obtained numerical solutions for temperature 
(top) and rate of temperature (bottom) are plotted against $x$ at time levels $0.25 \; \mathrm{s}$ 
to $2.00 \; \mathrm{s}$ with time increments of $0.25 \; \mathrm{s}$.
\label{Fig:DD_Baumgarte_temp_stable_alpha}}
\end{figure}

\begin{figure}
\centering
  \subfigure{
\centering
    \includegraphics[scale=0.25]{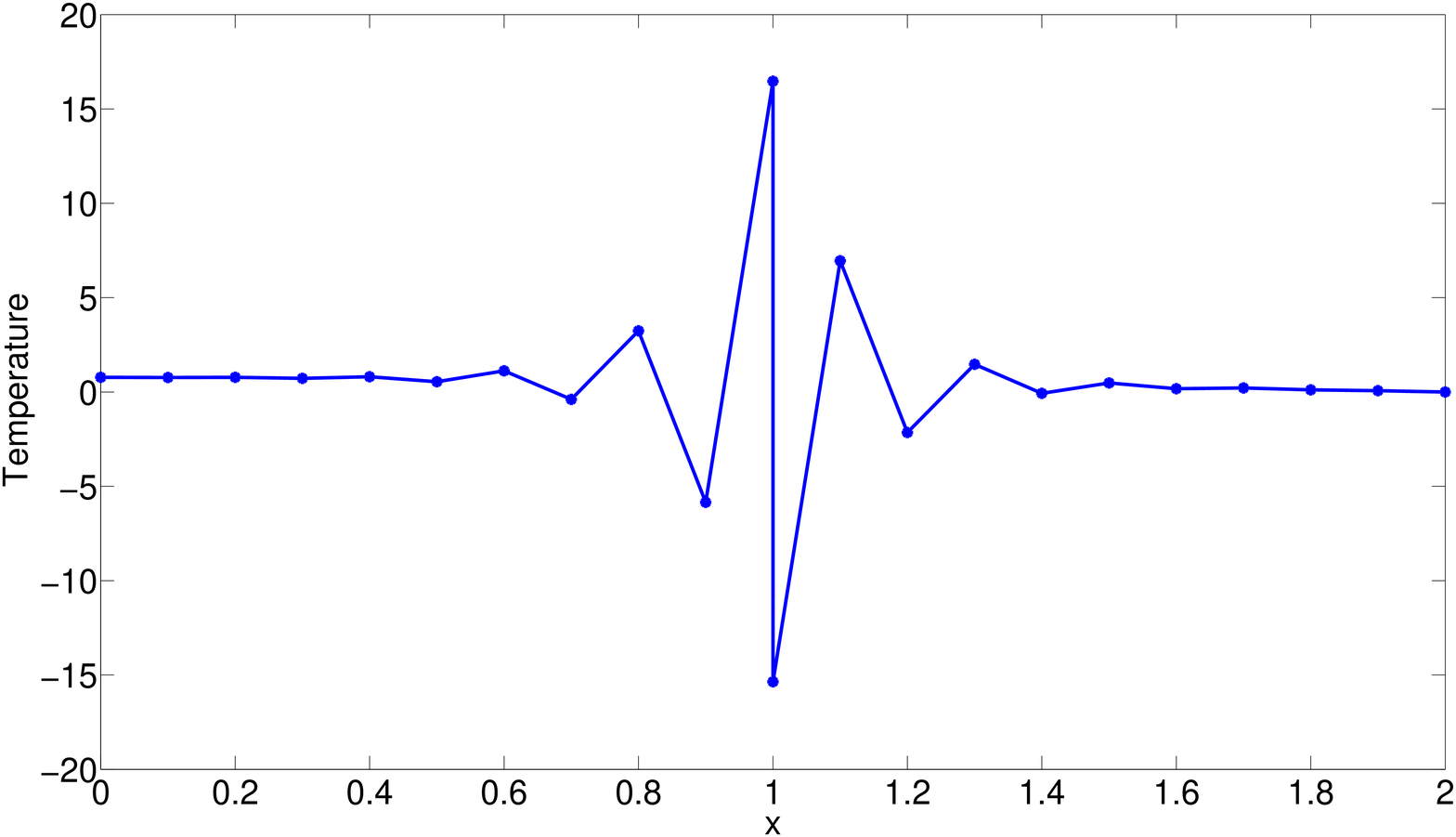}}
  \subfigure{
\centering
    \includegraphics[scale=0.25]{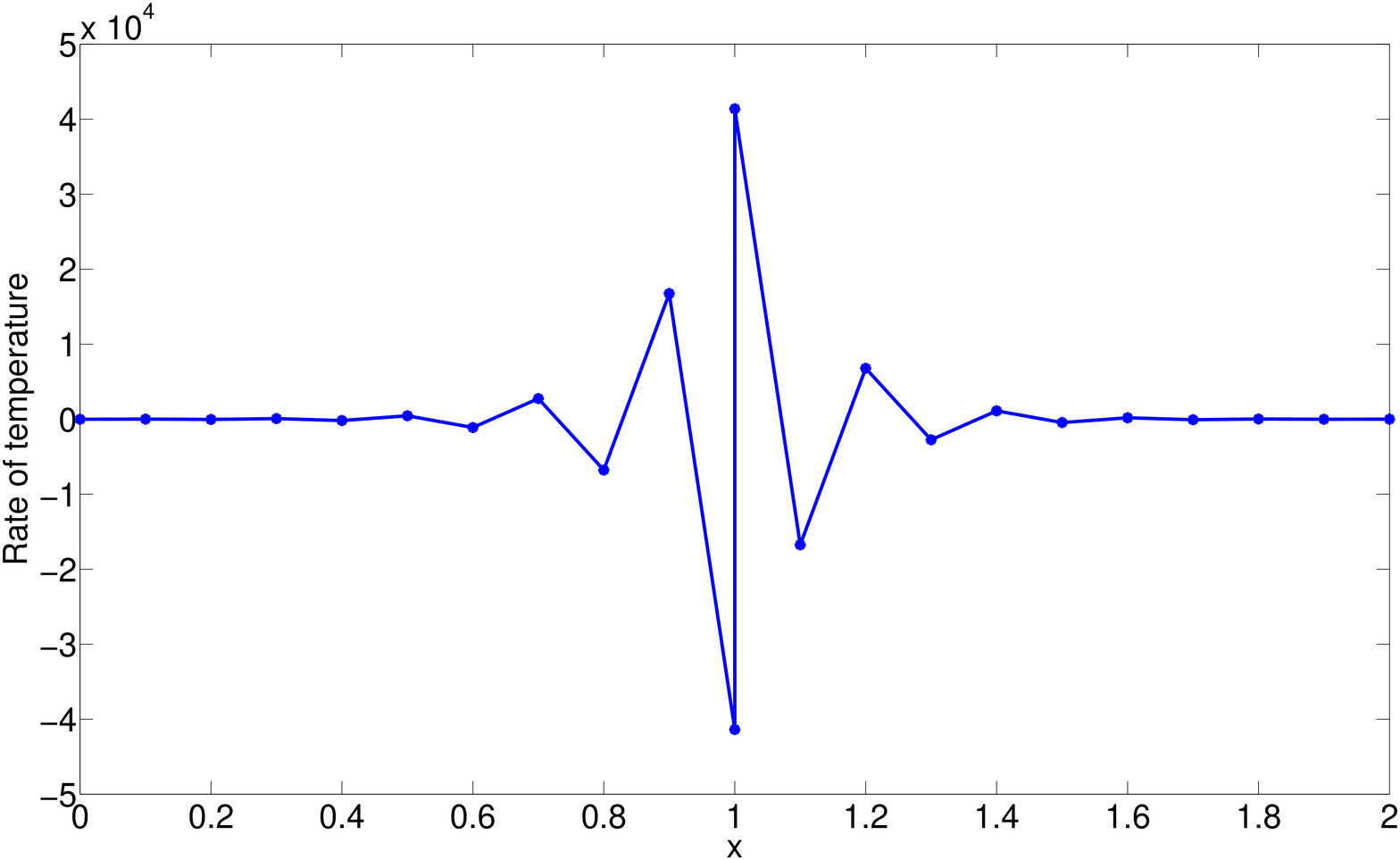}}
  \caption{Baumgarte stabilized DD method. The Baumgarte parameter is taken as $\alpha = 2.6$, 
    trapezoidal parameter as $\gamma = 0.1$, and the time step as $0.001 \; \mathrm{s}$. Each subdomain 
    is divided into $10$ equal elements. The obtained numerical solutions at $t = 0.8 \; \mathrm{s}$ 
    for temperature (top) and rate of temperature (bottom) are plotted against $x$. 
    \label{Fig:DD_Baumgarte_temp_unstable_alpha}}
\end{figure}

\begin{figure}
  \centering
  \subfigure{
    \centering
    \includegraphics[scale=0.25]{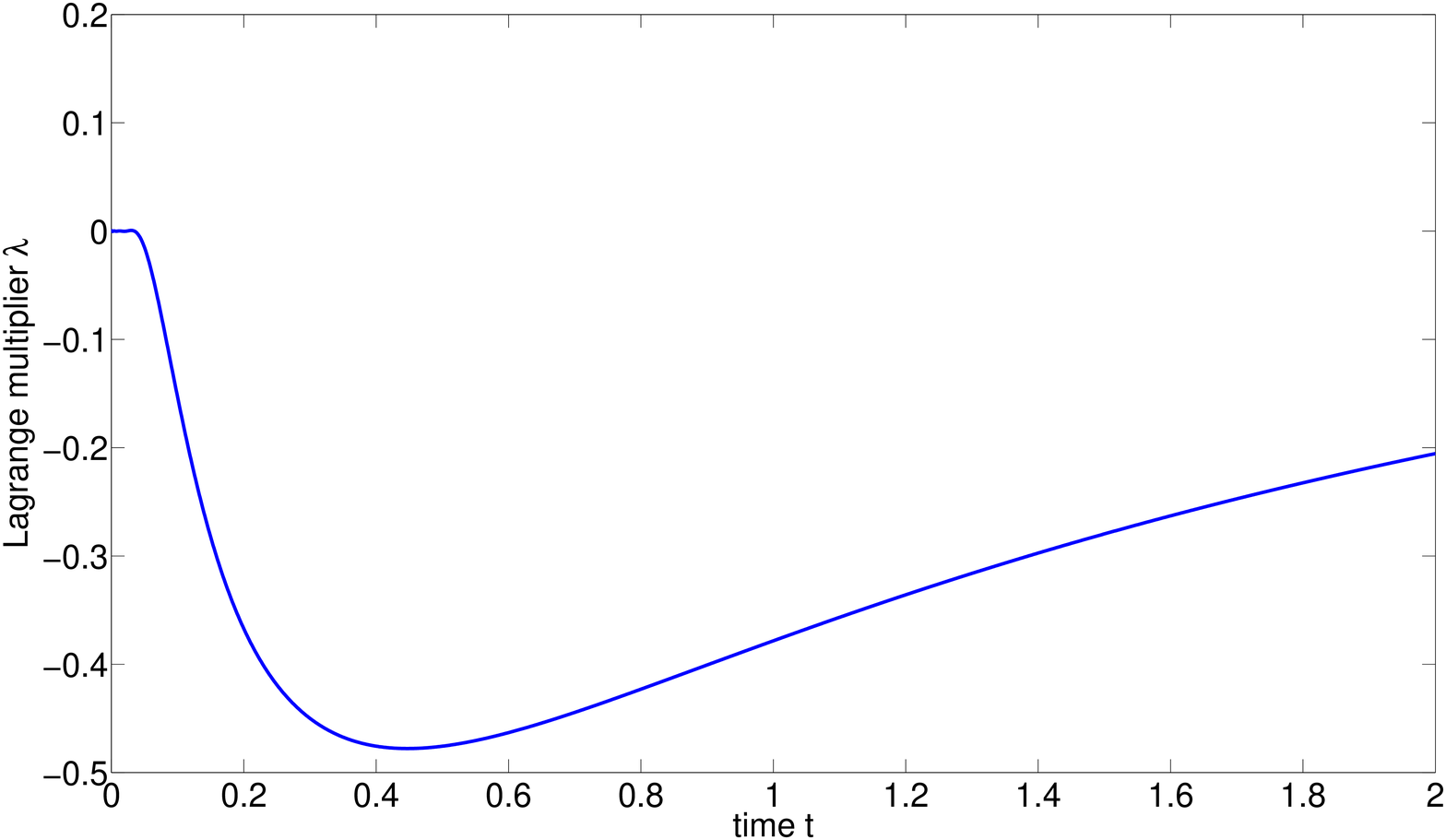}}
  \subfigure{
    \centering
    \includegraphics[scale=0.25]{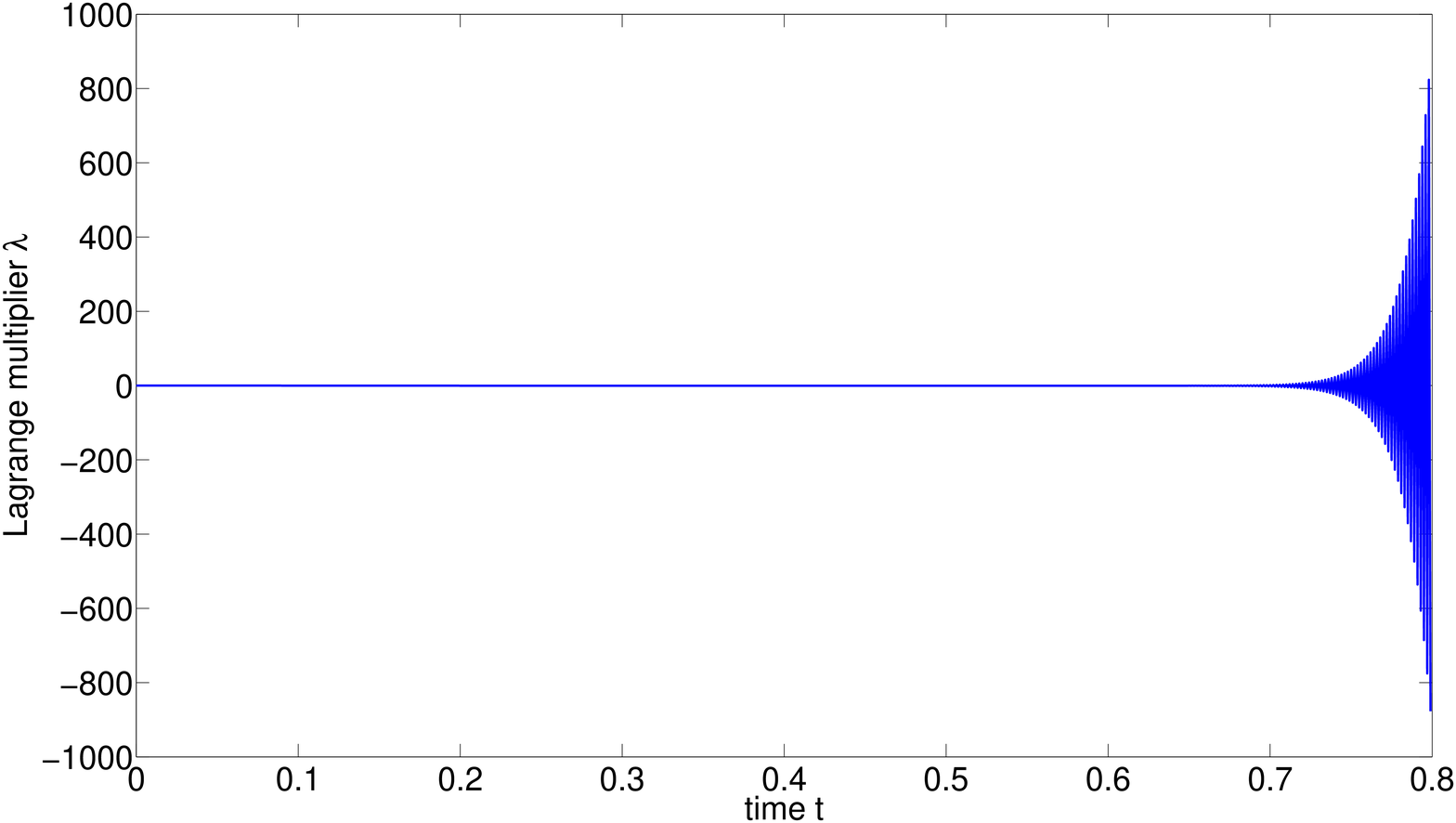}}
  \caption{Baumgarte stabilized DD method. The trapezoidal parameter as $\gamma = 0.1$, the time step as 
    $0.001 \; \mathrm{s}$, and each subdomain is divided into $10$ equal elements. In this figure we have 
    plotted the interface Lagrange multiplier against time for the Baumgarte parameters $\alpha = 1$ (top) 
    and $\alpha = 2.6$ (bottom). \label{Fig:DD_Baumgarte_lambda_unstable_alpha}}
\end{figure}

\begin{figure}
  \centering
  \subfigure{
    \includegraphics[scale=0.25]{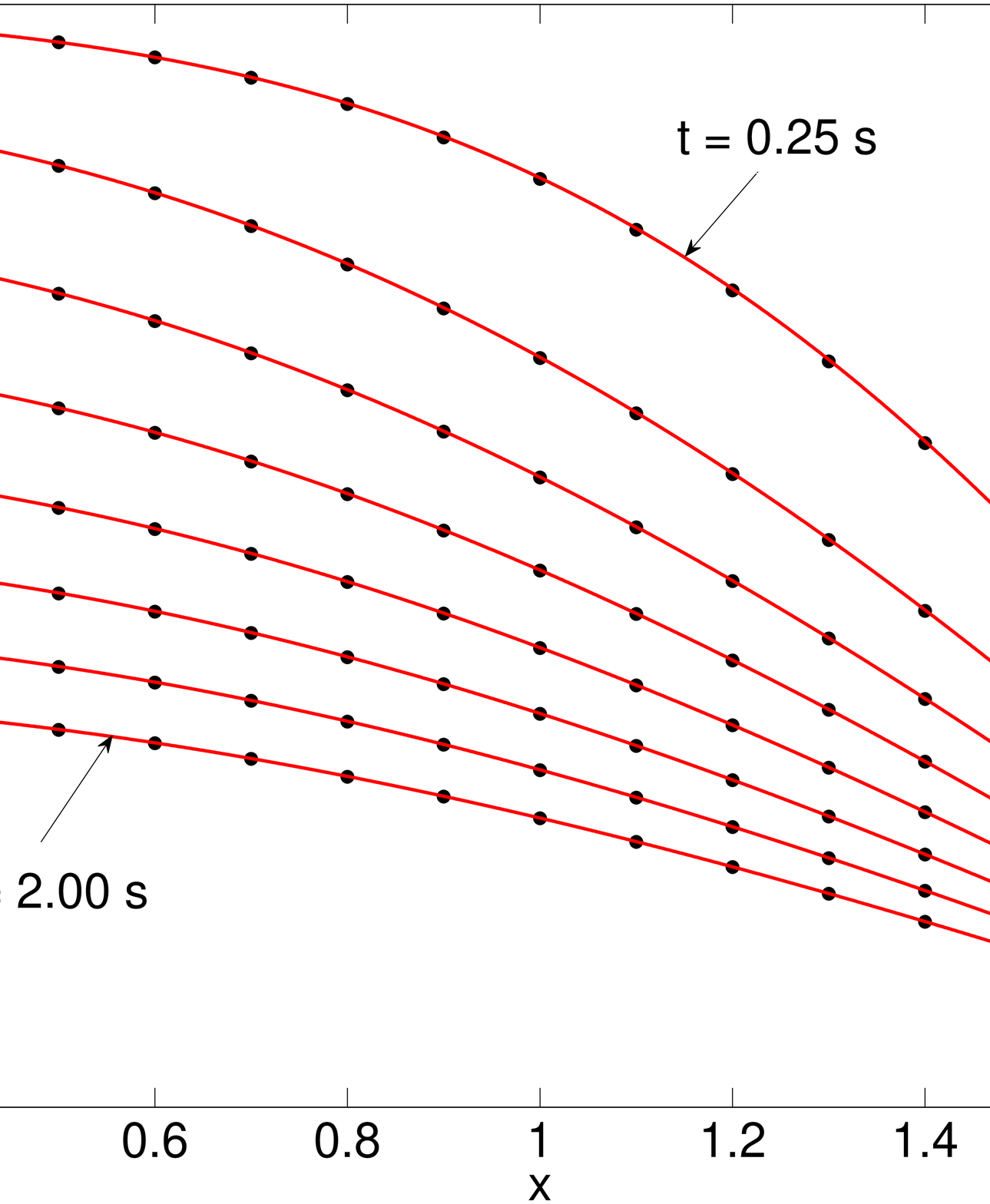}}
  \subfigure{
    \includegraphics[scale=0.25]{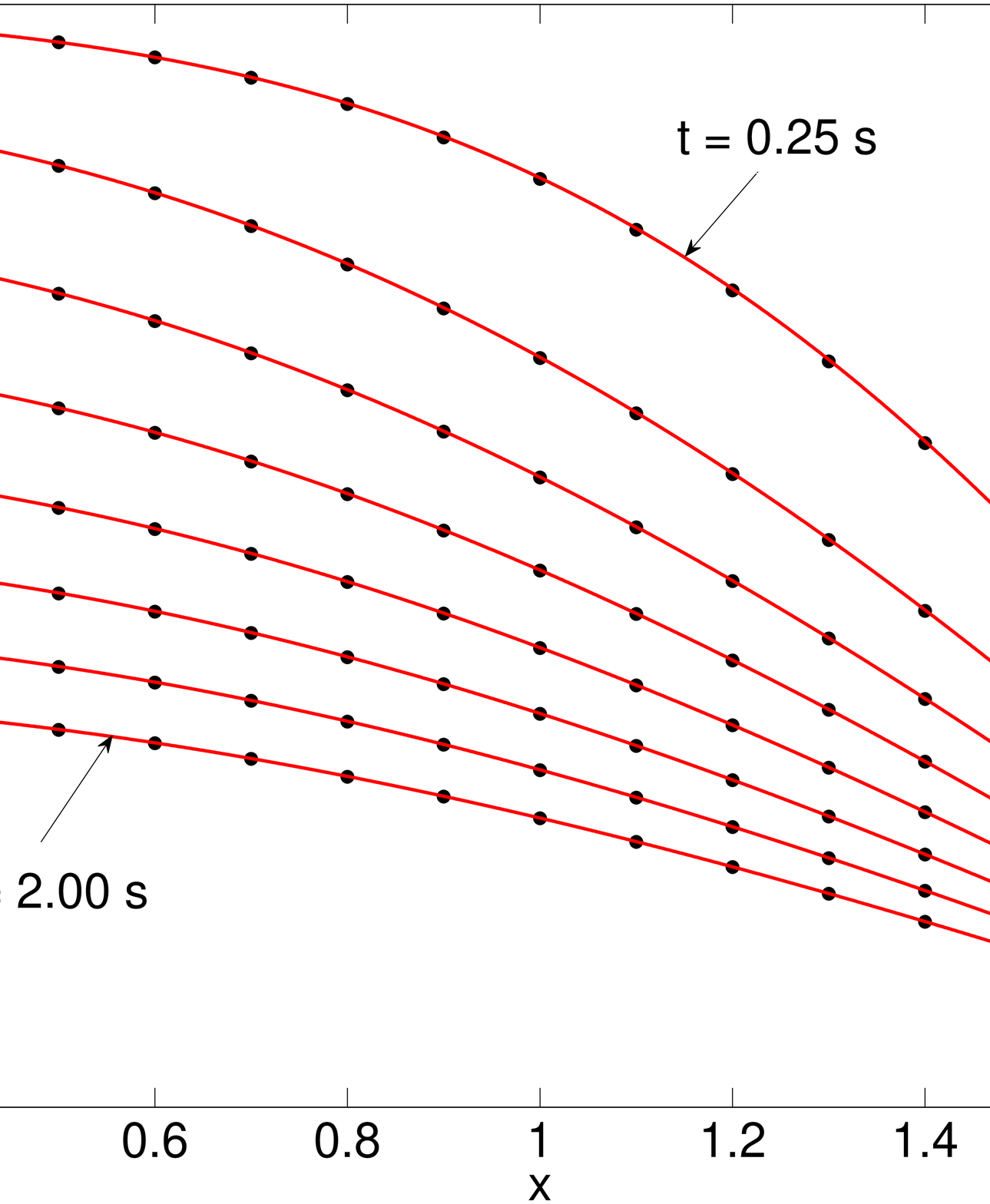}}
  \caption{Baumgarte stabilized DD method. The trapezoidal parameter is taken as $\gamma = 1/2$, 
    and the time step as $0.001 \; \mathrm{s}$. Each subdomain is divided into $10$ equal elements. 
    The temperature profiles for $\alpha = 2.6$ (top) and $\alpha = 10$ (bottom) are plotted against 
    $x$ at time levels $0.25 \; \mathrm{s}$ to $2.00 \; \mathrm{s}$ with time increments of $0.25 \; 
    \mathrm{s}$. \label{Fig:DD_Baumgarte_temp_gamma_dot5}}
\end{figure}
\end{document}